\newtheorem{lem}{Lemma}[section]
\newtheorem{prop}{Proposition}[section]
\newtheorem{thm}{Theorem}
\newtheorem*{assum}{Assumption (A)}
\theoremstyle{definition}
\theoremstyle{remark}
\theoremstyle{remark}
\newtheorem*{remarks*}{Remarks}
\newtheorem*{remark*}{Remark}
\numberwithin{equation}{section}
\newcommand{\C}{\mathbbm{C}}
\newcommand{\N}{\mathbbm{N}}
\newcommand{\R}{\mathbbm{R}}
\newcommand{\pt}{\partial}
\newcommand{\hD}{(-\Delta)^{1/2}}
\newcommand{\be}{\begin{equation}}
\newcommand{\ee}{\end{equation}}
\newcommand{\eps}{\epsilon}
\newcommand{\KK}{\mathcal{K}}
\newcommand{\ii}{\mathrm{i}}
\renewcommand{\rho}{\varrho}
\newcommand{\Le}{L^2_{\mathrm{even}}}
\newcommand{\Hil}{\mathsf{H}}
\def\section{\@startsection{section}{1}%
  \z@{1.5\linespacing\@plus\linespacing}{.5\linespacing}%
  {\normalfont\bfseries\large\centering}}
\def\@cite#1#2{[\textbf{#1\if@tempswa , #2\fi}]}
\def\@biblabel#1{[\textbf{#1}]}
\begin{document}
\title[Uniqueness for Nonlocal Liouville Equation]{Uniqueness for the Nonlocal \\ Liouville Equation in $\R$}

\author{Maria Ahrend}
\address{M. Ahrend, Universit\"at Basel, Departement Mathematik und Informatik, Spiegelgasse 1, CH-4051 Basel, Switzerland}%
\email{maria.ahrend@unibas.ch}

\author{Enno Lenzmann}
\address{E. Lenzmann, Universit\"at Basel, Departement Mathematik und Informatik, Spiegelgasse 1, CH-4051 Basel, Switzerland}%
\email{enno.lenzmann@unibas.ch}

\maketitle

%{\centering\footnotesize {\em Dedicated to Patrick G\'erard on the occasion of his 60th birthday.} \par}

\begin{abstract}
We prove uniqueness of solutions for the nonlocal Liouville equation
$$
(-\Delta)^{1/2} w = K e^w \quad \mbox{in $\R$}
$$
with finite total $Q$-curvature $\int_{\R} K e^w \, dx< +\infty$. Here  the prescribed $Q$-curvature function $K=K(|x|) > 0$ is assumed to be a positive, symmetric-decreasing function satisfying suitable regularity and decay bounds. In particular, we obtain uniqueness of solutions in the Gaussian case with $K(x) = \exp(-x^2)$.

Our uniqueness proof exploits a connection of the nonlocal Liouville equation to ground state solitons for Calogero--Moser derivative NLS, which is a completely integrable PDE recently studied by P.~G\'erard and the second author.  \end{abstract}

\section{Introduction and Main Results}

\label{sec:intro}

In this paper, we consider the following nonlocal Liouville equation 
\be \label{eq:Liouville}
\hD w = K e^w \quad \mbox{in $\R$},
\ee
subject to the finiteness condition that 
\be \label{eq:Lambda}
\Lambda := \int_{\R} K(x) e^{w(x)} \, dx < +\infty.
\ee
Here $K : \R \to \R$ denotes a given function, which geometrically can be seen as a  \textbf{prescribed Q-curvature problem} in one space dimension. More precisely, if $w$ is a solution of \eqref{eq:Liouville} then $g=e^{2w} |dx|^2$ is a metric on $\R$ that is conformal to the standard metric $g_0=|dx|^2$ on $\R$ having constant $Q$-curvature equal to $1$. The quantity $\Lambda$ then corresponds to the  total $Q$-curvature of the metric $g$ on $\R$. We note that, by means of the stereographic projection, the nonlocal Liouville equation \eqref{eq:Liouville} can also be related to a prescribed $Q$-curvature problem on the unit circle. We refer to \cite{DaMa-17,DaMaRi-15} for more details on the geometric background on \eqref{eq:Liouville} and its relation to the generalized Riemann mapping theorem in the complex plane $\C$.

In fact, existence and non-existence results of prescribed $Q$-curvatures problems in $\R^n$ for general dimensions $n \geq 1$ have recently attracted a great deal of attention, leading to the class of Liouville type equations given by
\be  \label{eq:Liouville_general}
(-\Delta)^{n/2} w = K e^{n w} \quad \mbox{in $\R^n$}. 
\ee
In the case of $n=2$ space dimensions, equation \eqref{eq:Liouville} then becomes the well-known Liouville equation which is a central object in nonlinear elliptic PDEs and geometric analysis; see \cite{BrMe-91, ChYa-97, ChKi-95, Li-98, LiSh-94, Xu-05}. %The important special case with constant $K \equiv \mbox{const}.$ Mention the literature.

From an analytic point of view, a particularly challenging situation for equation \eqref{eq:Liouville_general} arises in odd space dimensions $n \in \{ 1,3,5,\ldots \}$ due to the nonlocal nature of the pseudo-differential operator $(-\Delta)^{n/2}$. Apart from the important special case of constant  positive constant $K>0$, where solutions $w$ are known in closed form (see \eqref{eq:w_explicit} below), the question of uniqueness of solutions $w$ have been entirely out of scope so far in odd dimensions. In the present paper, we address the case of $n=1$ space dimension. In fact, our analysis is strongly inspired by a surprisingly close connection to solitons of continuum limits of completely integrable {\em Calogero-Moser systems}; see below for more details on this.

Before we state our main results on \eqref{eq:Liouville}, we introduce some basic notions as follows. Throughout this paper, we assume that the solutions $w : \R \to \R$ belong to the space
$$
L_{1/2}(\R) := \Big \{ f \in L^1_{\mathrm{loc}}(\R) : \int_\R \frac{|f(x)|}{1+x^2} \, dx < +\infty \Big \},
$$
which is the natural space to define distributional solutions of \eqref{eq:Liouville}; see, e.\,g., \cite{HyMaMa-21} for details. We remark that we always deal with real-valued functions throughout this paper.

In order to state the main results, we will impose the following conditions on the $Q$-curvature function $K$, where we employ the commonly used short-hand notation $\langle x \rangle = \sqrt{1+x^2}$.

\begin{assum}
We assume that $K : \R \to \R$ has the following properties.
\begin{enumerate}
\item[(i)] $K$ is strictly positive, even and monotone decreasing in $|x|$.
\item[(ii)] $K$ is continuously differentiable and satisfies the pointwise bound
$$
\sqrt{K}(x) + |x \pt_x \sqrt{K}(x)| \leq C \langle x \rangle^{-1/2 - \delta} 
$$
for all $x \in \R$ with some constants $C >0$ and $\delta > 0$.
\end{enumerate}
\end{assum}

Important examples for admissible $Q$-curvatures are the Gaussian function $K(x) = e^{-x^2}$ and $K(x) = \langle x \rangle^{-1-2\delta}$ for some $\delta>0$. Further below, we will see that imposing regularity and decay conditions on the square root $\sqrt{K}$ of the $Q$-curvature function becomes natural due to our approach that is based on a connection to solitons for the Calogero-Moser derivative NLS discussed below.

The first main result of this paper is now as follows.
\begin{thm} \label{thm:main}
Suppose $K$ satisfies Assumption {\em \textbf{(A)}} and let $w \in L_{1/2}(\R)$ be a solution of \eqref{eq:Liouville} satisfying \eqref{eq:Lambda}. Then the following properties hold.
\begin{enumerate}
\item[(i)] \textbf{Regularity and Total $Q$-Curvature Bound:} We have $w \in C^{1,1/2}_{\mathrm{loc}}(\R)$ and the total $Q$-curvature $\Lambda = \int_\R K e^w \,dx$ satisfies
$0 < \Lambda < 2 \pi$.
\item[(ii)] \textbf{Symmetry and Monotonicity:}  $w$ is even and decreasing in $|x|$, i.\,e., it holds $w(-x) = w(x)$ for all $x \in \R$ and $w(x) \geq w(y)$ whenever $|x| \leq |y|$.
\item[(iii)] \textbf{Existence:} For every $w_0 \in \R$, there exists a solution $w \in L_{1/2}(\R)$ of \eqref{eq:Liouville} with $w(0)=w_0$ such that $\eqref{eq:Lambda}$ holds.
\end{enumerate}
\end{thm}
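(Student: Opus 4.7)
For part (i), the plan is first to derive the integral representation
\[ w(x) = -\frac{1}{\pi} \int_\R \log|x-y| \, K(y) e^{w(y)} \, dy + c_0 \]
for some constant $c_0 \in \R$, using the fact that $G(x) = -(1/\pi)\log|x|$ is the fundamental solution of $\hD$ on $\R$. Any $L_{1/2}$-solution of \eqref{eq:Liouville} differs from this convolution by an $\hD$-harmonic function of at most logarithmic growth, which must be constant. This representation immediately gives the far-field asymptotics $w(x) = -(\Lambda/\pi)\log|x| + O(1)$ as $|x|\to\infty$. Regularity then bootstraps from $K e^w \in L^1 \cap L^\infty_{\mathrm{loc}}$: iterating Schauder-type estimates for $\hD$, together with the $C^1$-regularity and decay of $\sqrt{K}$, yields $w \in C^{1,1/2}_{\mathrm{loc}}(\R)$. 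Positivity $\Lambda > 0$ is immediate.

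The sharp bound $\Lambda < 2\pi$ is where the Calogero--Moser connection advertised in the abstract must enter. I would introduce
\[ \psi(x) := \sqrt{K(x)} \, e^{w(x)/2}, \]
so that $\|\psi\|_{L^2}^2 = \Lambda$ and $\psi > 0$. A direct calculation, using the product rule for $\hD$ together with the assumed $C^1$-decay of $\sqrt{K}$, should rewrite \eqref{eq:Liouville} as a stationary profile equation for $\psi$ matching the ground-state equation for the Calogero--Moser derivative NLS studied by G\'erard and the second author. The a priori $L^2$-mass bound for such ground states then translates into the strict inequality $\Lambda < 2\pi$.

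For part (ii), the natural route is the method of moving planes applied directly to the integral representation above: since $K$ is even and radially decreasing, reflection of $w$ about a plane $\{x = \lambda\}$ combined with the strict monotonicity of $-\log|x-y|$ in $|x-y|$ produces a sign-definite comparison whose sliding argument forces $\lambda \to 0$ and yields evenness. Strict monotonicity in $|x|$ then follows from a Hopf-type lemma for $\hD$ together with the strict radial decrease of $K$. For part (iii), the $\psi$-dictionary should run in reverse: given $w_0 \in \R$, one takes the Calogero--Moser ground state $\psi$ normalized so that $\psi(0)^2 = K(0)\, e^{w_0}$, and defines $w := 2\log \psi - \log K$. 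Membership $w \in L_{1/2}(\R)$ and \eqref{eq:Lambda} then follow from the known decay of CM solitons together with the hypotheses on $K$.

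The main obstacle is plainly the bound $\Lambda < 2\pi$ in (i) together with the existence assertion (iii): both hinge on rigorously setting up a precise and bijective correspondence between solutions of \eqref{eq:Liouville} and an appropriate family of ground-state solitons of the Calogero--Moser derivative NLS. A direct Pohozaev-type computation does not obviously deliver the sharp constant $2\pi$, so the CM-DNLS machinery appears essential. Notably, the fact that Assumption (A) regulates $\sqrt{K}$ rather than $K$ itself is evidently calibrated exactly to make the substitution $\psi = \sqrt{K}\, e^{w/2}$ into the CM framework viable.
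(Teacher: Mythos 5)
Your treatment of the integral representation, the asymptotics $w(x)=-(\Lambda/\pi)\log|x|+O(1)$, the regularity bootstrap for (i), and the moving-plane argument on the integral equation for (ii) all match the paper's route. But your assessment of where the difficulty lies is inverted, and this creates two genuine gaps. First, you assert that ``a direct Pohozaev-type computation does not obviously deliver the sharp constant $2\pi$'' and propose instead to import an $L^2$-mass bound for Calogero--Moser ground states. In fact the paper proves $\Lambda<2\pi$ by exactly a Pohozaev identity: differentiating the integral representation, multiplying by $xK(x)e^{w(x)}$ and integrating, the symmetrization $\frac{x}{x-y}=\frac12+\frac12\frac{x+y}{x-y}$ yields
\begin{equation*}
\frac{\Lambda}{2\pi}\left(\Lambda-2\pi\right)=\int_{\R}\big(x\,\pt_x K\big)\,e^{w}\,dx<0,
\end{equation*}
the strict sign coming from $K$ being non-constant and decreasing in $|x|$ (Assumption (A)(ii) guarantees the boundary terms vanish and the integrand is in $L^1$). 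This is sharp: for constant $K$ one has $\Lambda=2\pi$ exactly. Your alternative is not available off the shelf: the mass bound for CM solitons with a general external potential $W=-\pt_x\log\sqrt K$ is not in the cited literature (only $V\equiv0$ and $V=x^2$ are treated there), and when one does prove it, it is by the very same virial/Pohozaev identity -- so your route is either circular or defers the key computation.

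The second gap is in (iii). Defining $w:=2\log\psi-\log K$ from ``the Calogero--Moser ground state $\psi$ normalized so that $\psi(0)^2=K(0)e^{w_0}$'' presupposes that for every prescribed value $\psi(0)>0$ there exists a ground state of the profile equation with the general potential coming from $K$ -- but this existence, for a full one-parameter family indexed by the value at the origin, is precisely what has to be proven and cannot be cited. The paper establishes it by a Schauder fixed-point argument: it recasts \eqref{eq:Liouville} as a fixed-point problem $v=T_\lambda(v)$ for $v=\sqrt{Ke^w}$ on the closed convex set of symmetric-decreasing functions in a weighted $H^1$-space, proves compactness of $T_\lambda$ (via compactness of the embedding into $L^2$ furnished by the logarithmic weight), and uses the Pohozaev bound $\Lambda<2\pi$ together with the pointwise bound $0<v\le\lambda\sqrt K$ as the a-priori estimate needed for Schauder. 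So the Pohozaev identity you dismissed is not only the proof of the sharp constant in (i); it is also an input to the existence proof in (iii). Your substitution $\psi=\sqrt K\,e^{w/2}$ is the right change of variables (it is the paper's $v$), but in the paper it powers the uniqueness theorem and the existence construction, not the bound $\Lambda<2\pi$.
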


The next main result establishes uniqueness of solutions for the fractional Liouville equation \eqref{eq:Liouville}. In fact, despite the nonlocal nature of the problem, we obtain the following Cauchy--Lipschitz ODE type uniqueness result stating that the `initial value' $w(0)=w_0$ completely determines the solution $w$ in all of $\R$.  

\begin{thm}[\textbf{Uniqueness}] \label{thm:main2}
Suppose $K$ satisfies Assumption {\em \textbf{(A)}}. If $w, \widetilde{w} \in L_{\frac 1 2}(\R)$ are solutions of \eqref{eq:Liouville} satisfying \eqref{eq:Lambda}, then it holds 
$$
\widetilde{w}(0) = w(0) \quad \Rightarrow \quad \widetilde{w} \equiv w.
$$ 
\end{thm}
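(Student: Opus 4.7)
The strategy is to recast a solution $w$ of \eqref{eq:Liouville} as the modulus datum of a ground-state soliton of the Calogero--Moser derivative NLS (CM-DNLS), and then invoke the integrable classification of even CM-DNLS solitons due to G\'erard and the second author. Given a solution $w$ as in Theorem \ref{thm:main}, let $\theta$ denote the Hilbert transform of $\tfrac12(w+\log K)$; this is well-defined thanks to the regularity of $w$ from Theorem \ref{thm:main}(i) and the hypotheses on $K$ from Assumption \textbf{(A)}. I would then introduce the complex-valued function
\be \label{eq:plan-u-def}
u(x) := \sqrt{K(x)}\, e^{w(x)/2}\, e^{-\ii \theta(x)}, \qquad x \in \R,
\ee
so that $|u|^2 = K e^w$, $\|u\|_{L^2}^2 = \Lambda < 2\pi$, and $u$ is, by construction, the boundary value on $\R$ of a holomorphic function $U$ on the upper half-plane. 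The $\sqrt{K}$-form of the decay bound in Assumption \textbf{(A)}, combined with the a priori information in Theorem \ref{thm:main}, places $u$ in the Hardy-type function space used in the CM-DNLS ground-state theory.

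\textbf{From \eqref{eq:Liouville} to a first-order equation for $u$.} Holomorphy of $\log U$ on the upper half-plane yields the harmonic-conjugate identity
\be \label{eq:plan-conj}
\hD \log|u|^2 = -2\, \pt_x \arg u.
\ee
Inserting $\log|u|^2 = w + \log K$ into \eqref{eq:plan-conj} and using the Liouville equation in the form $\hD w = |u|^2$ produces the first-order equation
\be \label{eq:plan-first-order}
\pt_x \arg u + \tfrac12 |u|^2 = - \tfrac12\, \hD \log K \quad \text{in $\R$}.
\ee
When $K \equiv 1$ the right-hand side vanishes and \eqref{eq:plan-first-order} is precisely the equation for even ground-state solitons of CM-DNLS, whose explicit rational profiles $u(x) = c/(x+\ii a)$ reproduce the classical formulas $w(x) = \log(2a/(a^2+x^2))$ at $K\equiv 1$. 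For general $K$ satisfying Assumption \textbf{(A)}, the decay bound on $\sqrt{K}$ is tailor-made so that the source $-\tfrac12\, \hD\log K = -\hD \log \sqrt{K}$ can be absorbed by a $\sqrt{K}$-dependent gauge change into the CM-DNLS structure, thereby identifying \eqref{eq:plan-first-order} with the ground-state equation of a suitably perturbed CM-DNLS system.

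\textbf{Uniqueness via integrability.} Since $w$ and $\widetilde{w}$ are even (Theorem \ref{thm:main}(ii)), the associated complex functions $u$ and $\widetilde{u}$ both belong to the symmetric class of CM-DNLS ground states. The completely integrable Lax-pair structure of CM-DNLS provides an explicit one-parameter family of such even solitons, naturally parametrized by the mass $\Lambda = \|u\|_{L^2}^2$, or equivalently by the height $|u(0)|^2$. The hypothesis $w(0) = \widetilde{w}(0)$ gives
$$
|u(0)|^2 = K(0)\, e^{w(0)} = K(0)\, e^{\widetilde{w}(0)} = |\widetilde{u}(0)|^2,
$$
so the classification forces $u \equiv \widetilde{u}$, and therefore $w = \log(|u|^2/K) \equiv \widetilde{w}$ on $\R$.

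\textbf{Main obstacle.} The crux of the argument is the second step: identifying \eqref{eq:plan-first-order} with a genuine CM-DNLS ground-state equation inside a framework where the G\'erard--Lenzmann classification actually applies. This requires incorporating the $K$-dependent source $\hD \log K$ into the integrable structure rather than treating it as a mere perturbation, and verifying that the holomorphic factorization \eqref{eq:plan-u-def} produces $u$ in exactly the function space in which the classification is formulated. The remaining ingredients---the construction of $u$ via the harmonic conjugate, the symmetry reduction, and the monotone parametrization of the soliton family by $w(0)$---are comparatively routine given Assumption \textbf{(A)} and Theorem \ref{thm:main}.
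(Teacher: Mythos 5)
Your reformulation step is sound and in fact coincides with the paper's own starting point: writing $v=\sqrt{Ke^w}$ (equivalently your $|u|$) and applying the Hilbert transform to \eqref{eq:Liouville} yields exactly the first-order equation $\pt_x v-(\pt_x\log\sqrt{K})v+\tfrac12\Hil(v^2)v=0$, which is your \eqref{eq:plan-first-order} in polar form. The gap is in the step you yourself flag as the ``main obstacle'': the classification of even CM-DNLS solitons via the Lax-pair/Hardy-space structure is available only in the case of no external potential, i.e.\ $K\equiv 1$, where the solitons are the explicit rational profiles and uniqueness was already known by several methods. For a non-constant $K$ under Assumption \textbf{(A)} --- including the Gaussian, which corresponds to the harmonic potential $V=x^2$ --- no such classification of minimizers/solitons exists, and there is no known way to ``absorb'' the source $\hD\log K$ into the integrable structure by a gauge change. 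Your argument therefore reduces the theorem to an unproved (and, as far as anyone knows, open) classification statement, so the proof does not close.

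What the paper does instead is entirely non-integrable: it converts the $v$-equation into the fixed-point form \eqref{eq:v_int} with parameter $\lambda=v(0)/\sqrt{K(0)}$, proves local uniqueness of the branch $\lambda\mapsto v_\lambda$ by the implicit function theorem, and then runs a global continuation down to $\lambda\to 0^+$, where uniqueness holds by a contraction estimate. The heart of the matter is the nondegeneracy of the linearized operator, which is reduced to showing that the only even $\psi\in\dot H^1$ with $\psi(0)=0$ solving $(-\Delta)^{1/2}\psi-v^2\psi=0$ is $\psi\equiv 0$; this is proved by a monotonicity formula exploiting the negativity of $\int_0^\infty(\Hil g)g$ for odd $g$ (positivity of the Carleman--Hankel kernel $(x+y)^{-1}$) combined with the symmetric-decreasing property of $v^2$. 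If you want to salvage your approach, you would need to supply a substitute for the missing classification, and that substitute is essentially the linearized nondegeneracy plus continuation argument above.
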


\begin{remarks*}
1) In view of existing techniques, we consider Theorem \ref{thm:main2} to be the most original contribution of the present paper. Further below, we will comment in more detail on the strategy behind its proof.

2) It remains an interesting open question whether -- instead of prescribing the `initial value' $w(0)$ -- we also have uniqueness of solutions $w$ determined by the value of the total $Q$-curvature $\Lambda$. That is, if for two solutions $\widetilde{w}, w \in L_{1/2}(\R)$ of \eqref{eq:Liouville} such that
$$
\int_\R K e^{\widetilde{w}} \, dx = \int_\R K e^w \, dx
$$
we necessarily have that the identity $\widetilde{w} \equiv w$ holds. We hope to address this open problem in the future.

3) We remark that the uniqueness result in Theorem \ref{thm:main2} is {\em non-perturbative}, since no smallness condition on either $K$ nor the `initial value' $w(0)$ is imposed. 

%An affirmative answer to this question would follow from the positivity of a certain integral expression; see the corresponding remark in Section \ref{sec:unique} for more details.
\end{remarks*}

\subsection*{Comments on the Uniqueness Proof}
Let us briefly describe the strategy behind proving the uniqueness result stated in Theorem \ref{thm:main2} above. The starting point rests on recasting the problem by introducing the positive function $v : \R \to \R_{>0}$ given by
\be \label{eq:v_w}
v = \sqrt{K e^w}  .
\ee
In terms of $v$, the nonlocal Liouville equation \eqref{eq:Liouville} acquires the form
\be \label{eq:vCM}
\pt_x v + W v + \frac{1}{2} \Hil(v^2) v = 0 \quad \mbox{in $\R$}.
\ee
Here $\Hil$ denotes the Hilbert transform on the real line and the function 
$$
W = -\pt_x (\log \sqrt{K})
$$
plays the role of a given external potential. In fact, equation \eqref{eq:vCM} and its solutions $v$ naturally arise in the study of \textbf{solitons} for the Calogero-Moser derivative NLS; see below. 

Despite the nonlocality of the Hilbert transform $\Hil$, it turns out that \eqref{eq:vCM} becomes more amenable to the study of uniqueness for solutions $v$ parametrized by its `initial value' $v_0 = v(0)$. To this end, we recast \eqref{eq:vCM} once more into a corresponding integral equation stated in \eqref{eq:v_int} below, where $v_0>0$ enters as a parameter. As a next essential step towards proving Theorem \ref{thm:main2}, we establish a local uniqueness result around any given solution $v$ of \eqref{eq:vCM} by constructing a locally unique branch parametrized by $v_0$ using the implicit function theorem. To achieve this, we show that the invertibility of the relevant linearized (and nonlocal) operator  is tantamount to ruling out non-trivial solutions $\psi \in \dot{H}^1_{\mathrm{even}}(\R)$ with $\psi(0)=0$ that satisfy
\be
(-\Delta)^{1/2} \psi - v^2 \psi = 0 \quad \mbox{in $\R$}.
\ee
Here the use of a \textbf{monotonicity formula} for the fractional Laplacian $(-\Delta)^{1/2}$ found in \cite{CaSo-05,CaSi-14} (and applied for the spectral analysis related to nonlinear ground states in \cite{FrLeSi-16}) becomes the key ingredient. However, in contrast to these works, we develop a different approach which completely avoids the use of the harmonic extension to the upper half-plane $\R^2_+$. Instead, we directly work with the singular integral expression for $(-\Delta)^{1/2}$ and we thus obtain expressions which relate to the classical theory of Carleman-Hankel operators on the half-line; see Section \ref{sec:uniqueness} and Appendix \ref{sec:mono} for more details. We believe  that this novel  approach for monotonicity formulas can lead to further general insights into spectral and uniqueness problems involving the fractional Laplacian $(-\Delta)^{s}$ with $s \in (0,1)$ and other suitable pseudo-differential operators (but which may not be seen as a Dirichlet-to-Neumann maps).

Once the local uniqueness for solutions $v$ of \eqref{eq:vCM} is established, we complete the proof of Theorem \ref{thm:main2} by a-priori bounds allowing us to make a global continuation argument linking to the limit $v_0 \to 0^+$, which finally shows that there exists only one global branch of solutions $v$ parametrized by $v(0)=v_0$.

\subsection*{Solitons for the Calogero-Moser Derivative NLS}
We now sketch the connection between the nonlocal Liouville equation \eqref{eq:Liouville} and solitons for the {\em Calogero-Moser derivative NLS}, which is a Hamiltonian PDE that can be written as 
\be \tag{CM} \label{eq:CM-NLS}
\ii \pt_t \psi = -\pt_{xx} \psi + V \psi - ((-\Delta)^{1/2} |\psi|^2) \psi + \frac{1}{4} |\psi|^4 \psi 
\ee
for the complex-valued field $\psi : [0,T) \times \R \to \C$.  For the external potential $V$, the choices 
$$ 
\mbox{$V(x) = x^2$ (external harmonic potential) and $V(x) \equiv 0$ (no external potential)}
$$ 
both arise naturally in the physical context of continuum limits of completely integrable many-body systems of Calogero-Moser type. For a formal derivation of \eqref{eq:CM-NLS} in the physics literature, we refer to \cite{AbBeWi-09,AbGrKu-11}. A striking feature, indicating that a completely integrable nature of the problem, is that \eqref{eq:CM-NLS} stems from a Hamiltonian energy functional $\mathcal{E}(\psi)$ which admits a factorization into a complete square of first-order terms; see \cite{GeLe-22, Ah-22}. More precisely, the Hamiltonian energy (up to an inessential additive term) is found to be
\be
\mathcal{E}(\psi) = \frac{1}{2} \int_{\R} \big |\pt_x \psi + \sqrt{V} \psi + \frac{1}{2} \Hil(|\psi|^2) \psi \big |^2 \, dx .
\ee
Minimizers of $\mathcal{E}(\psi)$ provide soliton solutions for \eqref{eq:vCM}; see \cite{GeLe-22, Ah-22} again for more details. Evidently,  solutions $v$ of \eqref{eq:vCM} above with $W = \sqrt{V}$ are minimizers for $\mathcal{E}(\psi)$. For an external harmonic potential when $V=x^2$, this corresponds to the choice of a prescribed $Q$-curvature in \eqref{eq:Liouville} given by the Gaussian function $K(x) = e^{-x^2}$. which clearly falls under the scope of Assumption \textbf{(A)}. 

In the case of no external potential $V \equiv 0$ and hence $K \equiv 1$ is a positive constant, it was recently shown in \cite{GeLe-22} by Hardy-space techniques that the real-valued minimizers $v \in H^1(\R)$ must be of the explicit form
\be
v(x) =  \pm \sqrt{\frac{2 \lambda}{1+\lambda^2(x-x_0)^2}}
\ee 
with arbitrary $\lambda >0$ and $x_0 \in \R$. Translating this back via \eqref{eq:v_w} and using regularity theory, this shows that all solutions $w \in L_{1/2}(\R)$ of the nonlocal Liouville equation \eqref{eq:Liouville} with $K \equiv 1$ and $e^w \in L^1(\R)$ are given by
\be \label{eq:w_explicit}
w(x) = \log \left ( \frac{2 \lambda}{1+\lambda^2 (x-x_0)^2} \right ).
\ee
This uniqueness result for \eqref{eq:Liouville} with constant $K \equiv 1$ has also been obtained in \cite{ChYa-97,Xu-05,DaMa-17} by different techniques. However, the approach taken in \cite{GeLe-22} provides yet another self-contained and independent proof of this fact by exploiting the relation to solitons for \eqref{eq:CM-NLS}.

\subsection*{Acknowledgments} E.~L.~gratefully acknowledges financial support by the Swiss National Science Foundation (SNSF) under Grant No.~204121.

\section{Preliminaries}

We first collect some results that can be deduced by adapting known arguments. In particular, the results in this section will imply that item (i) in Theorem \ref{thm:main} holds true.

Throughout this section, we always assume that $w \in L_{1/2}(\R)$ solves \eqref{eq:Liouville} subject to \eqref{eq:Lambda}, where the $Q$-curvature function $K$ satisfies Assumption \textbf{(A)}. 

\subsection{Regularity, Asymptotics, and Universal Bound on $\Lambda$}

We start by collecting some immediate facts about the $Q$-curvature function $K$ satisfying Assumption \textbf{(A)}.

\begin{lem} \label{lem:K_prop}
It holds that $\sqrt{K}, K \in H^1(\R)$ and $K \in L^1(\R) \cap L^\infty(\R)$.
\end{lem}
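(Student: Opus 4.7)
The plan is to extract everything from the pointwise bound in Assumption \textbf{(A)}(ii) together with the monotonicity/evenness and $C^1$-regularity in (i) and (ii). I would proceed in four short steps.

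First, squaring the bound on $\sqrt{K}$ gives $K(x) \leq C^2 \langle x \rangle^{-1-2\delta}$, which is integrable on $\R$ because $2\delta > 0$. This immediately yields $K \in L^1(\R)$ and also that $\sqrt{K} \in L^2(\R)$. Since $K$ is even and monotone decreasing in $|x|$, its maximum is attained at the origin with $K(0) \leq C^2$, so $K \in L^\infty(\R)$. This settles the $L^1 \cap L^\infty$ claim.

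Second, for $\sqrt{K} \in H^1(\R)$ it remains to control $\pt_x \sqrt{K}$ in $L^2$. On the exterior region $|x| \geq 1$ the bound $|x \pt_x \sqrt{K}(x)| \leq C \langle x \rangle^{-1/2-\delta}$ yields $|\pt_x \sqrt{K}(x)| \leq C |x|^{-3/2-\delta}$, whose square is integrable at infinity. On $|x| \leq 1$ one cannot use the pointwise bound alone (it formally blows up like $1/|x|$), but here the $C^1$-hypothesis saves the day: $\pt_x \sqrt{K}$ is continuous on the compact interval $[-1,1]$ and hence bounded there, so its square is trivially integrable on $[-1,1]$. Adding the two contributions gives $\pt_x \sqrt{K} \in L^2(\R)$.

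Third, for $K \in H^1(\R)$, note that $K = \sqrt{K}\cdot \sqrt{K} \in L^2(\R)$ follows from $K \in L^1 \cap L^\infty$. The weak derivative factorizes as $\pt_x K = 2 \sqrt{K}\, \pt_x \sqrt{K}$, and hence
\[
\|\pt_x K\|_{L^2}^2 \;\leq\; 4 \|\sqrt{K}\|_{L^\infty}^2 \|\pt_x \sqrt{K}\|_{L^2}^2,
\]
which is finite by the first two steps.

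There is no real obstacle here; the only point worth flagging is the behavior of $\pt_x \sqrt{K}$ near the origin, where the scale-invariant bound degenerates. This is handled by local $C^1$-regularity (indeed, since $\sqrt{K}$ is even and $C^1$, one has $\pt_x \sqrt{K}(0)=0$), so the $L^2$-integrability near zero is automatic.
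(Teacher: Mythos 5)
Your proof is correct and follows essentially the same route as the paper: obtain $K \in L^1 \cap L^\infty$ and $\sqrt{K} \in L^2$ from the pointwise decay, get $\pt_x \sqrt{K} \in L^2$ by combining the bound $|x\,\pt_x\sqrt{K}| \lesssim \langle x\rangle^{-1/2-\delta}$ away from the origin with local boundedness of the continuous derivative near the origin, and then factor $\pt_x K = 2\sqrt{K}\,\pt_x\sqrt{K}$. Your explicit flagging of the degeneracy of the scale-invariant bound at $x=0$ is exactly the point the paper handles by invoking continuity of $\pt_x\sqrt{K}$.
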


\begin{proof}
Since $\sqrt{K} \lesssim \langle x \rangle^{-1/2-\delta}$ for some $\delta > 0$, we readily see that $\sqrt{K} \in L^2(\R) \cap L^\infty(\R)$, whence it follows that $K \in L^1(\R) \cap L^\infty(\R)$. Furthermore, by the bound $|x \pt_x \sqrt{K}(x)| \lesssim \langle x \rangle^{-1/2-\delta}$ for some $\delta > 0$ and the fact that $\pt_x \sqrt{K}$ is continuous and hence locally bounded, we deduce that $\pt_x \sqrt{K} \in L^2(\R)$ holds. Thus $\pt_x K = 2  \sqrt{K} \pt_x \sqrt{K}\in L^2(\R)$ since $\sqrt{K} \in L^\infty(\R)$. This shows that $\sqrt{K}$ and $K$ both belong to $H^1(\R)$.
\end{proof}

Next, we derive the following regularity result for solutions $w \in L_{\frac 1 2}(\R)$ of \eqref{eq:Liouville} subject to the integrability condition $K e^w \in L^1(\R)$.

\begin{lem} \label{lem:reg_w}
It holds that $w \in C^{1,1/2}_{\mathrm{loc}}(\R)$. 
\end{lem}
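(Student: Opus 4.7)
My plan for proving that $w \in C^{1,1/2}_{\mathrm{loc}}(\R)$ is a standard bootstrap argument that combines a Brezis--Merle type inequality for the half-Laplacian with fractional Schauder estimates. Throughout, I may assume the conclusions of Lemma \ref{lem:K_prop}, in particular $K \in L^\infty(\R) \cap C^1(\R)$.

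First, I would establish that $e^{pw} \in L^1_{\mathrm{loc}}(\R)$ for every finite $p \geq 1$. The input is that $f := Ke^w$ satisfies $f \in L^1(\R)$ by \eqref{eq:Lambda}. Hence, for any prescribed $\varepsilon > 0$, a standard covering argument lets me cover any bounded interval $I \subset \R$ by finitely many intervals on which $\int f \, dy < \varepsilon$. I then split $w = w_1 + w_2$, where $w_1$ solves $\hD w_1 = f \mathbbm{1}_{\{|y - x_0| < R\}}$ on $\R$ (given by convolution with the logarithmic kernel, up to an additive constant) and $w_2$ solves $\hD w_2 = f \mathbbm{1}_{\{|y-x_0| \geq R\}}$. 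The tail part $w_2$ is bounded and H\"older continuous on the smaller ball since the relevant integrand is bounded, while the local part $w_1$ is controlled by the fractional Brezis--Merle / Adams-Moser-Trudinger inequality on the line (see \cite{DaMa-17, HyMaMa-21}), which yields $\int e^{p w_1} \, dx < \infty$ on the small ball whenever $\varepsilon$ is chosen small enough relative to $p$. Taking $p$ arbitrarily large and covering $I$ by finitely many such balls, I conclude $e^{pw} \in L^1_{\mathrm{loc}}(\R)$ for every $p < \infty$.

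From this it follows that $f = Ke^w \in L^p_{\mathrm{loc}}(\R)$ for every finite $p$, since $K \in L^\infty(\R)$. I can then feed this into fractional elliptic regularity for $\hD$: using either the explicit Riesz potential representation (with the log kernel in one dimension) localized by a cutoff, or equivalently the mapping properties of $\hD$ on Bessel-potential spaces, $w$ inherits H\"older regularity $w \in C^{0,\alpha}_{\mathrm{loc}}(\R)$ for every $\alpha \in (0, 1)$. Pick $\alpha = 1/2$. Because $K \in C^1(\R)$ and $x \mapsto e^{w(x)}$ is then in $C^{0,1/2}_{\mathrm{loc}}(\R)$ (and locally bounded), I obtain $f = K e^w \in C^{0,1/2}_{\mathrm{loc}}(\R)$.

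Finally, I apply the Schauder-type estimate for the half-Laplacian on $\R$: if $u \in L_{1/2}(\R)$ solves $\hD u = g$ with $g \in C^{0,\alpha}_{\mathrm{loc}}$ for some $\alpha \in (0,1)$ such that $1 + \alpha \notin \N$, then $u \in C^{1,\alpha}_{\mathrm{loc}}(\R)$ (see, e.g., the estimates based on the extension in \cite{CaSi-14} or the direct singular-integral approach of \cite{HyMaMa-21}). With $\alpha = 1/2$ this delivers exactly $w \in C^{1,1/2}_{\mathrm{loc}}(\R)$, as claimed.

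The main technical obstacle is the first step: running the Brezis--Merle alternative for the \emph{nonlocal} operator $\hD$ on the full real line requires handling the long-range contribution of $f$ carefully, since splitting the equation introduces a harmonic (in the fractional sense) remainder that must be shown to be locally bounded rather than merely tempered. The $L^1(\R)$-hypothesis on $f$ and the $L_{1/2}(\R)$-integrability of $w$ provide exactly what is needed to control this tail, as developed in the references above, so the argument goes through and each subsequent bootstrapping step is routine.
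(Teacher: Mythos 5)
Your proposal is correct and follows essentially the same route as the paper: an $\eps$-regularity / Brezis--Merle step to obtain $e^{pw} \in L^1_{\mathrm{loc}}(\R)$ for all finite $p$ (the paper proves this directly via Jensen's inequality applied to the logarithmic potential of the small-$L^1$-mass piece, with the $\tfrac12$-harmonic remainder handled by the Liouville theorem of \cite{Fa-16}), followed by the identical bootstrap through $L^p_{\mathrm{loc}}$ for $Ke^w$, local H\"older continuity of $w$, and fractional Schauder estimates. The only cosmetic difference is that you split $f$ by localization in space while the paper splits off a piece of small $L^1$ norm globally, and you cite the fractional Brezis--Merle inequality as a black box where the paper reproduces its one-line Jensen proof.
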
 

\begin{proof}
We can adapt the arguments presented in \cite{HyMaMa-21}, where regularity for the equation $(-\Delta)^{n/2} u = |x|^{n \alpha} e^{nu}$ in $\R^n$ with $\alpha > -1$ subject to the integrability condition $\int_{\R^n} |x|^{n\alpha} e^{nu} < +\infty$ is discussed. 

For the reader's convenience, we sketch the necessary modifications for our case. First, we show that $e^w \in L^p_{\mathrm{loc}}(\R)$ for any $p \in [1, \infty)$ by an `$\eps$-regularity trick' as follows. Indeed, for any such $p \geq 1$, we can take $0 < \eps < \frac{\pi}{p}$ and we split $K e^w = f_1 + f_2$ with $f_1, f_2 \geq 0$ such that $f_1 \in L^1(\R) \cap L^\infty(\R)$ and $\| f_2 \|_{L^1} \leq \eps$. Next, we write
$$
w = w_1 + w_2 + w_3
$$
with the functions
$$
w_i(x) := \frac{1}{\pi} \int_{\R} \log \left ( \frac{1+|y|}{|x-y|} \right ) f_i(y) \, dy \quad \mbox{for $i=1,2$}, \quad w_3 := w-w_1-w_2.
$$
We have that $w_1 \in C^0(\R)$ and $w_3 \in C^\infty(\R)$ since $w_3$ is $\frac{1}{2}$-harmonic. (In fact, we can deduce that $w_3$ must be a constant function; see also the proof of Lemma \ref{lem:w_int_log} below.) For any $R >0$ be given, we apply Jensen's inequality to find
\begin{align*}
\int_{B_R} e^{p w_2} \, dx & = \int_{B_R} \exp \left ( \int_{\R} \frac{p \| f_2 \|_{L^1}}{\pi} \log \left ( \frac{1+|y|}{|x-y|} \right ) \frac{f_2(y)}{\|f_2 \|_{L^1}} \,dy \right )  dx \\
& \leq \int_{B_R} \int_{\R} \exp \left ( \frac{p \| f_2 \|_{L^1}}{\pi} \log \left ( \frac{1+|y|}{|x-y|} \right ) \right ) \frac{f_2(y)}{\|f_2 \|_{L^1}} \, dy \, dx \\
& = \frac{1}{\|f_2 \|_{L^1}} \int_\R f_2(y) \int_{B_R} \left ( \frac{1+|y|}{|x-y|} \right )^{\frac{p \|f_2 \|_{L^1}}{\pi}} \,dx \, dy < +\infty,
\end{align*}
since $p \| f_2 \|_{L^1} \leq p \eps < \pi$ holds. This shows that $e^{w_2} \in L^p_{\mathrm{loc}}(\R)$ and hence $e^w \in L^p_{\mathrm{loc}}(\R)$ by the regularity of $w_1$ and $w_3$.

Since $K \in L^\infty(\R)$ and thus $K e^w \in L^p_{\mathrm{loc}}(\R)$, we see that $w \in W^{1,p}_{\mathrm{loc}}(\R)$ for all $p \in (1, \infty)$.  By Sobolev embeddings, this implies the H\"older continuity $w \in C^{0,\alpha}_{\mathrm{loc}}(\R)$ for any $\alpha \in (0,1)$. Recall that $K \in H^1(\R) \subset C^{0,1/2}(\R)$, whence it follows that $K e^w \in C^{0,1/2}_{\mathrm{loc}}(\R)$. By local Schauder-type estimates for fractional Laplacians, we deduce that $w \in C^{1,1/2}_{\mathrm{loc}}(\R)$.
\end{proof}

\begin{lem} \label{lem:w_int_log}
It holds that
$$
w(x) = \frac{1}{\pi} \int_{\R} \log \left ( \frac{1+|y|}{|x-y|} \right ) K(y) e^{w(y)} \, dy + C
$$
with some constant $C \in \R$. Moreover, we have $K e^w \in L^1(\R) \cap L^\infty(\R)$ and the asymptotics 
$$
w(x) = -\frac{\Lambda}{\pi} \log |x| + O(1) \quad \mbox{as} \quad  |x| \to +\infty
$$
holds, where $\Lambda = \int_{\R} K e^w \, dx >0$. Finally, we have that $w_+ = \max \{w,0 \} \in L^\infty(\R)$ and
$$
\int_{\R} \log (1 +|x|) K(x) e^{w(x)} \, dx < +\infty.
$$
\end{lem}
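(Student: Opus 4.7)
The plan is to reuse the decomposition introduced in the proof of Lemma \ref{lem:reg_w}: with $f := Ke^w$ and
$$u(x) := \frac{1}{\pi}\int_\R \log\frac{1+|y|}{|x-y|}\, f(y)\,dy,$$
the combination $w_1+w_2$ from the preceding proof equals $u$ (since $f_1+f_2=f$), and the third piece $w_3 = w - u$ is distributionally $\frac{1}{2}$-harmonic on $\R$. The integral $u$ is absolutely convergent for a.e.\ $x$ (the logarithmic singularity at $y=x$ is integrable and the integrand is $O(|x|/|y|)$ at infinity) and $(-\Delta)^{1/2}u = f$ in $\mathcal{D}'(\R)$ follows by a direct duality calculation, using that $-\tfrac{1}{\pi}\log|x|$ is a fundamental solution of $(-\Delta)^{1/2}$ on $\R$.

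Next I would promote $w_3$ to a constant by a Liouville-type argument. A Fubini estimate, $\int \frac{1+|\log|x-y||}{1+x^2}\,dx \lesssim 1+\log(1+|y|)$, shows $u \in L_{1/2}(\R)$ once the log-weighted bound $\int \log(1+|y|) f\,dy < +\infty$ is secured (which I handle via a bootstrap described below). Combined with $w \in L_{1/2}(\R)$, this yields $w_3 \in L_{1/2}(\R) \subset \mathcal{S}'(\R)$. Taking Fourier transforms gives $|\xi|\widehat{w_3}=0$, so $\widehat{w_3}$ is supported at the origin and $w_3$ is a polynomial; since the only polynomials lying in $L_{1/2}(\R)$ are constants, $w_3 \equiv C$, proving the integral representation.

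For the asymptotics, write $\log\frac{1+|y|}{|x-y|} = \log(1+|y|) - \log|x| - \log|1-y/x|$ for $|x|\gg 1$, and use dominated convergence to obtain
$$u(x) = -\frac{\Lambda}{\pi}\log|x| + \frac{1}{\pi}\int_\R \log(1+|y|) f(y)\,dy + o(1),$$
so that $w(x) = -\frac{\Lambda}{\pi}\log|x| + O(1)$ as $|x|\to\infty$. In particular $w_+$ is bounded at infinity, and combined with the local $C^{1,1/2}$ regularity from Lemma \ref{lem:reg_w} this gives $w_+\in L^\infty(\R)$ and hence $Ke^w\in L^\infty(\R)$ via $K\in L^\infty(\R)$ (Lemma \ref{lem:K_prop}). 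The weighted bound $\int \log(1+|x|)\,Ke^w\,dx<+\infty$ is then immediate, since $w_+ \in L^\infty$ and $\log(1+|x|)K(x)\lesssim \log(1+|x|)\langle x\rangle^{-1-2\delta}$ is integrable by Assumption \textbf{(A)}.

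The main obstacle is the apparent circularity: the sharp asymptotic needs the finiteness of $\int \log(1+|y|) Ke^w\,dy$, which is itself one of the conclusions. I would resolve this by a bootstrap—first apply the crude majorant $\log(1+|y|) \lesssim_\eps \langle y\rangle^\eps$ for small $\eps>0$ inside $u$ to derive a preliminary one-sided bound $w(x) \leq C - c\log|x|$ at infinity (requiring only $\int \langle y\rangle^\eps f\,dy<+\infty$, which follows from $K$-decay and a local $L^p$ bound on $e^w$ from Lemma \ref{lem:reg_w}), thereby securing polynomial decay of $f$ at infinity; a second pass through the argument then delivers both the log-weighted integrability and the sharp asymptotics simultaneously.
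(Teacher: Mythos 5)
Your treatment of the integral representation (decomposition into a logarithmic potential plus a $\tfrac12$-harmonic remainder, Liouville argument forcing the remainder to be constant) and your final asymptotic expansion are essentially the paper's argument; the paper cites \cite{Fa-16} for the classification of entire $\tfrac12$-harmonic functions in $L_{1/2}(\R)$ where you sketch a direct Fourier argument, which is fine. The problem is your bootstrap, which is where the real difficulty of the lemma sits. You claim that $\int_\R \langle y\rangle^{\eps} K e^{w}\,dy<+\infty$ ``follows from $K$-decay and a local $L^p$ bound on $e^w$.'' It does not: $e^w\in L^p_{\mathrm{loc}}$ gives no control whatsoever on the size of $e^w$ near infinity, and Assumption \textbf{(A)} bounds $K$ only from above, so a priori $e^w$ may grow fast enough on the tail that $Ke^w\in L^1$ while $\langle y\rangle^{\eps}Ke^w\notin L^1$ for every $\eps>0$. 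Worse, any attempt to make the local $L^p$ bounds \emph{uniform} in the center of the ball via the $\eps$-regularity/Jensen argument of Lemma \ref{lem:reg_w} itself requires a small positive moment of $f=Ke^w$, so your bootstrap is circular at exactly the point where it is supposed to break the circularity. (Even granting the moment bound, the upper estimate $w(x)\le C-c\log|x|$ also needs the near-diagonal contribution $\int_{|x-y|\le 1}\log\frac{1}{|x-y|}f(y)\,dy$ to be bounded uniformly for large $|x|$, which again is a statement about $f$ at infinity that you have not secured.)

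The paper breaks the circle differently, and in the opposite logical order from yours: it first invokes the known result \cite{HyMaMa-21}[Remark 3.2], which yields the \emph{weak} asymptotic $w(x)/\log|x|\to-\Lambda/\pi$ using only $f\in L^1(\R)$ and the local regularity already established --- no weighted integrability of $f$ is needed at this stage. Since $\Lambda>0$, this already forces $w(x)<0$ for $|x|$ large, hence $w_+\in L^\infty(\R)$ and $e^w\in L^\infty(\R)$; Assumption \textbf{(A)} then gives the pointwise bound $0<Ke^w\lesssim\langle x\rangle^{-1-2\delta}$, from which the log-moment $\int\log(1+|x|)Ke^w\,dx<\infty$ is immediate, and only then is the sharp $O(1)$ expansion derived (your last display). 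If you want to keep your structure, you must either reprove the weak asymptotic from $f\in L^1$ alone (this is the genuinely delicate step) or cite it; as written, the first pass of your bootstrap does not get off the ground.
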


\begin{proof}
Define the function $\tilde{w}(x) := w(x)-\frac{1}{\pi} \int_{\R} \log \left ( \frac{1+|y|}{|x-y|} \right ) K(y) e^{w(y)} \, dy$. Then $\tilde{w} \in L_{1/2}(\R)$ satisfies $(-\Delta)^{1/2} \tilde{w} = 0$ in $\R$. From \cite{Fa-16} we deduce that  $\tilde{w} : \R \to \R$ is an affine function and thus $\tilde{w} = \mbox{const}.$, since we have $w \in L_{1/2}(\R)$. This proves the integral formula for $w$ stated above.

From the discussion in \cite{HyMaMa-21}[Remark 3.2] we deduce that
$$
\lim_{|x| \to +\infty} \frac{w(x)}{\log |x|} = -\frac{\Lambda}{\pi}
$$
with $\Lambda = \int_{\R} K e^w \, dx > 0$. Clearly, the above limit implies that $w(x) < 0$ for $|x| \geq R$ with $R >0$ sufficiently large. Since $w \in L^\infty_{\mathrm{loc}}(\R)$ by Lemma \ref{lem:reg_w}, we thus find $w^+ = \max\{w,0 \} \in L^\infty(\R)$ and hence $e^w \in L^\infty(\R)$. By our assumptions on $K$, this implies
$$
0 < K(x) e^{w(x)} \leq C \langle x \rangle^{-1-2\delta} \in L^1(\R) \cap L^\infty(\R)
$$
with some $\delta> 0$. Thus the function $f := Ke^w$ satisfies $\log(1+|\cdot|) f \in L^1(\R)$. Hence we can rewrite the integral formula for $w$  as
$$
w(x) = -\frac{1}{\pi} \int_{\R} \log |x-y| f(y) \,dy + C = -\frac{\Lambda}{\pi} \log |x| - \frac{1}{\pi} \int_\R \log \left | \frac{x-y}{x} \right | f(y) \, dy + C
$$
with some constant $C \in \R$. The asymptotic formula for $w$ now follows from
\be \label{eq:DCT}
\lim_{|x| \to +\infty} \int_\R  \log \left | \frac{x-y}{x} \right | f(y) \, dy = 0.
\ee
Indeed, this can be seen by splitting the integration into the sets $\{ |x-y| \geq |x|/2\}$ and $\{ |x-y| \leq |x|/2\}$ and by using that $f \in  L^1(\R; (1+\log(1+|x|)) dx) \cap L^2(\R)$ and dominated convergence. We omit the details.
\end{proof}

\begin{lem} \label{lem:poho}
The total $Q$-curvature satisfies $0 < \Lambda < 2\pi$.
\end{lem}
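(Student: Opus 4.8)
The bound $\Lambda > 0$ is immediate: $K > 0$, $e^w > 0$ everywhere, and $K e^w \in L^1(\R)$ is not identically zero. The real content is the strict upper bound $\Lambda < 2\pi$, which I would obtain from a \textbf{Pohozaev-type identity} produced by testing the equation against the dilation field $x\,\partial_x w$. Concretely, set $f := K e^w$ and consider
$$
I := \int_{\R} x\,\partial_x w(x)\, K(x) e^{w(x)}\, dx .
$$
By Lemma~\ref{lem:w_int_log} we have $f \in L^1(\R)\cap L^\infty(\R)$ with $f(x) \le C\langle x\rangle^{-1-2\delta}$, and differentiating the logarithmic representation of $w$ from Lemma~\ref{lem:w_int_log} (legitimate since $f$ is bounded and locally H\"older by Lemma~\ref{lem:reg_w}) gives $\partial_x w = -\Hil f$; hence $|x\,\partial_x w\, f| = |\Hil f|\,|xf|$ is integrable by H\"older's inequality, with exponents chosen according to $\delta$. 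The plan is to evaluate $I$ in two different ways and compare.

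For the \emph{nonlocal evaluation}, I would use $\partial_x w = -\Hil f$ together with the commutator identity $x\,\Hil f = \Hil(xf) + \tfrac{1}{\pi}\int_\R f$ and the antisymmetry $\int_\R (\Hil a)\,b = -\int_\R a\,(\Hil b)$ of the Hilbert transform, obtaining
$$
I = -\int_\R x\,(\Hil f)\, f\,dx = -\frac{1}{2\pi}\Big(\int_\R f\,dx\Big)^{2} = -\frac{\Lambda^2}{2\pi}.
$$
For the \emph{local evaluation}, since $w \in C^1_{\mathrm{loc}}(\R)$ I would write $x\,\partial_x w\, K e^w = xK\,\partial_x(e^w)$ and integrate by parts on $[-R,R]$:
$$
\int_{-R}^{R} xK\,\partial_x(e^w)\,dx = \big[\,xK e^w\,\big]_{-R}^{R} - \int_{-R}^{R} (K + xK')\,e^w\,dx .
$$
The boundary term is $Rf(R) + Rf(-R) = O(R^{-2\delta}) \to 0$, and $\int_\R |xK'|e^w \lesssim \int_\R \langle x\rangle^{-1-2\delta}\,dx < \infty$ by Assumption~\textbf{(A)}(ii) and $w_+ \in L^\infty(\R)$; letting $R\to\infty$ gives $I = -\Lambda - \int_\R xK'(x)e^{w(x)}\,dx$. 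Comparing the two expressions for $I$ yields the identity $\tfrac{\Lambda^2}{2\pi} = \Lambda + \int_\R xK'e^w\,dx$. Since $K$ is monotone decreasing in $|x|$, we have $xK'(x)\le 0$ for all $x$, hence $\Lambda^2 \le 2\pi\Lambda$, i.e.\ $\Lambda \le 2\pi$. For strictness, note that Assumption~\textbf{(A)}(ii) forces $K(x)\to 0$ as $|x|\to\infty$ (equivalently $K \in L^1(\R)$ by Lemma~\ref{lem:K_prop}), so the continuous function $K'$ is not identically zero; thus $xK'(x) < 0$ on a nonempty open set, and since $e^w > 0$ everywhere we get $\int_\R xK'e^w\,dx < 0$ strictly, whence $\Lambda < 2\pi$.

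The step requiring the most care is the nonlocal evaluation of $I$: one must know that $\partial_x w$ equals $-\Hil(Ke^w)$ \emph{exactly} (with no spurious affine part), which rests on the classification of $\tfrac12$-harmonic functions in $L_{1/2}(\R)$ already invoked in Lemma~\ref{lem:w_int_log}, and one must set up the commutator and antisymmetry identities for $\Hil$ in an $L^p$-framework compatible with the possibly very slow decay rate $\delta$ of Assumption~\textbf{(A)}, while also checking absolute convergence of all integrals and the vanishing of the boundary terms --- all of which follow from the decay and regularity of $w$ and $Ke^w$ established in Lemmas~\ref{lem:reg_w} and~\ref{lem:w_int_log}. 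The value $\Lambda = 2\pi$ in the case $K\equiv 1$ (where the explicit solutions \eqref{eq:w_explicit} have $\int_\R e^w\,dx = 2\pi$) is consistent with this identity, since then $K'\equiv 0$.
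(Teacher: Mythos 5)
Your proposal is correct and follows essentially the same route as the paper: a Pohozaev-type identity obtained by pairing the equation with the dilation field $x\,\partial_x w$, evaluating the nonlocal side via the (anti)symmetry of the Hilbert-transform kernel to get $-\Lambda^2/(2\pi)$ and the local side by integration by parts to get $-\Lambda - \int_\R xK'e^w\,dx$, then concluding from $xK'\le 0$ and the non-constancy of $K$. The only cosmetic difference is that you phrase the kernel symmetrization through the commutator identity $x\,\Hil f = \Hil(xf) + \tfrac{1}{\pi}\int_\R f$ and the antisymmetry of $\Hil$, whereas the paper writes $\tfrac{x}{x-y} = \tfrac12 + \tfrac12\tfrac{x+y}{x-y}$ directly in the truncated double integral; these are the same computation.
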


\begin{proof} 
By the positivity of $K(x)>0$, it is clear that $\Lambda >0$ holds. The upper bound $\Lambda < 2 \pi$ follows by Pohozaev-type argument adapted to Liouville type equations; see, e.\,g., \cite{Xu-05,DaMaRi-15}. For the reader's convenience, we state the proof adapted to our case. 

If we differentiate the integral equation for $w$ and multiply with $x K(x)$, we obtain
\be \label{eq:poho}
x K(x) \frac{\pt w}{\pt x} = -\frac{1}{\pi} PV \int_{\R} \frac{x K(x)}{x-y} K(y) e^{w(y)} \, dy .
\ee
Multiplication with $e^{w(x)}$ and integration of the left-hand side over $[-R,R]$ yields
\begin{align*}
I & := \int_{-R}^R x K(x) e^{w(x)} \frac{\pt w}{\pt x} \, dx = x K(x) e^{w(x)} \Big |_{x=-R}^R - \int_{-R}^{R} \frac{\pt}{\pt x} (xK(x)) e^{w(x)} \, dx \\
& \to -\Lambda  - \int_{\R} x (\pt_x K)(x) e^{w(x)} \, dx \quad \mbox{as} \quad R \to +\infty.
\end{align*}
Note that $x K(x) e^{w(x)} \Big |_{x=-R}^R \to 0$ as $R \to \infty$ since $|x K(x) e^{w(x)}| \leq C \langle x \rangle^{-2\delta}$ for some $\delta > 0$ in view of $e^{w} \in L^\infty(\R)$ and our assumption on $K$. Furthermore, we notice that $x (\pt_x K) e^{w(x)} \in L^1(\R)$ by the assumption on $K$.

On the other hand, if we use the right-hand side in \eqref{eq:poho} we deduce
\begin{align*}
II & := -\frac{1}{\pi} \int_{-R}^R \int_{\R} \frac{x K(x)}{x-y} e^{w(x)} K(y) e^{w(y)} \, dy \, dx = -\frac{1}{2 \pi} \int_{-R}^R \int_{\R} K(x) e^{w(x)} K(y) e^{w(y)} \, dy \, dx \\
& \quad - \frac{1}{2 \pi} \int_{-R}^R \int_\R \frac{x+y}{x-y} K(x) e^{w(x)} K(y) e^{w(y)} \, dy \, dx  \to -\frac{\Lambda^2}{2 \pi} + 0 \quad \mbox{as} \quad R \to \infty.
\end{align*}
Since $I=II$, we deduce that
$$
\frac{\Lambda}{2\pi} (\Lambda - 2 \pi)  = \int_{\R} (x \pt_x K) e^{w} \, dx .
$$
Because $K$ is monotone decreasing in $|x|$ and non-constant, we see that $\int_{\R} (x \pt_x K) e^w \, dx < 0$. This implies that $\Lambda < 2 \pi$ must hold.
\end{proof}

\subsection*{Proof of Theorem \ref{thm:main} (i)} The proof directly follows from Lemmas \ref{lem:reg_w} and \ref{lem:poho}. \hfill  $\Box$

\section{Even Symmetry}

This section is devoted to the proof of item (ii) in Theorem \ref{thm:main}. We implement the method of moving planes; actually, it is a `moving point' argument since we are in one space dimension. Because of the nonlocal nature of the problem, it is expedient to work with the equation for $w(x)$ written in integral form. We then adapt the moving plane method generalized to integral equations, which was initiated in the work of \cite{ChLiOu-06}.

For $\lambda > 0$ and $x \in \R$, we set
$$
\Sigma_\lambda := [\lambda, \infty), \quad x_\lambda := 2 \lambda - x, \quad w_\lambda(x) := w(x_\lambda), \quad \Sigma^w_\lambda := \{ x \in \Sigma_\lambda : w(x) > w_\lambda(x) \}.
$$
From the proof of Lemma \ref{lem:w_int_log} we recall the integral representation
\be \label{eq:w_int_log}
w(x) = \int_{\R} G(x-y) K(y) e^{w(y)} \, dy + C
\ee
with some constant $C \in \R$ and we denote
$$
G(x) = -\frac{1}{\pi} \log |x| .
$$

We first initiate the moving plane method by showing that $\Sigma_\lambda^w$ is empty in the regime of sufficiently large $\lambda$.

\begin{prop} \label{prop:mov1}
There exists $\lambda_0 > 0$ such that $\Sigma^w_\lambda = \emptyset$ for all $\lambda > \lambda_0$.
\end{prop}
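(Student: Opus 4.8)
The plan is to run the standard moving-plane-for-integral-equations argument of \cite{ChLiOu-06}, adapted to the one-dimensional nonlocal setting, using the decay $w(x) = -\tfrac{\Lambda}{\pi}\log|x| + O(1)$ from Lemma \ref{lem:w_int_log} to control the boundary regime $\lambda \to +\infty$. Write $f := K e^w$, which by Lemma \ref{lem:w_int_log} is strictly positive, lies in $L^1(\R)\cap L^\infty(\R)$, and satisfies $\log(1+|\cdot|) f \in L^1(\R)$. For $\lambda > 0$ define $f_\lambda(x) := K(x_\lambda) e^{w_\lambda(x)} = K(x_\lambda)e^{w(x_\lambda)}$. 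Subtracting the integral representation \eqref{eq:w_int_log} at $x$ and at the reflected point $x_\lambda$, and using $G(x-y) - G(x-y_\lambda) = G(x_\lambda - y_\lambda) - G(x_\lambda - y)$ together with the elementary comparison $G(x-y) - G(x-y_\lambda) > 0$ precisely when $x, y$ are on the same side of $\lambda$, one obtains for $x \in \Sigma_\lambda$ the key identity
\begin{equation*}
w(x) - w_\lambda(x) = \int_{\Sigma_\lambda} \big( G(x-y) - G(x - y_\lambda) \big)\big( f(y) - f_\lambda(y) \big)\, dy .
\end{equation*}

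Next I would estimate this on the set $\Sigma_\lambda^w = \{ x \in \Sigma_\lambda : w(x) > w_\lambda(x) \}$. On $\Sigma_\lambda^w$ the integrand contribution from $y \in \Sigma_\lambda \setminus \Sigma_\lambda^w$ has the right sign to be dropped (there $f(y) \le f_\lambda(y)$ by monotonicity of $K$ in $|x|$ plus $w(y) \le w_\lambda(y)$, while the kernel is positive), so we are left with an integral over $\Sigma_\lambda^w \times \Sigma_\lambda^w$. On $\Sigma_\lambda^w$ we have $w(y) > w_\lambda(y)$ and, since $|y| \ge |y_\lambda|$ for $y \in \Sigma_\lambda$ and $K$ is decreasing in $|x|$, also $K(y) \le K(y_\lambda)$; hence by the mean value theorem $f(y) - f_\lambda(y) \le K(y_\lambda)\big(e^{w(y)} - e^{w_\lambda(y)}\big) \le K(y_\lambda) e^{w(y)}\big(w(y) - w_\lambda(y)\big) \le f(y)\,e^{w_\lambda(y) - w(y)} \cdot (\cdots)$; more simply, bound $f(y) - f_\lambda(y) \le f(y) (w(y) - w_\lambda(y))_+$ after absorbing $K(y_\lambda)/K(y) \ge 1$ the wrong way — so instead use $f(y) - f_\lambda(y) \le K(y_\lambda)(e^{w(y)} - e^{w_\lambda(y)})$ and then $K(y_\lambda) \le K(0)$ together with $e^{w}$ bounded: the clean statement is $0 \le f(y) - f_\lambda(y) \le C\, (w(y) - w_\lambda(y))$ on $\Sigma_\lambda^w$ for a constant $C$ depending on $\|K\|_\infty$ and $\|e^w\|_\infty$. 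Combining with an $L^\infty$-on-$L^p$ Hardy–Littlewood–Sobolev-type bound for the logarithmic kernel restricted to $\Sigma_\lambda^w$ (or simply splitting the kernel into its bounded far part and its locally $L^q$ singular part near the diagonal and using $|\Sigma_\lambda^w| \to 0$), one gets, for some $q \in (1,\infty)$,
\begin{equation*}
\| w - w_\lambda \|_{L^q(\Sigma_\lambda^w)} \le C\, \eta(\lambda)\, \| w - w_\lambda \|_{L^q(\Sigma_\lambda^w)},
\end{equation*}
where $\eta(\lambda) \to 0$ as $\lambda \to +\infty$ because $|\Sigma_\lambda^w| \to 0$. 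Here the decay $w(x) \to -\infty$ ensures $\Sigma_\lambda^w$ is bounded and shrinks: for $x \in \Sigma_\lambda^w$ one needs $w(x) > w(x_\lambda) = w(2\lambda - x)$, and since $w$ is eventually strictly decreasing in $|x|$ near $+\infty$ (by the $\log$ asymptotics) this forces $x$ to lie in a bounded set once $\lambda$ is large, with measure tending to zero — this is exactly where the finite total $Q$-curvature hypothesis \eqref{eq:Lambda} is used.

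Finally, for $\lambda > \lambda_0$ with $\lambda_0$ chosen so that $C\,\eta(\lambda) < \tfrac12$, the displayed inequality forces $\| w - w_\lambda \|_{L^q(\Sigma_\lambda^w)} = 0$, hence (by continuity of $w$, Lemma \ref{lem:reg_w}) $\Sigma_\lambda^w = \emptyset$, which is the claim. The main obstacle is making the shrinking of $\Sigma_\lambda^w$ quantitative and handling the logarithmic — hence sign-changing and only conditionally positive — kernel $G$: unlike the Riesz kernels in \cite{ChLiOu-06}, $G(x) = -\tfrac1\pi \log|x|$ is not positive, so one must carefully exploit the \emph{difference} $G(x-y) - G(x-y_\lambda)$, which \emph{is} positive for $x,y \in \Sigma_\lambda$, and separately control its non-integrable-at-infinity growth using the weighted integrability $\log(1+|\cdot|)f \in L^1$ from Lemma \ref{lem:w_int_log}. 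A secondary technical point is that $w$ itself is only in $L_{1/2}(\R)$ a priori, so all manipulations should be done at the level of the integral equation \eqref{eq:w_int_log}, where everything is absolutely convergent, rather than with the PDE \eqref{eq:Liouville}.
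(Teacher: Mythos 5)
Your setup --- the reflected integral identity, the sign analysis of the kernel difference $G(x-y)-G(x_\lambda-y)$, the bound $f(y)-f_\lambda(y)\leq K(y)e^{w(y)}(w(y)-w_\lambda(y))$ on $\Sigma_\lambda^w$ via $K(y)\leq K(y_\lambda)$ and convexity of the exponential, and the final contraction-forces-measure-zero step --- all match the paper's proof. But the source of smallness you invoke is wrong, and this is a genuine gap. You claim $\eta(\lambda)\to 0$ because $|\Sigma_\lambda^w|\to 0$, arguing that the asymptotics $w(x)=-\frac{\Lambda}{\pi}\log|x|+O(1)$ make $w$ ``eventually strictly decreasing,'' which would force $\Sigma_\lambda^w$ to be bounded and shrinking. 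A logarithmic asymptotic with an $O(1)$ error does not imply eventual monotonicity, and more seriously, for $x\gg 2\lambda$ the reflected point $x_\lambda=2\lambda-x$ tends to $-\infty$ with $|x_\lambda|/|x|\to 1$, so $w(x)-w(x_\lambda)=-\frac{\Lambda}{\pi}\log(|x|/|x_\lambda|)+O(1)=O(1)$ with indeterminate sign. Controlling this would require comparing $w$ on the positive and negative half-lines --- i.e.\ precisely the symmetry and monotonicity the moving-plane argument is meant to establish. A priori $\Sigma_\lambda^w$ could be unbounded with large measure for every $\lambda$, so your contraction constant does not close.

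The paper's mechanism is different and does not need any information about the size or location of $\Sigma_\lambda^w$: it measures $w-w_\lambda$ in the weighted norm $\|\langle x\rangle^{-\alpha}(w-w_\lambda)\|_{L^1(\Sigma_\lambda^w)}$ with $\alpha=1+\delta$, bounds the kernel by $G(x-y)\mathbbm{1}_{\{|x-y|\leq 1\}}+\frac{1}{\pi}(\log 2+|\log x|+|\log y|)$ (the weight $\langle x\rangle^{-\alpha}$ makes the $x$-integral of the logarithmic growth finite, uniformly in $\lambda$), and extracts the contraction factor
$$
\sup_{y\geq\lambda}\big((|\log y|+1)\,\langle y\rangle^{\alpha} K(y)e^{w(y)}\big),
$$
which tends to $0$ as $\lambda\to+\infty$ solely because Assumption \textbf{(A)} gives $K(y)\lesssim\langle y\rangle^{-1-2\delta}$ and $e^w\in L^\infty(\R)$, so the supremand is $\lesssim(|\log y|+1)\langle y\rangle^{-\delta}$. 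In short: the smallness for large $\lambda$ comes from the decay of $Ke^w$ on $[\lambda,\infty)$, not from the measure of $\Sigma_\lambda^w$ (that kind of measure-shrinking argument appears only later, in the continuation step of Proposition \ref{prop:mov2}, where the limiting set is genuinely a single point). To repair your proof, replace the $L^q$ estimate and the claim $|\Sigma_\lambda^w|\to 0$ with this weighted $L^1$ scheme.
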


\begin{proof}
Let $\lambda > 0$. Using \eqref{eq:w_int_log} and the fact $G$ is an even function, a calculation yields 
$$
w(x) - w_\lambda(x) = \int_{\Sigma_\lambda} \left ( G(x-y) - G(x_\lambda-y) \right ) \left ( K(y) e^{w(y)} - K(y_\lambda) e^{w_\lambda(y)} \right ) \, dy.
$$
Since $G(x)$ and $K(x)$ are monotone decreasing functions in $|x|$, we deduce 
$$
G(x-y) \geq G(x_\lambda-y) \quad \mbox{and} \quad 0 < K(y) \leq K(y_\lambda)  \quad \mbox{for $x,y \in \Sigma_\lambda$}.
$$
For any $x \in \Sigma^w_\lambda \subset \Sigma_\lambda$, we thus estimate
\begin{align*}
w(x) - w_\lambda(x) & \leq \int_{\Sigma_\lambda} \left ( G(x-y) - G(x_\lambda-y) \right ) K(y) \left ( e^{w(y)} - e^{w_\lambda(y)} \right ) dy \\
& \leq \int_{\Sigma^w_\lambda}   \left ( G(x-y) - G(x_\lambda-y) \right ) K(y) \left ( e^{w(y)} - e^{w_\lambda(y)} \right ) dy \\
& \leq \int_{\Sigma^w_\lambda}   \left ( G(x-y) - G(x_\lambda-y) \right ) F_\lambda(y) \, dy, 
\end{align*}
where we denote
$$
F_\lambda(y) := K(y) e^{w(y)} \left ( w(y) - w_\lambda(y) \right )  .
$$
Note that $F_\lambda(y) > 0$ for $y \in \Sigma_\lambda^w$. Next, we observe the upper bounds 
$$
-G(x_\lambda-y) = \frac{1}{\pi} \log |2 \lambda - x -y| \leq \frac{1}{\pi} \left ( \log 2 + \left |\log x \right | + \left |\log y \right |\right )\quad \mbox{for $x,y \in \Sigma_{\lambda}$}, 
$$
and
$$
G(x-y) \leq 0 \quad \mbox{for $|x-y| \geq 1$}.
$$
Thus we find, for $x \in \Sigma^w_\lambda$,
\begin{align*}
0 < w(x) - w_\lambda(x) & \leq \int_{\Sigma^w_\lambda \cap \{|x-y| \leq 1\}} G(x-y) F_\lambda(y) \, dy  \\
& \quad + \frac{1}{\pi} \int_{\Sigma^w_\lambda} \left (\log 2 + \left |\log x \right | + \left |\log y \right | \right ) F_\lambda(y) \, dy.
\end{align*}
Next, we let $\alpha := 1 + \delta >1$ with $\delta > 0$ taken from Assumption \textbf{(A)}. Thus we deduce
\begin{align*}
\left \| \langle x \rangle^{-\alpha} (w-w_\lambda) \right \|_{L^1(\Sigma^w_\lambda)} & \leq \int_{\Sigma_\lambda^w} \left ( \int_{\Sigma^w_\lambda \cap \{ |x-y| \leq 1 \}} \langle x \rangle^{-\alpha} G(x-y) F_\lambda(y) \, dy \right )  dx \\
& \quad + \frac{1}{\pi} \int_{\Sigma_\lambda^w} \left ( \int_{\Sigma^w_\lambda} \langle x \rangle^{-\alpha}   \left ( \log 2 + \left |\log x \right | + \left |\log y \right |\right ) F_\lambda(y) \, dy \right ) dx  \\ 
& = \int_{\Sigma^w_\lambda} \left \{ C_{1, \Sigma^w_\lambda}(y) + C_{2, \Sigma^w_\lambda}(y) \right \}  F_\lambda(y) \, dy,
\end{align*}
where we have
\be \label{def:C1}
C_{1,\Sigma^w_\lambda} := \int_{\Sigma^w_\lambda \cap \{ |x-y| \leq 1 \}} \langle x \rangle^{-\alpha} G(x-y) \, dx \leq \int_{y-1}^{y+1} G(x-y) \, dx = \frac{2}{\pi},
\ee
\be \label{def:C2}
C_{2, \Sigma^w_\lambda} := \frac{1}{\pi} \int_{\Sigma^w_\lambda} \langle x \rangle^{-\alpha}  \left ( \log 2 + \left |\log x \right | + \left |\log y \right |\right )  dx \leq C ( \left | \log y \right | + 1) 
\ee 
with some constant $C=C(\alpha) > 0$ independent of $\lambda> 0$. Therefore, we have found that 
\begin{align*}
\left \| \langle x \rangle^{-\alpha} (w-w_\lambda) \right \|_{L^1(\Sigma^w_\lambda)} & \leq  C  \int_{\Sigma^w_\lambda} (\left | \log y \right | +1) K(y) e^{w(y)} (w(y) - w_\lambda(y)) \, dy \\
& \leq C   \sup_{y \geq \lambda} \left ( (\left | \log y \right | + 1) \langle y \rangle^{\alpha} K(y) e^{w(y)} \right )  \left \| \langle x \rangle^{-\alpha} (w-w_\lambda) \right \|_{L^1(\Sigma_\lambda^w)} 
\end{align*}
where the constant $C> 0$ is independent of $\lambda$. Since $e^w \in L^\infty(\R)$ and by Assumption \textbf{(A)} we have $K \leq C\langle x \rangle^{-1-\delta}$, there exists a constant $\lambda_0 > 0$ sufficiently large such that
$$
\left \| \langle x \rangle^{-\alpha} (w-w_\lambda) \right \|_{L^1(\Sigma^w_\lambda)} \leq \frac{1}{2} \left \| \langle x \rangle^{-\alpha} (w-w_\lambda) \right \|_{L^1(\Sigma^w_\lambda)} \quad \mbox{for $\lambda > \lambda_0$}.
$$
Thus the set $\Sigma_\lambda^w$ has measure zero for $\lambda > \lambda_0$, which implies that $\Sigma_\lambda^w = \emptyset$ for $\lambda > \lambda_0$  by continuity of $w-w_\lambda$.
\end{proof}

As a next step, we establish the following continuation property.

\begin{prop} \label{prop:mov2}
Suppose that $\lambda_0 > 0$ satisfies $\Sigma^w_\lambda = \emptyset$ for all $\lambda > \lambda_0$. Then there exists $\eps > 0$ such that $\Sigma_\lambda^w = \emptyset$ for all $\lambda > \lambda_0 -\eps$.
\end{prop}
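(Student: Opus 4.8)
The plan is to run the standard continuation step of the moving plane method, adapted to the integral-equation setting. Assume for contradiction that there is no such $\eps$: then there is a sequence $\lambda_k \uparrow \lambda_0$ (or $\lambda_k \to \lambda_0$ from below) with $\Sigma^w_{\lambda_k} \neq \emptyset$, while $\Sigma^w_{\lambda_0} = \emptyset$, i.e. $w(x) \leq w_{\lambda_0}(x)$ for all $x \in \Sigma_{\lambda_0}$. First I would record the monotonicity consequence: since $w \leq w_{\lambda_0}$ on $\Sigma_{\lambda_0}$ and $w$ is not identically equal to its reflection (unless we are already done), the strong form of the comparison — obtained by plugging $x \in \Sigma_{\lambda_0}$ into the difference formula
$$
w_{\lambda_0}(x) - w(x) = \int_{\Sigma_{\lambda_0}} \bigl( G(x-y) - G(x_{\lambda_0}-y) \bigr) \bigl( K(y_{\lambda_0}) e^{w_{\lambda_0}(y)} - K(y) e^{w(y)} \bigr) \, dy
$$
and noting the kernel is strictly positive and $K(y_{\lambda_0}) e^{w_{\lambda_0}(y)} - K(y) e^{w(y)} \geq 0$ with strict inequality on a positive-measure set — gives $w_{\lambda_0}(x) > w(x)$ strictly for every $x > \lambda_0$. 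This strict inequality, together with the decay of $w - w_{\lambda_0}$, is what lets the contradiction-producing sequence be controlled.

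Next I would rerun the integral estimate from Proposition \ref{prop:mov1}, but now keeping careful track of where $\Sigma^w_\lambda$ can live. Repeating the computation there with $\lambda$ in place of $\lambda_0$ yields
$$
\bigl\| \langle x \rangle^{-\alpha} (w - w_\lambda) \bigr\|_{L^1(\Sigma^w_\lambda)} \leq C \sup_{y \in \Sigma^w_\lambda} \Bigl( (|\log y| + 1) \langle y \rangle^{\alpha} K(y) e^{w(y)} \Bigr) \, \bigl\| \langle x \rangle^{-\alpha} (w - w_\lambda) \bigr\|_{L^1(\Sigma^w_\lambda)},
$$
with $C$ independent of $\lambda$ in a neighborhood of $\lambda_0$. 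The point is that if $\Sigma^w_\lambda$ were contained in a region where the prefactor $(|\log y|+1)\langle y\rangle^\alpha K(y) e^{w(y)}$ is $< 1/(2C)$, we would again force $\Sigma^w_\lambda$ to have measure zero, hence be empty by continuity. Because $K(y) e^{w(y)} \to 0$ at infinity (faster than any fixed power, by Assumption \textbf{(A)} and $e^w \in L^\infty$), there is a fixed $R_0 > \lambda_0$ beyond which this smallness holds uniformly for $\lambda$ near $\lambda_0$; hence $\Sigma^w_\lambda \subset [\lambda, R_0]$, a fixed compact set, for all such $\lambda$.

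The final step localizes on this compact set. Pick points $x_k \in \Sigma^w_{\lambda_k} \subset [\lambda_k, R_0]$; after passing to a subsequence, $x_k \to \bar x \in [\lambda_0, R_0]$. By continuity of $(\lambda, x) \mapsto w(x) - w_\lambda(x)$ and $w(x_k) > w_{\lambda_k}(x_k)$, we get $w(\bar x) \geq w_{\lambda_0}(\bar x)$; combined with $\Sigma^w_{\lambda_0} = \emptyset$ this forces $w(\bar x) = w_{\lambda_0}(\bar x)$, so $\bar x = \lambda_0$ is the only possibility (here one uses the strict inequality established in the first step, which rules out $\bar x > \lambda_0$). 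So the $x_k$ accumulate only at $\lambda_0$. To finish, I would examine the derivative: from the integral representation, $\partial_x(w - w_\lambda)$ is continuous, and at $x = \lambda$ one computes $\partial_x(w - w_\lambda)\big|_{x=\lambda} = 2 \partial_x w(\lambda)$; the strict-monotonicity statement $w < w_{\lambda_0}$ on $(\lambda_0,\infty)$ together with $w(\lambda_0) = w_{\lambda_0}(\lambda_0)$ gives $\partial_x w(\lambda_0) < 0$ (a Hopf-type lemma, which here is elementary since we have the explicit kernel $G$ and can differentiate the representation for $w - w_{\lambda_0}$). By continuity this persists: $\partial_x(w - w_\lambda)(x) < 0$ for $(\lambda, x)$ in a neighborhood of $(\lambda_0, \lambda_0)$, which, integrating from the reflection axis, forces $w(x) < w_\lambda(x)$ for $x$ slightly larger than $\lambda$ and $\lambda$ slightly less than $\lambda_0$ — contradicting $x_k \to \lambda_0$ with $x_k \in \Sigma^w_{\lambda_k}$. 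I expect the main obstacle to be this last localization near the axis: making the Hopf-type strict-derivative statement rigorous from the singular integral (justifying differentiation under the integral sign in $w - w_{\lambda_0}$ near $x = \lambda_0$, and extracting the sign) and combining it cleanly with the $L^1$-smallness estimate so that the two mechanisms — decay at infinity and strict monotonicity near the axis — together cover all of $\Sigma^w_\lambda$.
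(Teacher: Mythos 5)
Your Step 1 (the strict inequality $w<w_{\lambda_0}$ on $(\lambda_0,\infty)$) is sound and serves the same purpose as the paper's, though you derive it from the strict positivity of the kernel in the difference representation while the paper uses the singular-integral expression for $(-\Delta)^{1/2}$ as a maximum principle; both work. From there, however, you follow the classical continuation scheme (confine the bad set to a compact region, let points of $\Sigma^w_{\lambda_k}$ accumulate at the reflection axis, contradict a Hopf-type strict-derivative statement), whereas the paper never localizes: it observes that the constants $C_{1,\Sigma^w_\lambda}$, $C_{2,\Sigma^w_\lambda}$ in the $L^1$ contraction estimate are integrals of \emph{fixed integrable weights over the set} $\Sigma^w_\lambda$, and since the strict inequality forces $|\Sigma^w_\lambda|\to 0$ as $\lambda\nearrow\lambda_0$, absolute continuity of the integral makes the contraction constant less than $1$ and kills the set in one stroke, with no compactness and no Hopf lemma needed.

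The genuine gap is your confinement claim $\Sigma^w_\lambda\subset[\lambda,R_0]$. The estimate you invoke bounds $\|\langle x\rangle^{-\alpha}(w-w_\lambda)\|_{L^1(\Sigma^w_\lambda)}$ by $C\sup_{y\in\Sigma^w_\lambda}(\cdots)$ times itself, with the supremum over the \emph{whole} set; if $\Sigma^w_\lambda$ meets $[\lambda,R_0]$ the supremum may be of order one and the estimate yields nothing, and you cannot restrict it to $\Sigma^w_\lambda\cap(R_0,\infty)$ because the $y$-integral on the right still runs over all of $\Sigma^w_\lambda$ — the kernel couples near and far regions. So decay of $Ke^w$ at infinity does not, by this route, exclude points of $\Sigma^w_\lambda$ escaping to infinity, and without confinement the accumulation argument at $\lambda_0$ collapses. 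Secondarily, the Hopf-type statement you defer is in fact true and provable: differentiating the difference representation gives $\partial_x(w_{\lambda_0}-w)(\lambda_0)=\frac{2}{\pi}\int_{\lambda_0}^{\infty}\frac{h(y)}{y-\lambda_0}\,dy>0$ with $h(y)=K(y_{\lambda_0})e^{w_{\lambda_0}(y)}-K(y)e^{w(y)}\geq 0$ not identically zero, the integrand being integrable because $h(\lambda_0)=0$ and $h$ is locally $C^1$ — but this still needs to be written out. The cleanest repair is to drop the localization altogether and use the measure-shrinking mechanism of the paper.
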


\begin{proof}
We divide the proof into the following steps.

\medskip
\textbf{Step 1.} By assumption, we have $w(x) \leq w_\lambda(x)$ for all $x \geq \lambda$ and $\lambda > \lambda_0$. By continuity, we conclude that $w(x) \leq w_{\lambda_0}(x)$ for all $x \geq \lambda_0$. This shows that $\Sigma^w_{\lambda_0} = \emptyset$ holds.

To show that we indeed have $\Sigma^w_\lambda = \emptyset$ for any $\lambda > \lambda_0-\eps$ with some $\eps > 0$, we argue as follows. First, we claim that the strict inequality holds:
\be \label{ineq:w_strict}
w(x) < w_{\lambda_0}(x) \quad \mbox{for all $x > \lambda_0$}.
\ee
We argue by contradiction. Suppose that $w(x) = w_{\lambda_0}(x)$ for some $x  > \lambda_0$. Since $K(x)$ is monotone decreasing in $|x|$, we find
$$
(-\Delta)^{1/2} ( w_{\lambda_0}-w)(x)= \left (K(x_{\lambda_0})-K(x) \right ) e^{w(x)} \geq 0.
$$
On the other hand, by using the singular integral expression for $(-\Delta)^{1/2}$, we conclude
\begin{align*}
(-\Delta)^{1/2} ( w_{\lambda_0}-w)(x) &= \frac{1}{\pi} PV \int_{\R} \frac{(w_{\lambda_0}-w)(x) - (w_{\lambda_0}-w)(y)}{(x-y)^2} \, dy \\
& = -\frac{1}{\pi} PV \int_{\R} \frac{(w_{\lambda_0}-w)(y)}{(x-y)^2} \, dy \\
& = -\frac{1}{\pi} PV \int_{\lambda_0}^\infty \left ( \frac{1}{(x-y)^2} - \frac{1}{(x-y_{\lambda_0})^2} \right ) (w_{\lambda_0}-w)(y) \, dy \leq 0.
\end{align*}
Since we must have equality and $x > \lambda_0$, we deduce that $w_{\lambda_0} - w \equiv 0$ on $\Sigma_{\lambda_0}$. Therefore,
$$
0 = (-\Delta)^{1/2} (w_{\lambda_0} - w)(y) = \left ( K(y_{\lambda_0})- K(y) \right ) e^{w(y)} \quad \mbox{for $y \in \Sigma_{\lambda_0}$}.
$$
Thus $K(y) = K(y_{\lambda_0})$ for every $y \in \Sigma_{\lambda_0}$, which means that $K : \R \to \R$ is symmetric with respect to reflection at $\{ y = \lambda_0\}$ with some $\lambda_0 > 0$. Since $K$ is also symmetric with respect to the origin by Assumption \textbf{(A)}, we conclude that $K$ is constant. But this contradicts Assumption \textbf{(A)}. This completes our proof of claim \eqref{ineq:w_strict}.

\medskip
\textbf{Step 2.}
From the proof of Proposition \ref{prop:mov1} we recall the  estimate
$$
\left \| \langle x \rangle^{-\alpha} (w-w_\lambda) \right \|_{L^1(\Sigma^w_\lambda)} \leq C_{\Sigma^w_\lambda}  \sup_{y \geq \lambda} \left ( (\left | \log y \right | + 1) \langle y \rangle^{\alpha} K(y) e^{w(y)} \right )  \left \| \langle x \rangle^{-\alpha} (w-w_\lambda) \right \|_{L^1(\Sigma_\lambda^w)} .
$$
Now, in view of \eqref{ineq:w_strict}, we conclude that the set
$$
\overline{\Sigma^w_{\lambda_0}} := \{ x \in \Sigma_{\lambda_0} : w(x) \geq w_{\lambda_0}(x) \} = \{ x = \lambda_0\}
$$
has Lebesgue measure zero. Since $\lim_{\lambda \nearrow \lambda_0} \Sigma^w_{\lambda} \subset \overline{\Sigma^w_{\lambda_0}}$ and, by inspecting the expression for $C_{\Sigma^w_\lambda}$ (see \eqref{def:C1} and \eqref{def:C2}), we deduce that $C_{\Sigma^w_\lambda} \to 0$ as $\lambda \nearrow \lambda_0$. Thus for some $\eps > 0$ sufficiently small, we find that
$$
\left \| \langle x \rangle^{-\alpha} (w-w_\lambda) \right \|_{L^1(\Sigma^w_\lambda)} \leq \frac{1}{2} \left \| \langle x \rangle^{-\alpha} (w-w_\lambda) \right \|_{L^1(\Sigma^w_\lambda)} \quad \mbox{for $\lambda > \lambda_0 -\eps$},
$$
which implies that $\Sigma^w_\lambda$ as Lebesgue measure zero and hence is empty for $\lambda > \lambda_0-\eps$ by the continuity of $w-w_\lambda$.

This completes the proof of Proposition \ref{prop:mov2}.
\end{proof}

Finally, we show the following closedness property.

\begin{prop} \label{prop:mov3}
If $\Sigma^w_{\lambda} = \emptyset$ for all $\lambda > \lambda_n$ and $\lambda_n \to \lambda_*$, then $\Sigma^w_{\lambda} = \emptyset$ for all $\lambda > \lambda_*$. 
\end{prop}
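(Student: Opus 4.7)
The plan is to deduce the conclusion directly from the hypothesis by a soft limit argument. The key observation is that the statement only asserts emptiness of $\Sigma^w_\lambda$ on the open set $\{\lambda > \lambda_*\}$ (not at $\lambda = \lambda_*$ itself), so no continuity of $w$ with respect to the reflection parameter is actually required, and no new PDE input beyond what was already used in Propositions~\ref{prop:mov1} and \ref{prop:mov2} is needed.

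Concretely, I would fix an arbitrary $\lambda > \lambda_*$. Since $\lambda_n \to \lambda_*$ and the gap $\lambda - \lambda_* > 0$, an index $n$ can be chosen large enough that $\lambda_n < \lambda$. Applying the standing hypothesis to this particular $n$ gives $\Sigma^w_\mu = \emptyset$ for every $\mu > \lambda_n$, and specializing to $\mu = \lambda$ yields the desired $\Sigma^w_\lambda = \emptyset$. Since $\lambda > \lambda_*$ was arbitrary, the proof is complete.

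Accordingly, there is no genuine obstacle in this step; Proposition~\ref{prop:mov3} is the closedness complement to the starting condition in Proposition~\ref{prop:mov1} and to the openness (continuation) step in Proposition~\ref{prop:mov2}. The three together imply, via a standard connectedness argument on $[0,\infty)$, that the set
$$
S := \bigl\{\mu \geq 0 : \Sigma^w_\lambda = \emptyset \text{ for every } \lambda > \mu \bigr\}
$$
extends all the way down to $\mu = 0$. This yields $w(x) \leq w(-x)$ for every $x \geq 0$; the symmetric argument applied to reflections about planes $\lambda < 0$ gives the reverse inequality, and hence the even symmetry $w(x) = w(-x)$ claimed in item~(ii) of Theorem~\ref{thm:main}. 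The monotonicity statement in the same item follows from the strict inequality \eqref{ineq:w_strict} established in the proof of Proposition~\ref{prop:mov2}, applied at intermediate values of $\lambda$ between $0$ and a given $|x|$.
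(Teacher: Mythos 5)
Your argument is exactly the paper's: pick $\lambda>\lambda_*$, choose $n$ with $\lambda_n<\lambda$ (possible since $\lambda_n\to\lambda_*<\lambda$), and invoke the hypothesis for that $n$. The proof is correct and coincides with the one in the paper; the additional remarks on how Propositions \ref{prop:mov1}--\ref{prop:mov3} combine are accurate but not part of this step.
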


\begin{proof}
This property holds trivially by construction. Indeed, let $\lambda > \lambda_*$ be arbitrary. Since $\lambda_n \to \lambda_*$, there exists $n \in \N$ such that $\lambda > \lambda_n$ and hence $\Sigma^w_{\lambda} = \emptyset$.
\end{proof}

\subsection*{Proof of Theorem \ref{thm:main} (ii): Symmetry}
We are now ready to give the proof of Theorem \ref{thm:main}, item (ii). By combining Propositions \ref{prop:mov1}--\ref{prop:mov3} and using a standard open-closed argument, we deduce that $\Sigma^w_\lambda = \emptyset$ for all $\lambda > 0$. By continuity of $w$ and passing to the limit $\lambda \to 0^+$, we thus obtain
$$
w(x) \leq w(-x) \quad \mbox{for all $x \geq 0$}.
$$
On the other hand, replacing $w(x)$ by $\tilde{w}(x)=w(-x)$ yields another solution of \eqref{eq:Liouville}. By re-running the moving plane argument above, we find the opposite inequality $w(-x) \leq w(x)$ for $x \geq 0$. This shows that $w(x) = w(-x)$ for all $x \in \R$.   

Finally, we show that $w$ is a decreasing function of $|x|$. Since $w$ is even, it suffices to prove that $w(y) \leq w(x)$ for $y >x \geq 0$. In fact, let $y > x \geq 0$ and define $\lambda = \frac{x+y}{2} > 0$, which directly yields $x= 2 \lambda -y=y_\lambda$ and $y \in \Sigma_\lambda$. Since $\Sigma^w_\lambda = \emptyset$, we see that $w(y) \leq w_\lambda(y) = w(y_\lambda)=w(x)$.  

The proof of Theorem \ref{thm:main} (ii) is now complete. \hfill $\Box$

\section{Compactness and A-Priori Estimates}

In this section, we derive results which will be used to prove Theorem \ref{thm:main2} (iii) about existence of solutions. Some estimates will also be later needed in the rest of the proof of Theorem \ref{thm:main2} below.
\subsection{Preliminaries}
\label{subsec:prelim}
We start by recasting the nonlocal Liouville equation \eqref{eq:Liouville} into a more convenient form as follows. Recall that we will always assume that the $Q$-curvature function $K(x)> 0$ satisfies Assumption \textbf{(A)}. 

Suppose that $w \in L_{\frac 1 2}(\R)$ is a solution of \eqref{eq:Liouville} with $K e^w \in L^1(\R)$. We define the positive function $\rho : \R \to \R_{>0}$ by setting
\be 
\rho(x) = K(x) e^{w(x)} .
\ee
From Lemma \ref{lem:reg_w} and \ref{lem:w_int_log} we see that $\rho$ is of class $C^1$ and $\rho \in L^1(\R) \cap L^\infty(\R)$. Furthermore, by applying the Hilbert transform $\Hil$ on both sides of \eqref{eq:Liouville} and using that $\Hil (-\Delta)^{\frac 1 2} = -\pt_x$, we obtain 
\be 
-\pt_x w = \Hil(\rho).
\ee
Since $w$ is $C^1$, this is equivalent to the integral equation
\be
w(x) = w(0) - \int_0^x \Hil(\rho)(y) \, dy .
\ee
Using that $\rho(x) = K(x) e^{w(x)}$ and $K(x)> 0$, we can rewrite this equation as
\be
\rho(x) = \rho(0) \frac{K(x)}{K(0)} e^{-\int_0^x  \Hil(\rho)(y) \, dy } .
\ee
Next, by using that $\rho(x) > 0$ is a positive $C^1$-function, we can introduce the positive $C^1$-function $v : \R \to \R_{>0}$ with $v(x) = \sqrt{\rho(x)}$. By taking the square root in the previous equation, we obtain
\be \label{eq:v_int}
\boxed{v(x) = v(0) \sqrt{\frac{K(x)}{K(0)}} e^{-\frac{1}{2} \int_0^x \Hil(v^2)(y) \, dy }}
\ee
By differentiating, we readily check that $v$ solves the nonlinear equation
\be \label{eq:v_diff}
\pt_x v - (\pt_x \log \sqrt{K}) v + \frac{1}{2} \Hil(v^2) v = 0.
\ee
We remark that, for the special case $K(x)=e^{-x^2}$, we retrieve the ground state soliton equation for the harmonic Calogero--Moser DNLS. Of course, the following analysis will allow for more general $K(x)$ that satisfy Assumption \textbf{(A)}. We also note that $v = \sqrt{\rho} \in L^2(\R)$. Since $w$ is even and decreasing in $|x|$, the same holds for $v(x) = \sqrt{\rho(x)} = \sqrt{K(x) e^{w(x)}}$ due to the symmetric-decreasing property of $K(x)$. Since $v=v^*$ is also symmetric-decreasing, we deduce $e^{-\frac{1}{2} \int_0^x \Hil(v^2) \,dy} \leq 1$ for all $x \in \R$ by Lemma \ref{lem:hilbert}. Now, a glimpse at \eqref{eq:v_diff} yields the bound
\begin{align*}
\| \pt_x v \|_{L^2} & \lesssim |v(0)| \| (\pt_x \sqrt{K}) e^{-\frac{1}{2} \int_0^x \Hil(v^2)} \|_{L^2} + \frac{1}{2} \| v \Hil(v^2) \|_{L^2} \\
& \lesssim |v(0)|\| \pt_x \sqrt{K} \|_{L^2} + \| v \|_{L^\infty} \| v^2 \|_{L^2}  \lesssim |v(0)| ( 1 + \| v \|_{L^4}^2) < +\infty,
\end{align*}
by also using $v = \sqrt{\rho}  \in L^4(\R)$ in view of the fact that $\rho \in L^1(\R) \cap L^\infty(\R)$. This show that $v \in H^1(\R)$ holds. Moreover, by Lemma \ref{lem:w_int_log}, we notice that 
$$
\int_\R \log (1 + |x|) |v(x)|^2 \, dx < +\infty.
$$

On the other hand, it is easy to see that any solution $v \in H^1(\R)$ with $v(0)> 0$ of \eqref{eq:v_int} is automatically $C^1$ and gives rise to a solution $w\in L_{1/2}(\R)$ of \eqref{eq:Liouville} by setting $w = \log(K^{-1} v^2)$ satisfying the finiteness condition $\int_{\R} K e^w \, dx = \int_{\R} v^2 \,dx < +\infty$. In summary and view of the symmetry result shown in Theorem \ref{thm:main} (ii), we have established the following result.

\begin{prop} \label{prop:wv_equiv}
A function $w \in L_{\frac 1 2}(\R)$ solves \eqref{eq:Liouville} with $K e^w \in L^1(\R)$ if and only if $v = \sqrt{K e^w} \in H^1(\R)$ solves \eqref{eq:v_int}. Moreover, we have that 
$$
\int_{\R} \log(1+|x|) K(x) e^{w(x)} \, dx = \int_{\R} \log(1+|x|) |v(x)|^2 \, dx < +\infty.
$$
Finally, every solution $v \in H^1(\R)$ of \eqref{eq:v_int} must be symmetric-decreasing, i.\,e., it holds that $v=v^*$. 
\end{prop}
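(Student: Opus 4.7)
The proof amounts to organizing the manipulations already carried out in Subsection \ref{subsec:prelim} into the two implications of the equivalence, together with short arguments for the symmetry and the log-integrability. My plan is to treat the two implications separately and then read off the remaining claims from Theorem \ref{thm:main}(ii) and Lemma \ref{lem:w_int_log}.

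For the forward implication ($w \Rightarrow v$), I would simply invoke the chain of identities already laid out in the text: using Lemmas \ref{lem:reg_w} and \ref{lem:w_int_log} to get $\rho = K e^w \in C^1(\R) \cap L^1(\R) \cap L^\infty(\R)$, applying the Hilbert transform $\Hil$ to both sides of \eqref{eq:Liouville} (legitimate since $\rho \in L^2(\R)$), and using $\Hil \hD = -\pt_x$ to obtain $-\pt_x w = \Hil(\rho)$; integrating from $0$, exponentiating and taking the square root then gives \eqref{eq:v_int}. The $H^1$-bound for $v$ is the estimate $\| \pt_x v \|_{L^2} \lesssim |v(0)| (1 + \|v\|_{L^4}^2)$ displayed above, which I would justify using $\sqrt K \in H^1(\R)$ (Lemma \ref{lem:K_prop}) together with $v \in L^2 \cap L^4$, coming from $\rho \in L^1 \cap L^\infty$.

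The reverse implication ($v \Rightarrow w$) requires a bit more care. Given $v \in H^1(\R)$ with $v(0) > 0$ solving \eqref{eq:v_int}, I would first note that the right-hand side of \eqref{eq:v_int} is manifestly strictly positive, so $v > 0$ everywhere on $\R$. Since $v \in H^1 \hookrightarrow L^\infty$, $v^2 \in L^2$ and thus $\Hil(v^2) \in L^2$, so $x \mapsto \int_0^x \Hil(v^2)(y)\,dy$ is continuous; combining this with $\sqrt K \in C^{0,1/2}$ (from Lemma \ref{lem:K_prop}) and bootstrapping through \eqref{eq:v_int} gives $v \in C^1(\R)$. I would then set $w := \log(v^2/K)$, which is well defined since $v, K > 0$, and take the logarithm of \eqref{eq:v_int} to obtain $w(x) = w(0) - \int_0^x \Hil(v^2)(y)\,dy$. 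Cauchy--Schwarz gives $|w(x)| \lesssim 1 + |x|^{1/2} \|\Hil(v^2)\|_{L^2}$, which places $w$ in $L_{1/2}(\R)$. Differentiating and using $\hD = \Hil \pt_x$ (a consequence of $\Hil^2 = -\mathrm{Id}$ and $\Hil \hD = -\pt_x$) yields $\hD w = v^2 = K e^w$, and the finiteness condition follows from $\int_\R K e^w \,dx = \|v\|_{L^2}^2 < +\infty$.

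The remaining claims are essentially automatic. The symmetric-decreasing property of $v$ follows because Theorem \ref{thm:main}(ii) ensures that $w$ is even and nonincreasing in $|x|$, and $K$ shares this property by Assumption \textbf{(A)}, so $v = \sqrt{K e^w}$ inherits it. The logarithmic-integrability identity is just the last assertion of Lemma \ref{lem:w_int_log} rewritten in the variable $v$. The only mildly delicate step I foresee is the $C^1$-bootstrap in the reverse direction, but I do not anticipate any substantive obstacle, since the regularity of $K$ from Assumption \textbf{(A)} is more than enough to close it.
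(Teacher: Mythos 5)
Your proposal is correct and follows essentially the same route as the paper, whose proof of this proposition is precisely the derivation laid out in Subsection \ref{subsec:prelim} (Hilbert transform of \eqref{eq:Liouville}, integration and exponentiation to reach \eqref{eq:v_int}, the $H^1$-bound on $\pt_x v$, the symmetry from Theorem \ref{thm:main}(ii), and the log-integrability from Lemma \ref{lem:w_int_log}). If anything, you supply more detail than the paper on the reverse implication (positivity of $v$, the $C^1$ regularity via $\Hil(v^2)\in H^1\subset C^0$, and the Cauchy--Schwarz bound placing $w$ in $L_{1/2}(\R)$), which the paper dismisses as ``easy to see.''
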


Based on the discussion above, we make the following definitions. We let $X$ denote the real Hilbert space of real-valued and even functions on $\R$ given by
$$
X := \left \{ u : \R \to \R \mid  \mbox{$u(-x) = u(x)$ and $\| u \|_{X} < +\infty$}  \right \},
$$
where we define the norm via
$$
 \| u \|_X^2 := \|u \|_{H^1}^2 + \int_{\R}  \log (1+|x|) |u(x)|^2 \, dx.
$$ 
Moreover, we define the set
$$
X^* := \left \{ u \in X \mid u = u^*  \right \}
$$
which is the set of symmetric-decreasing functions that belong to the space $X$. We note that $X^*$ is a {\em closed and convex subset} of $X$.

\subsection{Compactness, A-Priori Bounds and Existence}
 For $\lambda > 0$ and $u \in X^*$ given, we set
$$
T_\lambda(u)(x) := \lambda \sqrt{K(x)} e^{-\frac{1}{2} \int_0^x \Hil(u^2)(y) \, dy}. 
$$
In view of the Proposition \ref{prop:wv_equiv} and the symmetry result in Theorem \ref{thm:main2} (ii), we note that
\be \label{eq:Tv_equiv}
T_\lambda(v) = v \quad \Longleftrightarrow \quad \mbox{$w=\log(K^{-1} v^2)$ solves \eqref{eq:Liouville} with $\int_{\R} K e^w = \int_\R v^2$},
\ee
where, of course, we always assume that $K$ satisfies Assumption \textbf{(A)}. We record the following fact.

\begin{lem}
The map $T_\lambda : X^* \to X^*$ is well-defined and continuous.
\end{lem}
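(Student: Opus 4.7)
The plan is to verify the three defining properties of $X^*$ for $T_\lambda(u)$ by direct computation, and then establish continuity using the one-dimensional Sobolev embedding $H^1(\R) \hookrightarrow L^\infty(\R)$, the $L^2$-boundedness of the Hilbert transform $\Hil$, and dominated convergence.

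For well-definedness, since $u \in X^*$ is even, $u^2$ is even and so $\Hil(u^2)$ is odd; its primitive $x \mapsto \int_0^x \Hil(u^2)(y)\,dy$ is therefore even, which combined with evenness of $K$ yields evenness of $T_\lambda(u)$. For the symmetric-decreasing property I would differentiate
$$
\pt_x T_\lambda(u)(x) = \lambda\bigl[\pt_x\sqrt{K}(x) - \tfrac{1}{2}\sqrt{K(x)}\,\Hil(u^2)(x)\bigr]\, e^{-\frac{1}{2}\int_0^x \Hil(u^2)(y)\,dy},
$$
and note that for $x \geq 0$ both $\pt_x\sqrt{K}(x) \leq 0$ (since $\sqrt{K}$ is symmetric-decreasing by Assumption \textbf{(A)}) and $\Hil(u^2)(x) \geq 0$ (by Lemma \ref{lem:hilbert} applied to the symmetric-decreasing function $u^2$), hence $\pt_x T_\lambda(u)(x) \leq 0$. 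Lemma \ref{lem:hilbert} further gives $\int_0^x \Hil(u^2) \geq 0$ for every $x \in \R$ (the case $x < 0$ following from oddness of $\Hil(u^2)$), so the exponential factor is pointwise bounded by $1$ and we obtain the majorization $|T_\lambda(u)(x)| \leq \lambda \sqrt{K(x)}$. Combined with $\sqrt{K} \in L^2(\R)$ (Lemma \ref{lem:K_prop}) and the decay $K(x) \lesssim \langle x \rangle^{-1-2\delta}$, this yields both $T_\lambda(u) \in L^2$ and $\int_\R \log(1+|x|)|T_\lambda(u)|^2\,dx < +\infty$. To obtain $\pt_x T_\lambda(u) \in L^2$, I would use $|\pt_x T_\lambda(u)| \leq \lambda |\pt_x\sqrt{K}| + \tfrac{\lambda}{2}\sqrt{K}\,|\Hil(u^2)|$ together with $\pt_x\sqrt{K} \in L^2$ and the bound $\|\sqrt{K}\,\Hil(u^2)\|_{L^2} \leq \|\sqrt{K}\|_{L^\infty}\|u^2\|_{L^2} \lesssim \|u\|_{L^\infty}\|u\|_{L^2} \lesssim \|u\|_{H^1}^2$.

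For continuity, suppose $u_n \to u$ in $X$. Sobolev embedding forces $u_n \to u$ in $L^\infty$, so $u_n^2 \to u^2$ in $L^2$ and hence $\Hil(u_n^2) \to \Hil(u^2)$ in $L^2$. By Cauchy--Schwarz, $\int_0^x \Hil(u_n^2) \to \int_0^x \Hil(u^2)$ pointwise, and the exponential factors therefore converge pointwise while remaining uniformly bounded by $1$. Combined with the pointwise domination $|T_\lambda(u_n)| \leq \lambda\sqrt{K}$, dominated convergence gives $T_\lambda(u_n) \to T_\lambda(u)$ in both $L^2$ and the log-weighted $L^2$ space. For the $H^1$-part I would split $\pt_x T_\lambda(u_n) - \pt_x T_\lambda(u)$ into a contribution proportional to $\pt_x\sqrt{K}$ (controlled by dominated convergence with dominant $2|\pt_x\sqrt{K}|$) plus a contribution proportional to $\sqrt{K}$, which I would further decompose as $\sqrt{K}[\Hil(u_n^2) - \Hil(u^2)]\cdot e_n + \sqrt{K}\,\Hil(u^2)\cdot[e_n - e]$, where $e_n, e$ denote the respective exponential factors. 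The first summand vanishes in $L^2$ via $\|\sqrt{K}\|_{L^\infty}\|\Hil(u_n^2) - \Hil(u^2)\|_{L^2} \to 0$, while the second vanishes by dominated convergence with $L^2$-dominant $2\sqrt{K}\,|\Hil(u^2)|$.

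The main technical obstacle is the $L^2$-convergence of the derivative: the product $\sqrt{K}\,\Hil(u^2)$ has no obvious uniform $L^2$-dominant in $n$, so the splitting above is essential, letting each convergence mechanism --- operator continuity of $\Hil$ on $L^2$ on the one hand, pointwise convergence of the exponential on the other --- act on its own summand.
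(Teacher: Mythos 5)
Your proposal is correct and follows essentially the same route as the paper: the pointwise bound $0<e^{-\frac12\int_0^x\Hil(u^2)}\le 1$ from Lemma \ref{lem:hilbert} drives both the majorization $|T_\lambda(u)|\le\lambda\sqrt K$ and the dominated-convergence argument for continuity, with the derivative handled by the same product-rule splitting. The only cosmetic difference is that you verify the symmetric-decreasing property by checking the sign of $\pt_x T_\lambda(u)$ on $[0,\infty)$, whereas the paper observes directly that $T_\lambda(u)$ is a product of the two nonnegative symmetric-decreasing functions $\lambda\sqrt K$ and $\psi_u$; both are valid.
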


\begin{proof}
We first show that $T_\lambda$ maps $X^*$ into itself. Indeed, let $\lambda >0$ and $u \in X^*$ be given. By Lemma \ref{lem:hilbert}, we deduce that
$$
\psi_u(x) := e^{-\frac{1}{2} \int_0^x \Hil(u^2)(y) \, dy}
$$
is a symmetric-decreasing function with  $0 < \psi_u(x) \leq 1$ for all $x \in \R$. Using Assumption \textbf{(A)} we find that $T_\lambda(u) = \lambda \sqrt{K} \psi_u$ is symmetric-decreasing and belongs to $L^2(\R)$. Next, we easily verify that $\pt_x T_\lambda(u) \in L^2(\R)$ by using Assumption \textbf{(A)} as well as $\Hil(u^2) \in H^1(\R) \subset L^\infty(\R)$.  Finally, by using that $0< \psi_u (x) \leq 1$ again together with the decay properties of $K$, we directly see that $\log (1 + |\cdot|) |T_\lambda(u)|^2 \in L^1(\R)$, whence it follows that $T_\lambda(u) \in X^*$ holds.

Next, we show that $T_\lambda : X^* \to X^*$ is a continuous map. Suppose that $u_k \in X^*$ satisfies $u_k \to u$ in $X$. We readily check that $\psi_{u_k}(x) \to \psi_u(x)$ for almost every $x \in \R$. By dominated convergence and $0 < \psi_{u_k}(x) \leq 1$, we deduce that $T_\lambda(u_k) \to T_\lambda(u)$ in $X$. 
\end{proof}

Next, we establish the following local Lipschitz estimate.
%To prove  that $T_\lambda : X^* \to X^*$ is a compact map, we argue as follows. Let $(u_k)_{k=1}^\infty \subset X^*$ be a bounded sequence in $X$. Since $0 < \psi_{u_k}(x) \leq 1$ for all $x \in \R$, we have the uniform pointwise bound
%$$
%0 < T_\lambda(u_k)(x) \leq \lambda \sqrt{K(x)} \leq \lambda C \lambda \langle x \rangle^{-\frac{1}{2}-\delta},
%$$
%with some constants $C > 0$ and $\delta > 0$ by Assumption \textbf{(A)}. Likewise, we find 
%\begin{align*}
%| \pt_x T_\lambda(u_k)(x) |& = \lambda \left |   (\pt_x \sqrt{K}) \psi_{u_k} + \sqrt{K} \pt_x \psi_{u_k} \right | \\
%& \leq C \lambda \left ( |\pt_x \sqrt{K(x)} | + \psi_{u_k}(x) \| \Hil(u_k^2) \|_{L^\infty} \sqrt{K(x)}  \right ) \leq C \lambda \langle x \rangle^{-\frac{1}{2} - \delta}
%\end{align*}
%using that $\| \Hil (u_k^2) \|_{L^\infty} \leq C \| \Hil(u_k^2) \|_{H^1} \leq C \|u_k^2\|_{H^1} \leq C$ by the Sobolev embedding $H^1(\R) \subset L^\infty(\R)$ together with the fact that $\Hil : H^1(\R) \to H^1(\R)$ is bounded and that the sequence $(u_k)$ is uniformly bounded in $H^1(\R) \subset X$.
%\end{proof}

\begin{lem} \label{lem:T_contract}
The map $T_\lambda : X^* \to X^*$ satisfies the estimate
$$
\| T_\lambda(v) - T_\lambda(u) \|_X \lesssim \lambda (\| u \|_X+ \|v \|_X) (1 + \|u\|_X^2 + \|v \|_X^2)  \|u-v \|_{L^2} .
$$

\end{lem}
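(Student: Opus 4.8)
The plan is to write $T_\lambda(v)-T_\lambda(u)=\lambda\sqrt{K}\,(\psi_v-\psi_u)$, where $\psi_g(x):=e^{-\frac12\int_0^x\Hil(g^2)(y)\,dy}$ for $g\in X^*$, and to reduce everything to one pointwise estimate on $\psi_v-\psi_u$. By Lemma~\ref{lem:hilbert} the exponents $\frac12\int_0^x\Hil(u^2)$ and $\frac12\int_0^x\Hil(v^2)$ are nonnegative, so the elementary inequality $|e^{-a}-e^{-b}|\le|a-b|$ for $a,b\ge0$ gives
$$
|\psi_v(x)-\psi_u(x)|\le\tfrac12\Big|\int_0^x\Hil(v^2-u^2)(y)\,dy\Big|.
$$
Integrating the principal value in $y$ — exactly as in the passage from the integral equation for $w$ to its logarithmic form in Lemma~\ref{lem:w_int_log} — one has $\int_0^x\Hil(h)(y)\,dy=\frac1\pi\int_\R\log\big|\tfrac{x-t}{t}\big|\,h(t)\,dt$ for $h:=v^2-u^2\in L^1\cap L^2$, and Cauchy--Schwarz, extracting the factor $\|v-u\|_{L^2}$ from $h=(v-u)(v+u)$, yields
$$
|\psi_v(x)-\psi_u(x)|\le\frac1{2\pi}\,A(x)^{1/2}\,\|v-u\|_{L^2},\qquad A(x):=\int_\R\Big(\log\Big|\tfrac{x-t}{t}\Big|\Big)^2(v+u)^2(t)\,dt .
$$

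The heart of the proof is the claim $A(x)\lesssim\big(1+\log(1+|x|)\big)^2\|u+v\|_X^2$. For $|x|$ bounded this follows from $H^1\hookrightarrow L^\infty$ near the two logarithmic singularities $t=0$ and $t=x$, together with the far-field bound $(v+u)^2(t)\le\|v+u\|_{L^2}^2/(2|t|)$ coming from the symmetric-decreasing property (Proposition~\ref{prop:wv_equiv}). For $|x|\ge2$ I split the $t$-integral into $|t|\le|x|/2$ (where $\log|x-t|=\log|x|+O(1)$, and the singularity at $t=0$ is handled by $H^1\hookrightarrow L^\infty$ near the origin and by the logarithmic weight in $\|\cdot\|_X$ for $1\le|t|\le|x|/2$), $|x|/2\le|t|\le2|x|$ (where $\log|t|=\log|x|+O(1)$, the singularity at $t=x$ is treated as before, and $(v+u)^2(t)\lesssim\|v+u\|_{L^2}^2/|x|$), and $|t|\ge2|x|$ (where the cancellation $\big|\log|1-x/t|\big|\le2|x|/|t|$ gives $\int_{|t|\ge2|x|}(\cdots)(v+u)^2\lesssim\|v+u\|_{L^2}^2$). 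Hence
$$
|\psi_v(x)-\psi_u(x)|\lesssim\big(1+\log(1+|x|)\big)\,\|u+v\|_X\,\|u-v\|_{L^2}\qquad\text{for all }x\in\R .
$$

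With this pointwise bound the three pieces of $\|T_\lambda(v)-T_\lambda(u)\|_X$ are routine. For the $L^2$ and the $\log$-weighted $L^2$ parts, $\lambda^2\int K(x)\big(1+\log(1+|x|)\big)|\psi_v-\psi_u|^2\,dx\lesssim\lambda^2\|u+v\|_X^2\|u-v\|_{L^2}^2$, since $K(x)\lesssim\langle x\rangle^{-1-2\delta}$ absorbs the factor $(1+\log(1+|x|))^3$. For the $H^1$ part, differentiating and using $\pt_x\psi_g=-\frac12\Hil(g^2)\psi_g$ gives
$$
\pt_x\big(T_\lambda(v)-T_\lambda(u)\big)=\lambda(\pt_x\sqrt K)(\psi_v-\psi_u)-\tfrac\lambda2\sqrt K\,\Hil(v^2)(\psi_v-\psi_u)-\tfrac\lambda2\sqrt K\,\Hil(v^2-u^2)\,\psi_u ,
$$
and one bounds in $L^2$: the first term by the pointwise estimate above and $|\pt_x\sqrt K|\lesssim\langle x\rangle^{-1/2-\delta}$ (Assumption~\textbf{(A)}); the second by $\|\Hil(v^2)\|_{L^\infty}\lesssim\|v^2\|_{H^1}\lesssim\|v\|_X^2$ and the same pointwise estimate; the third by the $L^2$-boundedness of $\Hil$, by $0<\psi_u\le1$ and $\sqrt K\in L^\infty$, and by $\|v^2-u^2\|_{L^2}\le\|u+v\|_{L^\infty}\|u-v\|_{L^2}\lesssim\|u+v\|_X\|u-v\|_{L^2}$. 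Collecting the estimates and using $\|u+v\|_X\le\|u\|_X+\|v\|_X$ and $\|v\|_X^2\le1+\|u\|_X^2+\|v\|_X^2$ gives the claimed inequality. The only genuinely delicate point is the estimate on $A(x)$: one must extract a factor growing no faster than a power of $\log|x|$ — so that the decay of $K$ can absorb it — while keeping $u-v$ measured only in $L^2$, and this is precisely what the far-field cancellation $\log|1-x/t|=O(|x/t|)$ and the symmetric-decreasing pointwise decay of $u+v$ make possible.
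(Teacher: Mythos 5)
Your proof is correct and follows the same overall architecture as the paper's: write $T_\lambda(v)-T_\lambda(u)=\lambda\sqrt K(\psi_v-\psi_u)$, use Lemma \ref{lem:hilbert} to get nonnegativity of the exponents and hence $|e^{-a}-e^{-b}|\le|a-b|$, establish a pointwise bound on $\psi_v-\psi_u$ with a slowly growing weight that $\sqrt K$ absorbs, and then treat the three pieces of the $X$-norm, with the derivative split exactly as you do. The one place you diverge is the pointwise bound itself: the paper simply applies H\"older on $[0,x]$ together with the $L^p$-boundedness of $\Hil$ for $p$ close to $1$, obtaining $|\psi_u(x)-\psi_v(x)|\le C|x|^{1/q}(\|u\|_X+\|v\|_X)\|u-v\|_{L^2}$ with $q\gg1$, and the arbitrarily small power $|x|^{1/q}$ is just as absorbable by $\langle x\rangle^{-1/2-\delta}$ as your logarithm. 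Your route — converting $\int_0^x\Hil(h)$ into the kernel $\frac1\pi\log|\frac{x-t}{t}|$ and running Cauchy--Schwarz with a three-region splitting — is valid (including the point where you must pull one factor $\log(1+|x|)$ out of $(\log(1+|t|))^2$ on $1\le|t|\le|x|/2$ so that the single logarithmic weight in $\|\cdot\|_X$ suffices), and it yields a sharper growth rate, but it is considerably more laborious than needed for the stated estimate; the only thing it buys is logarithmic rather than $|x|^{1/q}$ growth, which plays no role downstream.
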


\begin{proof}
We first establish some auxiliary estimate as follows. For $f \in X^*$, we use again the short-hand notation
$$
\psi_f(x) = e^{-\frac{1}{2} \int_0^x \Hil(f^2)(y) \, dy }.
$$
By Lemma \ref{lem:hilbert} it holds that $0 < \psi_{f}(x) \leq 1$ for any $f \in X^*$. Furthermore, by using the boundedness of the Hilbert transform $\Hil$ on $L^p(\R)$ for any $p \in (1, \infty)$, we deduce, for any $x \in \R$, the pointwise bound
\begin{align*}
\left | \psi_u(x) - \psi_v(x) \right | & \leq \frac{1}{2} \left | \int_0^x  \Hil(u^2) \,dy - \int_0^x \Hil(v^2) \, dy  \right | \leq \frac{1}{2} \int_0^{|x|} \left | \Hil(u^2-v^2)(y) \right | dy \\
& \leq \frac{1}{2} |x|^{1/q} \| \Hil(u^2 -v^2) \|_{L^p} \leq C |x|^{1/q} \| u^2-v^2 \|_{L^p} \\
& \leq C |x|^{1/q} \|u+v \|_{L^r} \| u-v \|_{L^2} \leq C |x|^{1/q} ( \|u\|_X + \| v\|_X) \|u-v \|_{L^2}
\end{align*}
with some constant $C=C(p)>0$ and $1/q+1/p=1$, $1 < p \leq 2$, $1/2+1/r=1/p$, $r > 2$. Since $|x|^{1/q} \sqrt{K} \in L^2(\R)$ by Assumption \textbf{(A)}, provided we take $q \gg 1$ sufficiently large (and thus $p > 1$ is sufficiently close to 1), we obtain
\begin{align*}
\| T_\lambda(u) - T_\lambda(v) \|_{L^2} & \leq C \lambda  ( \|u\|_X + \| v\|_X)  \| |x|^{1/q} \sqrt{K} \|_{L^2} \|u-v \|_{L^2} \\
& \leq C \lambda  ( \|u\|_X + \| v\|_X)   \|u-v \|_{L^2} .
\end{align*}
Likewise, we use $\sqrt{\log (1+ |x)} |x|^{1/q} \sqrt{K} \in L^2(\R)$ for $q \gg 1$ to conclude
$$
\| \sqrt{\log{(1+|\cdot|)}} ( T_\lambda(u) - T_\lambda(v) ) \|_{L^2}  \leq C \lambda  ( \|u\|_X + \| v\|_X)   \|u-v \|_{L^2} .
$$
Next, we notice that 
$$
\pt_x T_\lambda(u) = \lambda \left ( (\pt_x \sqrt{K}) \psi_u - \frac{1}{2} \sqrt{K} \psi_u  \Hil(u^2) \right )  .
$$
Using that $|x|^{1/q} \sqrt{K} \in L^2(\R)$, $\sqrt{K} \in L^\infty(\R)$, and $|\psi_u|, |\psi_v | \leq 1$, we find
\begin{align*}
\left \| \pt_x ( T_\lambda(u) -  T_\lambda(v) ) \right \|_{L^2} & \leq C \lambda \left ( \| (\pt_x \sqrt{K}) ( \psi_u  - \psi_v ) \|_{L^2} + \| \sqrt{K} (\psi_u \Hil(u^2) - \psi_v \Hil(v^2)) \|_{L^2} \right ) \\
& \leq C \lambda ( \|u\|_X + \| v\|_X  ) \left (1 +    \| \Hil(u^2) \|_{L^\infty} + \| \Hil(v^2) \|_{L^\infty}  \right ) \|u-v \|_{L^2} \\
& \quad + C \lambda (\| u \|_X + \|v \|_X  ) \| \Hil(u^2)- \Hil(v^2) \|_{L^2}  \\
& \leq C \lambda (\| u \|_X + \|v \|_X) (1 + \|u \|_X^2 + \| v \|_X^2) \| u-v \|_{L^2}.
\end{align*}
Here we also used that, by Sobolev embeddings and the boundedness of $\Hil$ on $H^1(\R)$ that we have
$$
\| \Hil(f^2) \|_{L^{\infty}} \leq C \| \Hil(f^2) \|_{H^1} \leq C \| f^2 \|_{H^1} \leq C \| f \|_{X}^2
$$
as well as
$$
\| \Hil(u^2)- \Hil(v^2) \|_{L^2} = \| u^2-v^2 \|_{L^2} \leq ( \| u \|_{X} + \|v \|_{X} ) \|u-v \|_{L^2} .
$$
In view of the estimates above, we complete the proof.
\end{proof}

Next, we establish the following result.

\begin{lem}
The map $T_\lambda : X^* \to X^*$ is compact.
\end{lem}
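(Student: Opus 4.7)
The plan is to exploit the local Lipschitz bound of Lemma \ref{lem:T_contract} to reduce compactness of $T_\lambda$ in $X$ to relative compactness in $L^2(\R)$ for bounded sequences in $X^*$. Concretely, let $\{u_k\} \subset X^*$ be a bounded sequence, say $\|u_k\|_X \leq M$ for all $k$. Since $X$ is a Hilbert space, we can extract a subsequence (not relabeled) such that $u_k \weakto u$ weakly in $X$. Because $X^*$ is closed and convex in $X$, it is weakly closed, so $u \in X^*$ and in particular $\|u\|_X \leq M$. If we can show along a further subsequence that $u_k \to u$ strongly in $L^2(\R)$, then Lemma \ref{lem:T_contract} yields
\[
\|T_\lambda(u_k) - T_\lambda(u)\|_X \lesssim \lambda\, (M + M)\, (1 + 2 M^2) \,\|u_k - u\|_{L^2} \to 0,
\]
which is exactly the compactness statement.

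The strong $L^2$ convergence is obtained by combining local compactness with tightness at infinity. For any $R>0$, the embedding $H^1([-R,R]) \hookrightarrow L^2([-R,R])$ is compact, so after a diagonal extraction we may assume $u_k \to u$ in $L^2([-R,R])$ for every $R>0$. For the tails, the log-weight built into the $X$-norm gives
\[
\int_{|x|\ge R} |u_k(x)|^2 \, dx \;\leq\; \frac{1}{\log(1+R)} \int_{\R} \log(1+|x|)\, |u_k(x)|^2 \, dx \;\leq\; \frac{M^2}{\log(1+R)},
\]
uniformly in $k$, and the same bound holds for $u$. Sending $R \to \infty$ after the local strong convergence completes the argument that $u_k \to u$ in $L^2(\R)$.

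The steps I expect to be routine are: the weak extraction, the closedness of $X^*$ under weak limits, and the local Rellich step. The only real technical point is the tail estimate, which is handled precisely by the $\log(1+|x|)$ weight that was deliberately included in the definition of the norm on $X$; this is exactly why $X$ was defined that way in Section \ref{subsec:prelim}. Once strong $L^2$ convergence is in hand, the previously established Lipschitz bound on $T_\lambda$ upgrades it for free to convergence in the full $X$-norm, yielding compactness of the map $T_\lambda : X^* \to X^*$.
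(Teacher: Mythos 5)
Your proof is correct and follows essentially the same route as the paper: the paper invokes the compact embedding $X \subset \Le(\R)$ (Proposition \ref{prop:compact}, itself proved exactly by your local Rellich plus $\log(1+|x|)$-tail argument) to get strong $L^2$ convergence of a bounded sequence, and then upgrades to convergence in $X$ via the Lipschitz estimate of Lemma \ref{lem:T_contract}. The only cosmetic difference is that you identify the limit as $T_\lambda(u)$ via weak closedness of $X^*$, whereas the paper argues that $(T_\lambda(u_k))$ is Cauchy in $X$ and converges in the closed set $X^*$.
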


\begin{proof}
Let $(u_k)_{k=1}^\infty$ be a bounded sequence in $X^*$. By Proposition \ref{prop:compact} and passing to a subsequence if necessary, it holds that $(u_k)$ converges strongly in $L^2(\R)$ and thus forms a Cauchy sequence in $L^2(\R)$. From the estimate in Lemma \ref{lem:T_contract}, we deduce that $(T_\lambda(u_k))_{k=1}^\infty$ is a Cauchy sequence in $X$, which converges to some element in $X^*$ due to the closedness of this subset.
\end{proof}

As a next step, we show the following a-priori bound for fixed points of the compact map $T_\lambda : X^* \to X^*$.

\begin{lem} \label{lem:apriori}
For any $v \in X^*$ with $v=T_\lambda(v)$, it holds that
$$
\| T_\lambda(v) \|_X \lesssim  \lambda^2 + \lambda.
$$

%$$
%\| v \|_{L^2} \lesssim 1, \quad \| \sqrt{\log(1+|\cdot|)} v \|_{L^2} \lesssim \sqrt{\lambda}, \quad \| \pt_x v \|_{L^2} \lesssim \lambda + \sqrt{\lambda} .
%$$

\end{lem}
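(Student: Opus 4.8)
The plan is to estimate $\|T_\lambda(v)\|_X$ directly from the fixed-point identity $v=T_\lambda(v)$, bootstrapping on the fact that $v$ is symmetric-decreasing. First I would exploit the pointwise inequality $0<\psi_v(x)\le 1$ (from Lemma~\ref{lem:hilbert}, since $v=v^*$), which gives the crude bound $v(x)=T_\lambda(v)(x)\le\lambda\sqrt{K(x)}$. Combined with Assumption~\textbf{(A)}, this immediately yields $\|v\|_{L^2}\lesssim\lambda$, $\|v\|_{L^\infty}\lesssim\lambda$, and $\|\sqrt{\log(1+|\cdot|)}\,v\|_{L^2}\lesssim\lambda$, so all the $L^2$-type pieces of the $X$-norm of $T_\lambda(v)$ are controlled by $\lambda$. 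The only remaining piece is $\|\partial_x T_\lambda(v)\|_{L^2}$.

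For the derivative term I would differentiate the formula for $T_\lambda$ to get
$$
\partial_x T_\lambda(v)=\lambda\Bigl((\partial_x\sqrt{K})\,\psi_v-\tfrac12\sqrt{K}\,\psi_v\,\Hil(v^2)\Bigr),
$$
and then estimate the two summands separately. The first is bounded in $L^2$ by $\lambda\|\partial_x\sqrt{K}\|_{L^2}\lesssim\lambda$ using $|\psi_v|\le1$ and Lemma~\ref{lem:K_prop}. For the second, I would use $|\psi_v|\le1$, $\sqrt{K}\in L^\infty$, and the bound $\|\Hil(v^2)\|_{L^2}=\|v^2\|_{L^2}=\|v\|_{L^4}^2$ (isometry of $\Hil$ on $L^2$), together with $\|v\|_{L^4}^4\le\|v\|_{L^\infty}^2\|v\|_{L^2}^2\lesssim\lambda^2\cdot\lambda^2=\lambda^4$ from the crude bound above; alternatively one can interpolate $\|v^2\|_{L^2}\le\|v\|_{L^\infty}\|v\|_{L^2}\lesssim\lambda^2$ directly. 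This gives $\|\sqrt{K}\,\psi_v\,\Hil(v^2)\|_{L^2}\lesssim\lambda^2$, so the second summand contributes $\lesssim\lambda\cdot\lambda^2=\lambda^3$. Adding up, $\|\partial_x T_\lambda(v)\|_{L^2}\lesssim\lambda+\lambda^3$, and hence $\|T_\lambda(v)\|_X\lesssim\lambda+\lambda^3$.

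To bring the bound down to the sharper $\lambda^2+\lambda$ claimed, I would feed the improved information back in: once $\|v\|_X\lesssim\lambda+\lambda^3$ is known, rerun the estimate of the nonlinear summand using $\|v^2\|_{L^2}\le\|v\|_{L^\infty}\|v\|_{L^2}$ where $\|v\|_{L^\infty}\lesssim\lambda$ comes from the clean pointwise bound $v\le\lambda\sqrt K$ (which does not degrade), so $\|\sqrt K\psi_v\Hil(v^2)\|_{L^2}\lesssim\lambda\|v\|_{L^2}\lesssim\lambda\cdot\lambda=\lambda^2$, giving $\|\partial_x T_\lambda(v)\|_{L^2}\lesssim\lambda+\lambda^2$. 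Combined with the $L^2$ pieces, which are $\lesssim\lambda$, this yields $\|T_\lambda(v)\|_X\lesssim\lambda^2+\lambda$ as desired. The main subtlety — though not really an obstacle — is making sure every use of the symmetric-decreasing property and Lemma~\ref{lem:hilbert} is legitimate for the fixed point $v$; everything else is a matter of chasing Hölder's inequality with the decay weights from Assumption~\textbf{(A)}.
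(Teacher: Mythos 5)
Your treatment of the $L^2$-type pieces of the $X$-norm is exactly the paper's: the pointwise bound $v=T_\lambda(v)\le\lambda\sqrt{K}$ via Lemma \ref{lem:hilbert} gives $\|v\|_{L^2}\lesssim\lambda$ and $\|\sqrt{\log(1+|\cdot|)}\,v\|_{L^2}\lesssim\lambda$. The gap is in the derivative term. Your first pass correctly yields $\|\partial_x T_\lambda(v)\|_{L^2}\lesssim\lambda+\lambda^3$, but the ``second pass'' does not improve this: the nonlinear summand of $\partial_x T_\lambda(v)$ is $\tfrac{\lambda}{2}\sqrt{K}\,\psi_v\,\Hil(v^2)=\tfrac12 v\,\Hil(v^2)$, so even granting $\|\sqrt{K}\psi_v\Hil(v^2)\|_{L^2}\lesssim\lambda^2$ its contribution is $\lambda\cdot\lambda^2=\lambda^3$ --- you dropped the outer factor of $\lambda$ when you concluded $\lambda+\lambda^2$. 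More fundamentally, no rerun using only $\|v\|_{L^\infty}\lesssim\lambda$ and $\|v\|_{L^2}\lesssim\lambda$ can beat $\|v\Hil(v^2)\|_{L^2}\le\|v\|_{L^\infty}^2\|v\|_{L^2}\lesssim\lambda^3$, since these inputs do not sharpen under iteration.

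The missing ingredient is the $\lambda$-independent Pohozaev bound from Lemma \ref{lem:poho}: for a fixed point, $\|v\|_{L^2}^2=\int_\R Ke^w\,dx=\Lambda<2\pi$, so $\|v\|_{L^2}\lesssim 1$ uniformly in $\lambda$. With this, $\|v\Hil(v^2)\|_{L^2}\le\|v\|_{L^\infty}\|v^2\|_{L^2}\le\|v\|_{L^\infty}^2\|v\|_{L^2}\lesssim\lambda^2$, and the claimed $\|\partial_x v\|_{L^2}\lesssim\lambda+\lambda^2$ follows at once. (The paper routes this slightly differently, through Gagliardo--Nirenberg and the self-consistent inequality $\|\partial_x v\|_{L^2}\lesssim\lambda(1+\|\partial_x v\|_{L^2}^{1/2})$, but the essential input is the same uniform $L^2$ bound.) Note finally that your weaker bound $\lambda+\lambda^3$ would in fact suffice for the downstream applications (Proposition \ref{prop:exist} needs only some $M(\lambda)$, and Proposition \ref{prop:small_unique} only concerns $\lambda\to 0^+$), but it does not establish the lemma as stated.
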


\begin{proof}
Suppose that $v \in X^*$ satisfies $T_\lambda(v) = v$. By the Pohozaev-type result in Lemma \ref{lem:poho}, we deduce the a-priori bound
$$
\| v \|_{L^2}^2 < 2 \pi \lesssim 1.
$$
Next, since $v \in X^*$ is symmetric-decreasing, an application of Lemma \ref{lem:hilbert} yields that $e^{-\frac 1 2 \int_0^x \Hil(v^2)} \leq 1$ for all $x \in \R$. Hence we directly obtain the pointwise bounds
\be \label{ineq:Tv_master}
0 < v(x) \leq \lambda \sqrt{K(x)} \lesssim \lambda \langle x \rangle^{-1/2-\delta},
\ee 
for some $\delta > 0$. From this we readily deduce the bounds
$$
\| v \|_{L^2} \lesssim \lambda, \quad \| \sqrt{\log(1+ |\cdot|)} v \|_{L^2} \lesssim \lambda.
$$

To bound $\pt_x v = \pt_x T_\lambda(v)$, we use the bound $e^{-\frac 1 2 \int_0^x \Hil(v^2)} \leq 1$ once again together with the Gagliardo-Nirenberg interpolation estimate and the $L^2$-boundedness of $\Hil$. This yields 
\begin{align*}
\| \pt_x v \|_{L^2} & \lesssim \| \lambda (\pt_x \sqrt{K}) e^{-\frac{1}{2} \int_0^x \Hil(v^2)} \|_{L^2} + \| v \Hil(v^2) \|_{L^2} \\
& \lesssim \lambda ( \| \pt_x \sqrt{K} \|_{L^2} + \| v \|_{L^\infty} \| v^2 \|_{L^2} )  \lesssim \lambda ( 1 + \| v \|_{L^4}^2) \\
& \lesssim \lambda \left ( 1+ \| v \|_{L^2}^{3/2} \| \pt_x v \|_{L^2}^{1/2} \right )\lesssim \lambda \left (1 + \| \pt_x v \|_{L^2}^{1/2} \right ),
\end{align*}
where in the last step we also used that $\| v \|_{L^2} \lesssim 1$. An elementary argument now yields $\|\pt_x v \|_{L^2} \lesssim \lambda^2 + \lambda$. By recalling the definition of $\| v \|_X$ and the bounds found above, we deduce
$$
\| v \|_X \lesssim \lambda^2 + \lambda,
$$
which completes the proof.
\end{proof}

We are now in the position to show the following existence result.

\begin{prop} \label{prop:exist}
For any $\lambda > 0$, the map $T_\lambda : X^* \to X^*$ has a fixed point. Consequently, for any $v_0 > 0$ there exists a solution $v \in X^*$ of equation \eqref{eq:v_int} with $v(0)=v_0=\lambda \sqrt{K(0)}$. 

Likewise, for any $w_0 \in \R$, there exists a solution $w \in L_{1/2}(\R)$ of \eqref{eq:Liouville} with $K e^w \in L^1(\R)$ and $w(0)=w_0$.
\end{prop}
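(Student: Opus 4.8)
The plan is to produce the fixed point of $T_\lambda$ by a Schaefer-type (Leray--Schauder) argument on the closed convex set $X^*$, using the a-priori bound of Lemma \ref{lem:apriori} to control the relevant one-parameter family of equations. Almost all of the analytic work is already in place: by the lemmas established above, $T_\lambda : X^* \to X^*$ is well-defined, continuous and compact, and $X^*$ is a closed convex subset of $X$ containing $0$.

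The key structural observation is that $T_\lambda$ depends linearly on the parameter, so that $t\,T_\lambda(u) = T_{t\lambda}(u)$ for every $t \in [0,1]$; in particular $t\,T_\lambda$ still maps $X^*$ into $X^*$ since $t\lambda \geq 0$. Consequently the ``bad set''
\[
\mathcal{S} := \bigl\{ u \in X^* : u = t\, T_\lambda(u) \ \text{for some}\ t \in [0,1] \bigr\} = \bigcup_{0 \le \mu \le \lambda} \bigl\{ u \in X^* : T_\mu(u) = u \bigr\}
\]
consists exactly of fixed points of the maps $T_\mu$ with $0 \le \mu \le \lambda$, and applying Lemma \ref{lem:apriori} with $\mu$ in place of $\lambda$ yields $\| u \|_X \lesssim \mu^2 + \mu \le \lambda^2 + \lambda$ for every $u \in \mathcal{S}$, so that $\mathcal{S}$ is bounded in $X$. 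Schaefer's fixed point theorem --- in the form valid for a continuous compact self-map of a closed convex set containing the origin, which one may also derive from Schauder's theorem by composing $T_\lambda$ with the radial retraction onto a sufficiently large ball $\{\|u\|_X \le R\}$ (this retraction preserves $X^*$ precisely because $0 \in X^*$ and $X^*$ is convex) --- then provides $v \in X^*$ with $v = T_\lambda(v)$. By definition $v(x) = \lambda\sqrt{K(x)}\,e^{-\frac12 \int_0^x \Hil(v^2)(y)\,dy} > 0$ for all $x \in \R$, and $v(0) = \lambda\sqrt{K(0)}$; hence, for a prescribed $v_0 > 0$, choosing $\lambda = v_0/\sqrt{K(0)}$ gives a solution $v \in X^*$ of \eqref{eq:v_int} with $v(0) = v_0$.

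For the final statement, given $w_0 \in \R$ we take $\lambda = e^{w_0/2}$ and let $v \in X^*$ be the corresponding fixed point of $T_\lambda$, so that $v$ solves \eqref{eq:v_int} with $v(0) = e^{w_0/2}\sqrt{K(0)} > 0$. Setting $w := \log(K^{-1} v^2)$, Proposition \ref{prop:wv_equiv} (together with the equivalence \eqref{eq:Tv_equiv}) shows that $w \in L_{1/2}(\R)$ solves \eqref{eq:Liouville} with $K e^w = v^2 \in L^1(\R)$, and $w(0) = \log\bigl(K(0)^{-1} v(0)^2\bigr) = \log\bigl(\lambda^2\bigr) = w_0$.

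The one point I expect to require care is the precise fixed-point statement invoked: $X^*$ is only a closed convex subset of $X$, not a Banach space, and $T_\lambda$ need not map any ball into itself, so Schauder's theorem is not directly applicable; one must either cite a convex-set version of the Leray--Schauder alternative or carry out the radial-retraction reduction above, whose only subtlety is the (easy) check that the retraction onto $\{\|u\|_X \le R\}$ maps $X^*$ into itself. Everything else --- continuity, compactness, and the uniform a-priori bound over $0 \le \mu \le \lambda$ --- has been supplied by the results already established.
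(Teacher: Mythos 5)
Your proposal is correct and follows essentially the same route as the paper: the paper likewise uses Lemma \ref{lem:apriori} to bound all solutions of $v=\sigma T_\lambda(v)$ with $\sigma\in(0,1]$ (implicitly via the rescaling $\sigma T_\lambda=T_{\sigma\lambda}$ that you make explicit), and then realizes the Leray--Schauder/Schaefer alternative concretely through the radial retraction $T^*(v)=M\,T_\lambda(v)/\|T_\lambda(v)\|_X$ onto the ball $\{\|u\|_X\le M\}$ in $X^*$, invoking Schauder's theorem for compact continuous self-maps of closed convex sets. The reduction you flag as the one delicate point is precisely the construction the paper carries out.
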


\begin{proof}
Let $\lambda > 0$ be given. By Lemma \ref{lem:apriori}, there exists a constant $M=M(\lambda) >0 $ such that the following implication holds true:
\be  \label{impl:schauder}
\mbox{$\exists v \in X^*$ and $\exists \sigma \in (0,1]$ with $v=\sigma T_\lambda(v)$} \quad \Rightarrow \quad \| v \|_X < M.
\ee
We consider the closed convex set 
$$
S :=  \{ u \in X^* : \|u  \|_X \leq M\} \subset X
$$
along with the mapping $T^* : S \to S$ defined as
$$
T^*(v) := \begin{dcases*} T_\lambda(v) & if $\|T_\lambda(v) \|\leq M$, \\ M \frac{T_\lambda(v)}{\|T_\lambda(v)\|_X} & if $\| T_\lambda(v) \| \geq M$. \end{dcases*}
$$
Clearly, the map $T^* : S \to S$ is well-defined and continuous. Furthermore, since $T_\lambda(S)$ is precompact by the compactness of $T_\lambda : X^* \to X^*$, we see that the image $T^*(S)$ is precompact in the Banach space $X$. By a suitable version of Schauder's fixed point theorem (see \cite{GiTr-01}[Corollary 11.2]), we conclude the map $T^* : S \to S$ has a fixed point $v \in S$. We claim that $v \in S$ is a fixed point of $T_\lambda$ as well. To show this, let us suppose that $\|T_\lambda(v) \| \geq M$. Then $v = T^*(v) = \sigma T_\lambda(v)$ with $\sigma = M/\| T_\lambda(v) \| \leq 1$. But this contradicts \eqref{impl:schauder}.

Thus we have proven that there exists $v \in S \subset X^*$ such that $T_\lambda(v)=v$, whence $v$ solves \eqref{eq:v_int} with $v(0) = \lambda \sqrt{K(0)}$. Finally, by the discussion in Subsection \ref{subsec:prelim}, the existence of $v$ with $v(0)=v_0>0$ given is equivalent to the fact that $w=\log(K^{-1} v^2) \in L_{1/2}(\R)$ is a solution of \eqref{eq:Liouville} with $w(0) = w_0 = \log(K^{-1}(0) v(0)^2) \in \R$ and $\int_{\R} K e^w \, dx = \int_{\R} v^2  \,dx < +\infty$.
\end{proof}

\subsection*{Proof of Theorem (iii): Existence.} This directly follows from Proposition \ref{prop:exist}. \hfill $\Box$

\section{Uniqueness}

\label{sec:uniqueness} 

In this section, we prove the uniqueness result stated in Theorem \ref{thm:main2}. This will be the main result of this paper.

\subsection{Nondegeneracy and Local Uniqueness}

Recalling the definition of the Banach space $X$ above, we define the map $F : X \times (0,\infty) \to X$ by setting
\be \label{def:Fmap}
 F(u, \lambda)(x) := \lambda \sqrt{K(x)} e^{-\frac{1}{2} \int_0^x \Hil(u^2)(y) \, dy}  - u(x).
\ee
Thus, in terms of the map $T_\lambda$ introduced in the previous section, we can write
$$
F(u, \lambda) = T_\lambda(u) - u .
$$
However, the reader should be aware of the fact that we extend the map $T_\lambda$ from $X^* \subset X$ to all of $X$ here. By standard estimates, it is straightforward to check that the mapping $F : X \times (0,\infty) \to X$ is indeed well-defined and of class $C^1$; see Lemma \ref{lem:F_well} below. By construction, we have the equivalence
\be \label{eq:Fvl}
F(v, \lambda) = 0 \quad \Longleftrightarrow \quad \mbox{$v \in X$ solves \eqref{eq:v_int} with $\displaystyle \lambda = \frac{v(0)}{\sqrt{K(0)}}$}.
\ee

Our next goal is to apply the implicit function theorem in order to construct a locally unique $C^1$-branch $\lambda \mapsto v_\lambda$ around a given solution $(v, \lambda)$ satisfying $F(v,\lambda)=0$. As a key result, we shall need to prove that the Fr\'echet derivative $\pt_v F$ has a bounded inverse on $X$. Indeed, we notice that
\be
\pt_v F(v, \lambda) = \KK - \mathds{1},
\ee 
where $\KK :X \to X$ denotes the bounded linear operator given by
\be
(\KK f)(x) = -v(x) \int_0^x \Hil(v f)(y) \, dy.
\ee
We record the following basic fact.

\begin{lem}\label{lem:KK_compact}
Suppose that $F(v,\lambda) = 0$. Then the linear operator $\KK$ extends to a bounded map from  $\Le(\R)$ into $X$. As a consequence, the linear operator $\KK : \Le(\R) \to \Le(\R)$ is compact.
\end{lem}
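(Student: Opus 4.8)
The plan is to establish the claim in two stages: first show that $\KK$, defined initially on $X$, actually maps $\Le(\R)$ boundedly into $X$, and then deduce the asserted compactness of $\KK : \Le(\R) \to \Le(\R)$ from the compact embedding $X \hookrightarrow \Le(\R)$. For the first stage, I would start from the explicit formula $(\KK f)(x) = -v(x)\int_0^x \Hil(vf)(y)\,dy$ and estimate the three pieces of the $X$-norm of $\KK f$ separately: the $L^2$-norm, the weighted $L^2$-norm with weight $\log(1+|x|)$, and the $L^2$-norm of $\pt_x(\KK f)$. Since $F(v,\lambda)=0$, we know $v \in X^*$ with $v(x)\le \lambda\sqrt{K(x)} \lesssim \lambda\langle x\rangle^{-1/2-\delta}$ (this is \eqref{ineq:Tv_master}), so $v$ has good decay; in particular $v \in L^\infty \cap L^2 \cap L^4$ and $|x|^{1/q}v \in L^2$ for a range of $q$.

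For the $L^2$-bound, I would mimic the Lipschitz estimate of Lemma \ref{lem:T_contract}: write $\left|\int_0^x \Hil(vf)(y)\,dy\right| \le |x|^{1/q}\|\Hil(vf)\|_{L^p}$ by Hölder on the interval $[0,|x|]$, then use $L^p$-boundedness of $\Hil$ (for $1<p\le 2$, with $1/q+1/p=1$) to get $\|\Hil(vf)\|_{L^p} \lesssim \|vf\|_{L^p} \le \|v\|_{L^r}\|f\|_{L^2}$ with $1/2+1/r = 1/p$; choosing $q$ large makes $r$ close to $2$, and $v \in L^r$ follows from $v\in L^2\cap L^\infty$. Thus $|(\KK f)(x)| \lesssim |x|^{1/q} v(x)\|f\|_{L^2}$, and since $|x|^{1/q}\sqrt{K}\in L^2$ and $\sqrt{\log(1+|x|)}\,|x|^{1/q}\sqrt{K}\in L^2$ by Assumption \textbf{(A)} for $q\gg 1$, we control both the $L^2$ and the weighted $L^2$ parts. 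For the derivative, $\pt_x(\KK f)(x) = -(\pt_x v)(x)\int_0^x\Hil(vf)\,dy - v(x)\Hil(vf)(x)$; the first term is handled as above using $\pt_x v \in L^2$ (so $(\pt_x v)(x)\cdot|x|^{1/q}$ need not decay, but we can instead bound $\int_0^x\Hil(vf)$ in $L^\infty$ using $\Hil(vf)\in H^1 \hookrightarrow L^\infty$ when $f$ is smooth, or more robustly pair the $L^2$ factor $\pt_x v$ against the $L^\infty$ factor obtained from a Hölder estimate with a fixed small $1/q$), and the second term via $v\in L^\infty$ and $\|\Hil(vf)\|_{L^2} = \|vf\|_{L^2}\le \|v\|_{L^\infty}\|f\|_{L^2}$. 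Collecting these, $\|\KK f\|_X \lesssim C(v,\lambda)\|f\|_{L^2}$, which proves $\KK$ extends to a bounded operator $\Le(\R)\to X$.

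For the second stage, compactness of $\KK : \Le(\R)\to\Le(\R)$ is immediate: it factors as $\Le(\R)\xrightarrow{\KK} X \hookrightarrow \Le(\R)$, where the first map is bounded by stage one and the inclusion $X\hookrightarrow \Le(\R)$ is compact — this is precisely the compactness statement that must already be available in the paper (the "Proposition \ref{prop:compact}" invoked earlier), which compactly embeds $H^1$-functions with the extra logarithmic weight into $L^2$. The main obstacle I anticipate is the derivative term $(\pt_x v)\int_0^x\Hil(vf)$: because $\pt_x v$ sits only in $L^2$ without a quantitative pointwise decay rate, one cannot simply push a weight $|x|^{1/q}$ onto it. The clean fix is to bound $\left\|\int_0^\cdot \Hil(vf)\right\|_{L^\infty}$ by choosing in the Hölder step a small but fixed exponent $1/q_0$ together with the decay of $v$ inside $\Hil(vf)$, or alternatively to split $\R = \{|x|\le 1\}\cup\{|x|>1\}$ and handle the tail using the decay of $\Hil(vf)$ that follows from $vf\in L^1\cap L^2$ when $f\in L^2$ (since $v\in L^2$). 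Either route keeps all constants depending only on $v$ and $\lambda$, as required.
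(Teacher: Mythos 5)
Your overall architecture matches the paper's: the pointwise bound $\bigl|\int_0^x \Hil(vf)\,dy\bigr|\leq |x|^{1/q}\|\Hil(vf)\|_{L^p}\lesssim |x|^{1/q}\|f\|_{L^2}$ via H\"older and the $L^p$-boundedness of $\Hil$, combined with $0<v\leq\lambda\sqrt{K}$ from $F(v,\lambda)=0$ and Lemma \ref{lem:hilbert}, controls the $L^2$ and log-weighted $L^2$ parts of $\|\KK f\|_X$ exactly as in the paper, and the compactness then factors through Proposition \ref{prop:compact} as you say. However, there is a genuine gap in your treatment of the derivative term $(\pt_x v)\int_0^x\Hil(vf)$, and neither of your proposed fixes closes it. An $L^\infty$ bound on $\int_0^\cdot\Hil(vf)$ is not available: $\Hil$ does not map $L^1$ to $L^1$, and for generic $f\in\Le(\R)$ with $\int vf\neq 0$ one has $\Hil(vf)(x)\sim \tfrac{1}{\pi x}\int vf$ at infinity, so the antiderivative grows like $\log|x|$ (and the H\"older step only ever gives growth $|x|^{1/q}$, never boundedness; $\Hil(vf)\in L^\infty$ is irrelevant since integrating a bounded function over $[0,x]$ gives linear growth). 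Your second fix runs into the same wall: after extracting the $O(1/|x|)$ decay of $\Hil(vf)$ you still need $(\log\langle x\rangle)\,\pt_x v\in L^2$, and knowing only $\pt_x v\in L^2$ is not enough.

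The missing idea is to use the equation satisfied by $v$ once more. Differentiating the fixed-point identity $v=T_\lambda(v)=\lambda\sqrt{K}\,e^{-\frac12\int_0^x\Hil(v^2)}$ and using $0<e^{-\frac12\int_0^x\Hil(v^2)}\leq 1$ gives the \emph{pointwise} bound
\begin{equation*}
|\pt_x v(x)|\;\leq\;\lambda\,|\pt_x\sqrt{K}(x)|+\tfrac{\lambda}{2}\sqrt{K(x)}\,\|\Hil(v^2)\|_{L^\infty},
\end{equation*}
so that $\pt_x v$ inherits the decay of $\pt_x\sqrt{K}$ and $\sqrt{K}$. Assumption \textbf{(A)} then guarantees $|x|^{1/q}\,\pt_x\sqrt{K}\in L^2(\R)$ and $|x|^{1/q}\sqrt{K}\in L^2(\R)$ for $q\gg1$, whence $\|(\pt_x v)\int_0^x\Hil(vf)\|_{L^2}\lesssim \|f\|_{L^2}$. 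This is precisely how the paper handles the term; your bound for the remaining piece $v\,\Hil(vf)$ via $\|v\|_{L^\infty}^2\|f\|_{L^2}$ is correct and agrees with the paper.
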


\begin{proof}
We show that the linear operator $\KK$ extends to a bounded map from $\Le(\R)$ into $X$ as follows. Let $f \in \Le(\R)$ be given. Using that $v \in H^1(\R)$ and by Sobolev embeddings, we deduce from H\"older's inequality together with the boundedness of $\Hil$ on $L^p(\R)$ when $p \in (1,\infty)$ that we have the pointwise bound
$$
 \left | \int_0^x \Hil(vf)(y) \, dy \right | \leq |x|^{1/q} \| \Hil(v f) \|_{L^p} \leq C |x|^{1/q} \| v f \|_{L^p} \leq C |x|^{1/q}  \| v \|_{H^1} \| f \|_{L^2} 
$$
where $\frac{1}{p} + \frac{1}{q} = 1$, $1 < p  \leq 2$, and with some constant $C=C(p)> 0$. Next, we use that $F(v, \lambda)=0$ holds and thus $v =v^* \in X^*$ is symmetric-decreasing (see Proposition \ref{prop:wv_equiv}). By Lemma \ref{lem:hilbert}, this implies that $e^{-\frac{1}{2} \int_0^x \Hil(v^2) \,dy} \leq 1$ for all $x \in \R$. In particular, this shows that $0 <  v(x) \leq \lambda \sqrt{K(x)}$, which implies the pointwise bound
 \be \label{ineq:KK_point}
 |(\KK f)(x)| \leq C  \| f\|_{L^2} \sqrt{K(x)} |x|^{1/q} \leq C \|f \|_{L^2} \langle x \rangle^{-\frac{1}{2}-\eps},
 \ee
for some $\eps > 0$, where the last inequality follows from the assumed bound for $K$ and by taking $q \gg 1$ sufficiently large (and thus $p>1$ sufficiently close to 1). Clearly, the bound \eqref{ineq:KK_point} shows that 
$$
\int_{\R} |\KK f(x)|^2 \, dx + \int_{\R} \log(1 +|x|) |\KK f(x)|^2 \,dx \leq C \| f\|_{L^2}^2
$$
with some constant $C> 0$ independent of $f$.

Next, by differentiating and using the equation satisfied by $v$, we observe that
\begin{align*}
\| \pt_x \KK f \|_{L^2} & \leq \| (\pt_x v) \int_0^x \Hil(v f) \|_{L^2} + \| v \Hil(v f) \|_{L^2} \\
& \leq C \left (  \| (\pt_x \sqrt{K}) |x|^{1/q} \|_{L^2} + \| \sqrt{K} |x|^{1/q} \|_{L^2} + \| v \|_{L^\infty}^2 \right ) \| f \|_{L^2} \leq C \| f\|_{L^2}
\end{align*}
with some constant $C>0$ independent of $f$. Again, we have chosen $q \gg 1$ sufficiently large and we have used the pointwise bounds for $\sqrt{K}$ and $\pt_x \sqrt{K}$. In summary, we have shown that
$$
\| \KK f \|_{X} \leq C \|f \|_{L^2}
$$
with some constant $C>0$ independent of $f$. Since $v$ and $f$ are even functions, we readily check that $\KK f$ is even as well. Hence we have proven that the linear map $\KK : \Le(\R) \to X$ is bounded.

Finally, we note that the map $\KK : \Le(\R) \to \Le(\R)$ is compact due to fact that the embedding $X \subset \Le(\R)$ is compact; see Proposition \ref{prop:compact} below. 
\end{proof}

Next, we establish the following key result.

\begin{lem} \label{lem:KK_invert}
Let $F(v, \lambda)=0$ hold.  Then the Fr\'echet derivative $\pt_v F = \KK - \mathds{1}$ is invertible on $X$ with bounded inverse.
\end{lem}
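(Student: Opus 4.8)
Since $\KK:\Le(\R)\to\Le(\R)$ is compact (Lemma \ref{lem:KK_compact}) and it maps into $X$ boundedly, the operator $\pt_v F=\KK-\mathds{1}$ is a compact perturbation of $-\mathds{1}$ on $X$, so by the Fredholm alternative it suffices to show that $\KK-\mathds{1}$ is injective on $X$, i.e.\ that the only $\psi\in X$ with $\KK\psi=\psi$ is $\psi\equiv 0$. The plan is to first translate this kernel equation into a differential equation, then use the monotonicity-formula technology announced in the introduction to rule out nontrivial solutions. Suppose $\psi\in X$ satisfies $\psi(x)=-v(x)\int_0^x\Hil(v\psi)(y)\,dy$. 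Evaluating at $x=0$ gives $\psi(0)=0$. Differentiating and using the equation $\pt_x v=(\pt_x\log\sqrt K)v-\tfrac12\Hil(v^2)v$ satisfied by $v$, one obtains, after simplification, a linear relation of the form $\pt_x\psi-(\pt_x\log\sqrt K)\psi+\tfrac12\Hil(v^2)\psi+v\,\Hil(v\psi)=0$. Applying the Hilbert transform and using $\Hil\hD=-\pt_x$ together with $\Hil^2=-\mathds1$ (on the even/odd decomposition, with the appropriate parity of $v^2$, $v\psi$, etc.), this should collapse to the claimed identity
$$
(-\Delta)^{1/2}\psi - v^2\psi = 0 \quad\mbox{in }\R,
$$
with $\psi\in\dot H^1_{\mathrm{even}}(\R)$ and $\psi(0)=0$. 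This is the reduction flagged in the introduction.

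Next I would show that this linear nonlocal eigenvalue problem has no nontrivial even solution vanishing at the origin. This is the heart of the matter and is where the \emph{monotonicity formula} for $(-\Delta)^{1/2}$ enters. Rather than the Caffarelli--Silvestre harmonic extension to $\R^2_+$, the paper's stated strategy is to work directly with the singular-integral representation of $\hD$ and relate the resulting half-line expression to Carleman--Hankel operators. Concretely, one introduces a suitable one-parameter family of rescalings $\psi_t(x)=\psi(tx)$ (or a localized/truncated version thereof) and studies the monotonicity in $t$ of an associated energy-type quantity $\Phi(t)$ built from $\iint \frac{|\psi_t(x)-\psi_t(y)|^2}{(x-y)^2}\,dx\,dy$ minus the potential term $\int v^2\psi_t^2$; the equation forces $\Phi$ to be monotone (or to have a sign-definite derivative), while the boundary condition $\psi(0)=0$ and the even symmetry force $\Phi$ to be constant, hence $\psi_t$ independent of $t$, which combined with $\psi\in\dot H^1$ and $\psi(0)=0$ forces $\psi\equiv0$. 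Alternatively, one exploits that any even solution can be pulled back through $x=\tan(\theta/2)$ to the circle, where $\hD$ becomes the Dirichlet-to-Neumann operator and the condition $\psi(0)=0$ becomes a vanishing condition at a point; a Hankel-operator positivity argument then gives $\psi\equiv0$.

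A cleaner route, which I would try first, uses the explicit structure coming from the Calogero--Moser side: since $v=v^\ast$ is symmetric-decreasing and solves \eqref{eq:v_diff}, the operator $\hD - v^2$ is (up to conjugation) the linearization at a ground state, and one expects $v$ itself, or rather $\pt_x v$ and the scaling/translation generators, to span the relevant kernel on $\dot H^1$. The key point is that $\pt_x v$ is odd while we restrict to even $\psi$, and the scaling generator $x\pt_x v + v$ (or $\Lambda v$) does \emph{not} vanish at $x=0$ (indeed at $x=0$ it equals $v(0)>0$); so imposing simultaneously evenness \emph{and} $\psi(0)=0$ leaves only $\psi\equiv0$. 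Making this precise requires knowing that the kernel of $\hD-v^2$ on $\dot H^1(\R)$ is exactly spanned by these explicit symmetries, which is where the monotonicity formula is genuinely needed — to upgrade a formal count to a rigorous one-dimensionality statement.

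The main obstacle, as I see it, is precisely this spectral nondegeneracy: proving that $\hD-v^2$ has trivial kernel among even functions vanishing at the origin, \emph{without} the harmonic extension, by the direct singular-integral monotonicity argument. The analytic subtleties are (i) justifying the differentiation/Hilbert-transform manipulations above in the $\dot H^1$ setting, keeping track of parities so that the cross terms cancel correctly; (ii) controlling the logarithmically-weighted behavior at infinity built into the norm of $X$ so that boundary terms in the monotonicity/Pohozaev computation vanish; and (iii) correctly identifying the resulting half-line kernel as that of a positive Carleman--Hankel operator so that its nontrivial null vectors are excluded. Once injectivity of $\KK-\mathds1$ on $X$ is in hand, the Fredholm alternative (Lemma \ref{lem:KK_compact}) immediately upgrades it to bijectivity with bounded inverse, completing the proof.
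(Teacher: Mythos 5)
Your overall architecture (Fredholm alternative for the compact operator $\KK$, reduction of the kernel equation to a linear spectral problem for $(-\Delta)^{1/2}-v^2$, then a monotonicity/Hankel positivity argument) matches the paper, but there are two genuine gaps. First, the reduction is garbled: if $f\in\Le(\R)$ satisfies $\KK f=f$, the function that solves $(-\Delta)^{1/2}\psi-v^2\psi=0$ is \emph{not} $f$ itself but the antiderivative $\psi(x):=-\int_0^x\Hil(vf)(y)\,dy=f/v$. Indeed $\pt_x\psi=-\Hil(vf)$, so $(-\Delta)^{1/2}\psi=\Hil\pt_x\psi=-\Hil^2(vf)=vf=v^2\psi$, with $\psi$ even and $\psi(0)=0$. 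Your proposed route of differentiating the fixed-point identity for $f$, invoking the equation for $\pt_x v$, and then ``applying the Hilbert transform'' to the resulting first-order relation does not collapse to $(-\Delta)^{1/2}f-v^2f=0$; the Hilbert transform of the product terms $(\pt_x\log\sqrt K)f$, $\Hil(v^2)f$, $v\,\Hil(vf)$ does not simplify in the way you assert. The correct substitution $\psi=f/v$ is both simpler and what the paper does.

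Second, and more seriously, the heart of the lemma --- showing that the only even $\psi\in\dot H^1$ with $\psi(0)=0$ solving $(-\Delta)^{1/2}\psi=v^2\psi$ is $\psi\equiv0$ --- is not proved in your proposal; you offer three speculative strategies (a rescaling family $\psi_t$ with a monotone energy $\Phi(t)$, a pullback to the circle, and a symmetry-counting/nondegeneracy argument) and explicitly flag this as the main obstacle. The paper's actual argument is a short, direct computation: pair the equation with $\pt_x\psi$ over $[0,\infty)$. Writing $g=\pt_x\psi$ (which is odd), the nonlocal term becomes
\begin{equation*}
\int_0^\infty (\Hil g)\,g\,dx \;=\; -\frac{1}{\pi}\int_0^\infty\!\!\int_0^\infty\frac{g(x)g(y)}{x+y}\,dx\,dy\;\le\;0,
\end{equation*}
with equality iff $g\equiv0$, by positive definiteness of the Carleman--Hankel kernel $(x+y)^{-1}$ (via the Laplace-transform identity). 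The potential term integrates by parts, using $\psi(0)=0$ and the decay at infinity, to $-\tfrac12\int_0^\infty W'\psi^2\le 0$ with $W=-v^2$ monotone increasing on $[0,\infty)$ because $v=v^*$. Both terms being nonpositive and summing to zero forces $g\equiv0$, hence $\psi\equiv0$. None of your three sketches supplies this computation, so as written the proof is incomplete at its decisive step.
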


\begin{proof}
Since $\KK$ maps $\Le(\R)$ into $X$, it suffices to show $\KK - \mathds{1}$ is invertible on $\Le(\R)$. By the compactness of $\KK$ on $\Le(\R)$ and the Fredholm alternative, this amounts to showing the implication
\be \label{eq:KK_fred}
\KK f = f \quad \mbox{and} \quad f \in \Le(\R) \quad \Rightarrow \quad f = 0.
\ee
Indeed, let us assume that $f \in \Le(\R)$ solves $\KK f = f$. Since $f \in \mathrm{ran} \, \KK$, we obtain that $f \in X$ and in particular the function $f$ is continuous. Next, we note that the equation $\KK f = f$ can be written as
$$
v \psi = f,
$$ 
where we define the even and continuous function $\psi : \R \to \R$ by setting
$$
\psi(x) := -\int_0^x \Hil (v f)(y) \, dy.
$$
Since $\Hil(vf) \in L^2(\R)$, we see that $\pt_x \psi \in L^2(\R)$. On the other hand, we see that $(-\Delta)^{1/2} \psi = \Hil \pt_x \psi = -\Hil^2(vf) = vf \in L^2(\R)$. Therefore, we find that $\psi \in \dot{H}^1_{\mathrm{even}}(\R)$ solves the equation
\be
(-\Delta)^{1/2} \psi - v^2 \psi = 0 \quad \mbox{in $\R$}.
\ee
Now, by using that $W=-v^2$ is $C^1$ and monotone increasing on $[0,\infty)$ and $\psi \in \dot{H}^1_{\mathrm{even}}(\R)$ with $\psi(0)=0$, we obtain that $\psi \equiv 0$ by Lemma \ref{lem:mono} below. Thus $f = v \psi = 0$ is the zero function. This shows \eqref{eq:KK_fred}.

In summary, we have shown that the bounded  linear operator $\pt_v F=\KK-\mathds{1}$ is invertible on $X$. By bounded inverse theorem, its inverse $(\pt_v F)^{-1} : X \to X$ is bounded as well.
\end{proof}

\begin{lem}[Key Lemma] \label{lem:mono}
Let $W : \R \to \R$ be a $C^1$-function with $W'(x) \geq 0$ for $x \geq 0$. Assume that $\psi  \in \dot{H}^1_{\mathrm{even}}(\R)$ solves
$$
(-\Delta)^{1/2} \psi + W \psi = 0 \quad \mbox{in $\R$}
$$
with $W(x) \psi(x)^2 \to 0$ as $x \to +\infty$. Then $\psi(0)=0$ implies that $\psi \equiv 0$.
\end{lem}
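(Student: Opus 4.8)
The plan is to test the equation against $\psi'$ on the half-line $(0,\infty)$ and to extract from the resulting Pohozaev-type identity two competing bounds on a single scalar $J$: a lower bound $J\ge 0$ coming from the monotonicity $W'\ge 0$ together with the normalization $\psi(0)=0$, and an upper bound $J\le 0$ coming from the fact that $(-\Delta)^{1/2}$, restricted to even functions and paired against $\psi'$ over $(0,\infty)$, produces a negative semi-definite Carleman--Hankel quadratic form. Sandwiching will force $J=0$, injectivity of the Carleman operator will then give $\psi'\equiv 0$ on $(0,\infty)$, and continuity together with evenness will yield $\psi\equiv\psi(0)=0$. This is, in effect, the endpoint case of a monotonicity formula, realized without the Caffarelli--Silvestre extension.

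\textbf{Step 1 (half-line Pohozaev identity).} Since $\psi\in\dot{H}^1_{\mathrm{even}}(\R)$ we have $\psi'\in L^2(\R)$ and $(-\Delta)^{1/2}\psi=\Hil(\psi')\in L^2(\R)$ (recall $\Hil\,(-\Delta)^{1/2}=-\pt_x$ and $\Hil^2=-\mathrm{Id}$), so $(-\Delta)^{1/2}\psi\cdot\psi'\in L^1(\R)$. I would multiply $(-\Delta)^{1/2}\psi+W\psi=0$ by $\psi'$, integrate over $(0,R)$, integrate by parts in the local term via $\int_0^R W\psi\psi'\,dx=\tfrac12[W\psi^2]_0^R-\tfrac12\int_0^R W'\psi^2\,dx$, and let $R\to\infty$, using the hypothesis $W(R)\psi(R)^2\to 0$. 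This yields
$$
J:=\int_0^\infty (-\Delta)^{1/2}\psi\cdot\psi'\,dx=\tfrac12 W(0)\psi(0)^2+\tfrac12\int_0^\infty W'(x)\psi(x)^2\,dx,
$$
with $\int_0^\infty W'\psi^2\,dx<+\infty$ as a by-product; since $W'\ge 0$ on $[0,\infty)$ and $\psi(0)=0$, this gives $J=\tfrac12\int_0^\infty W'\psi^2\,dx\ge 0$.

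\textbf{Step 2 (Carleman--Hankel form).} Set $g:=\psi'\in L^2(\R)$, which is odd since $\psi$ is even. Decomposing $\Hil(g)=\Hil(g_0)+\Hil(g_1)$, where $g_0,g_1$ are the restrictions of $g$ to $(0,\infty)$ and $(-\infty,0)$ (extended by zero), the term $\int_0^\infty\Hil(g_0)\,g\,dx=\langle\Hil g_0,g_0\rangle_{L^2(\R)}$ vanishes by skew-adjointness of $\Hil$, while for $x>0$ one has $\Hil(g_1)(x)=\tfrac1\pi\int_{-\infty}^0\tfrac{g(y)}{x-y}\,dy$ (with no principal value needed), so that, after the substitution $y\mapsto -y$ and using $g(-y)=-g(y)$,
$$
J=-\frac1\pi\iint_{(0,\infty)^2}\frac{g(x)\,g(y)}{x+y}\,dx\,dy.
$$
The Hilbert--Hardy inequality $\iint_{(0,\infty)^2}\tfrac{|f(x)||h(y)|}{x+y}\,dx\,dy\le\pi\|f\|_{L^2}\|h\|_{L^2}$ makes all of the integrals absolutely convergent and licenses the use of Fubini's theorem; writing $\tfrac1{x+y}=\int_0^\infty e^{-(x+y)t}\,dt$ and applying Tonelli, the double integral equals $\int_0^\infty\left(\int_0^\infty e^{-tx}g(x)\,dx\right)^2 dt\ge 0$, hence $J\le 0$.

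\textbf{Step 3 (conclusion).} Steps 1 and 2 force $J=0$, so the Laplace transform of $g=\psi'$ vanishes identically on $(0,\infty)$; by injectivity of the Laplace transform (equivalently, positivity and injectivity of the Carleman operator $h\mapsto\int_0^\infty\tfrac{h(y)}{\,\cdot\,+y}\,dy$ on $L^2(0,\infty)$), $\psi'=0$ a.e.\ on $(0,\infty)$. Since $\psi$ is continuous and even with vanishing derivative on $(0,\infty)$, it is constant, and $\psi(0)=0$ gives $\psi\equiv 0$. The hard part will be Step 2: recognizing that testing against $\psi'$ and folding the line onto the half-line via evenness converts the nonlocal operator into a \emph{definite} Carleman--Hankel quadratic form, with the singular $\tfrac1{x-y}$-kernel cancelling by antisymmetry and only the Hankel kernel $\tfrac1{x+y}$ surviving. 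The remainder is bookkeeping --- the principal-value cancellation and the Fubini interchanges are controlled by the Hilbert--Hardy bound, and the boundary terms at infinity by the decay hypothesis $W\psi^2\to 0$ together with $(-\Delta)^{1/2}\psi\cdot\psi'\in L^1(\R)$ --- though one must keep careful track of the sign conventions for $\Hil$ and for $(-\Delta)^{1/2}=\Hil\,\pt_x$.
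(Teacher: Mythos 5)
Your proposal is correct and follows essentially the same route as the paper: testing the equation against $\psi'$ on the half-line, integrating the local term by parts to exploit $W'\ge 0$ and $\psi(0)=0$, and reducing the nonlocal pairing (via oddness of $g=\psi'$, which kills the $\frac{1}{x-y}$ part and leaves the Hankel kernel $\frac{1}{x+y}$) to a sign-definite Carleman--Hankel form identified through the Laplace transform. The only cosmetic differences are your sandwich formulation $0\le J\le 0$ in place of the paper's $I+II=0$ with $I,II\le 0$, and your more explicit $g_0$/$g_1$ splitting and Hilbert--Hardy/Fubini justification of the step the paper does in one line.
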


\begin{remarks*}
1) The assumption above that $\psi$ is an even function is essential. For example, the odd function $\psi(x) = \frac{2x}{x^2+1} \in H^1(\R)$ (and hence $\psi(0)=0$) solves the equation
$$
(-\Delta)^{1/2} \psi - \frac{2}{1+x^2} \psi = 0 \quad \mbox{in $\R$}.
$$ 

2) We refer to Appendix \ref{sec:mono} for a discussion which relate the arguments below to {\em monotonicity formulas} for $(-\Delta)^s$ in \cite{CaSo-05, CaSi-14,FrLeSi-16}, which involve the $s$-harmonic extension.
\end{remarks*}

\begin{proof}
By integrating the equation on $[0, \infty)$ against $\pt_x \psi \in L^2(\R)$, we find
$$
I + II:= \int_0^{+\infty} ((-\Delta)^{1/2} \psi)(x) \pt_x \psi(x) \, dx + \int_0^{+\infty} W(x) \psi(x) \pt_x \psi(x) \, dx = 0.
$$
We remark that $(-\Delta)^{1/2} \psi, W\psi \in L^2(\R)$ holds and hence the integrals above are absolutely convergent. To analyze the first term, we set $g := \pt_x \psi$ and we notice that
\begin{align*}
I & = \int_0^{+\infty} (\Hil g)(x) g(x) \, dx = \frac{1}{\pi} \int_0^\infty \left ( PV \int_{-\infty}^{+\infty} \frac{g(y)}{x-y} \, dy \right ) g(x) \, dx \\
& = -\frac{1}{\pi} \int_0^{+\infty} \!  \! \int_0^{+\infty} \frac{g(x) g(y)}{x+y} \, dx \, dy,
\end{align*}
where the last step follows by using the anti-symmetry $g(-y) = -g(y)$. Next, we recall the known formula (allowing also for complex-valued functions for the moment):
$$
\int_0^{+\infty} \! \! \int_0^{+\infty} \frac{\overline{\phi}(t) \phi(s)}{t+s} \, ds \, dt = \int_{-\infty}^{+\infty} |(\mathsf{L} \phi)(\lambda)|^2 \, d\lambda \quad \mbox{for all $\phi \in C^\infty_0(\R_+)$},
$$
where $(\mathsf{L} \phi)(\lambda) = \int_0^{+\infty} e^{\lambda t} \phi(t) \, dt$ denotes the one-side Laplace transform; see, e.\,g., \cite{Ya-15}. From this formula, which extends to $L^2(\R_+)$ by density, we readily deduce the classical fact the the Carleman--Hankel operator with the kernel $(x+y)^{-1}$ on $L^2(\R_{+})$ is positive definite.  Hence, if we go back to the expression for $I$, we deduce that
$$
I \leq 0 \quad \mbox{with $I=0$ if and only if $g \equiv 0$}.
$$

On the other hand, by integration by parts and using that $\psi(0)=0$ and $W(x) \psi(x)^2 \to 0$ as $x \to +\infty$, we obtain from $W'(x) \geq 0$ for $x \geq 0$ that
$$
II  = \int_0^{+\infty} W(x) \psi(x) \pt_x \psi(x) \, dx = - \frac{1}{2} \int_0^{+\infty} W'(x) \psi(x)^2 \, dx \leq 0.
$$
Because of $I+II=0$, we conclude that $I=0$ must hold and thus $g=\pt_x \psi \equiv 0$. Hence $\psi$ is a constant function. Since $\psi(0)=0$ by assumption, this implies that $\psi \equiv 0$.
\end{proof}

By applying the implicit function theorem together with Lemma \ref{lem:KK_invert}, we can construct a unique local branch around any given solution of the equation $F(v,\lambda)=0$ as follows.

\begin{prop} \label{prop:local}
Let $(v, \lambda) \in X \times (0, \infty)$ solve $F(v, \lambda)=0$. Then there exist an open interval $I = (\lambda-\eps, \lambda+\eps) \cap (0, \infty)$ with some $\eps = \eps(\| v \|_{X})> 0$ and a $C^1$-map 
$$
I \to X, \quad t \mapsto v_t
$$ 
such that $v_{t=\lambda}=v$ and $F(v_t, t)=0$ for all $t \in I$. Moreover, there exists a neighborhood $N \subset X$ around $v$ such that $F(u, t)=0$ with $(u,t) \in N \times I$ implies that $u = v_t$.
\end{prop}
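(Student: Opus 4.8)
The plan is to invoke the implicit function theorem directly, with Lemma \ref{lem:KK_invert} supplying the crucial invertibility hypothesis. First I would verify that $F : X \times (0,\infty) \to X$ is of class $C^1$ on a neighborhood of the given point $(v,\lambda)$; this is essentially contained in the estimates already developed (the continuity and local Lipschitz bounds for $T_\lambda$ in Lemma \ref{lem:T_contract}, now extended from $X^*$ to all of $X$), and the differentiability in $\lambda$ is immediate since $\lambda$ enters linearly. The partial Fréchet derivative $\pt_v F(v,\lambda) = \KK - \mathds{1}$ is identified by the explicit formula for $\KK$ above, and by Lemma \ref{lem:KK_invert} it is a bounded linear isomorphism of $X$ onto itself with bounded inverse — precisely the nondegeneracy condition required.

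With these two ingredients the standard implicit function theorem in Banach spaces (applied at $(v,\lambda)$) yields an $\eps > 0$, an open interval $I = (\lambda - \eps, \lambda + \eps) \cap (0,\infty)$, a neighborhood $N$ of $v$ in $X$, and a unique $C^1$-map $t \mapsto v_t$ from $I$ to $N$ with $v_\lambda = v$ and $F(v_t, t) = 0$ for all $t \in I$; moreover any solution $(u,t) \in N \times I$ of $F(u,t)=0$ must coincide with $v_t$. The only point requiring a remark is the claim that $\eps$ can be taken to depend only on $\| v \|_X$: this follows by tracking the quantitative form of the implicit function theorem, where the size of the neighborhood is controlled by the operator norm of $(\pt_v F(v,\lambda))^{-1}$ together with a local Lipschitz constant for $u \mapsto \pt_v F(u,t)$ near $(v,\lambda)$; both of these are bounded in terms of $\| v \|_X$ by the estimates of Section 4 and Lemma \ref{lem:KK_invert}, so no further information about $v$ is needed.

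I do not anticipate a genuine obstacle here, since the hard analytic work — establishing invertibility of $\KK - \mathds{1}$ via the monotonicity formula of Lemma \ref{lem:mono} and the positivity of the Carleman--Hankel operator — has already been carried out. The only mild subtlety worth addressing carefully is bookkeeping: making sure that $F$ is $C^1$ (not merely continuous) as a map into $X$, which requires that the Gâteaux derivative computed formally above is actually the Fréchet derivative and depends continuously on $(u,\lambda)$; this is where one would reuse the bilinear-type estimates from the proof of Lemma \ref{lem:T_contract}, now applied to differences of derivatives rather than differences of values. Once that is in place, the conclusion is a direct citation of the implicit function theorem, and the uniform dependence $\eps = \eps(\| v \|_X)$ is read off from its quantitative statement.
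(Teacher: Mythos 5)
Your proposal is correct and follows exactly the route the paper takes: the paper's proof of Proposition \ref{prop:local} is precisely the implicit function theorem applied at $(v,\lambda)$, with the $C^1$-regularity of $F$ supplied by Lemma \ref{lem:F_well} and the invertibility of $\pt_v F = \KK - \mathds{1}$ supplied by Lemma \ref{lem:KK_invert}. Your remark on tracking the quantitative form of the implicit function theorem to obtain $\eps = \eps(\|v\|_X)$ is a reasonable account of the one point the paper leaves implicit.
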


Below, we will see that we can extend every such local branch $t \mapsto v_t$ to all of $t \in (0, \lambda]$ thanks to a-priori bounds.

\subsection{Global Uniqueness and Proof of Theorem \ref{thm:main2}}

First, we notice that we must have global uniqueness of solutions of $F(v, \lambda)=0$ for sufficiently small $\lambda> 0$.

\begin{prop} \label{prop:small_unique}
There exists $0 < \lambda_* \ll 1$ such that the solution $v \in X$ of $F(v,\lambda)=0$ is unique for $0 < \lambda \leq \lambda_*$.
\end{prop}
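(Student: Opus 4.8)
The plan is to show that for small $\lambda>0$ the map $T_\lambda$ is a contraction on a suitable small ball, so that its fixed point — which by \eqref{eq:Tv_equiv} and \eqref{eq:Fvl} is exactly the solution of $F(v,\lambda)=0$ — is unique. The a-priori bound of Lemma \ref{lem:apriori} tells us that any solution $v\in X^*$ of $T_\lambda(v)=v$ satisfies $\|v\|_X\lesssim \lambda^2+\lambda\lesssim\lambda$ for $\lambda\le 1$; hence every solution lies in the ball $B_\lambda:=\{u\in X^*:\|u\|_X\le C_0\lambda\}$ for a fixed constant $C_0$. It therefore suffices to prove that $T_\lambda$ is a strict contraction on $B_\lambda$ once $\lambda$ is small enough.

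First I would invoke Lemma \ref{lem:T_contract}: for $u,v\in B_\lambda$,
\be
\|T_\lambda(v)-T_\lambda(u)\|_X \lesssim \lambda\,(\|u\|_X+\|v\|_X)\,(1+\|u\|_X^2+\|v\|_X^2)\,\|u-v\|_{L^2} \lesssim \lambda\cdot\lambda\cdot 1\cdot\|u-v\|_X,
\ee
so the Lipschitz constant of $T_\lambda$ on $B_\lambda$ is $O(\lambda^2)$. Choosing $0<\lambda_*\ll1$ so that this constant is $\le\tfrac12$ for all $0<\lambda\le\lambda_*$, the map $T_\lambda$ has at most one fixed point in $B_\lambda$. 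Since — by the a-priori bound just recalled — every solution of $F(v,\lambda)=0$ in $X$ must actually be symmetric-decreasing (Proposition \ref{prop:wv_equiv}) and hence lie in $B_\lambda$, uniqueness in $B_\lambda$ upgrades to uniqueness in all of $X$. Combined with the existence result of Proposition \ref{prop:exist}, this gives exactly one solution for each $\lambda\in(0,\lambda_*]$.

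One small point to be careful about: Lemma \ref{lem:T_contract} is stated for $T_\lambda$ on $X^*$, whereas $F$ is defined on all of $X$; but any solution of $F(v,\lambda)=0$ lies in $X^*$ by Proposition \ref{prop:wv_equiv}, so restricting attention to $X^*$ loses nothing, and the a-priori bound of Lemma \ref{lem:apriori} applies verbatim. I expect the only real content is bookkeeping the constants so that "the a-priori ball" and "the contraction ball" can be taken to coincide; the genuine analytic work has already been done in Lemmas \ref{lem:apriori} and \ref{lem:T_contract}. The main (mild) obstacle is simply making sure the quadratic gain in $\lambda$ from Lemma \ref{lem:T_contract} is not spoiled by the size of the ball, which it is not since $\|u\|_X,\|v\|_X\lesssim\lambda$ there.
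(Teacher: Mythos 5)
Your argument is correct and is essentially the paper's proof: both take two solutions, note they are symmetric-decreasing fixed points of $T_\lambda$ by Proposition \ref{prop:wv_equiv}, combine the a-priori bound $\|v\|_X\lesssim\lambda^2+\lambda$ of Lemma \ref{lem:apriori} with the Lipschitz estimate of Lemma \ref{lem:T_contract} to get $\|u-v\|\leq \tfrac12\|u-v\|$ for $\lambda$ small, and conclude $u\equiv v$. The "contraction on a ball" packaging is cosmetic; the underlying estimate and the lemmas invoked are the same.
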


\begin{proof}
Let $u, v \in X$ both solve $F(u,\lambda) = F(v,\lambda)=0$. Since $u,v \in X$ are symmetric-decreasing by Proposition \ref{prop:wv_equiv}, we see that  $u$ and $v$ are fixed points of the map  $T_\lambda : X^* \to X^*$. if we combine the a-priori bounds in Lemma \ref{lem:apriori} with Lipschitz estimates in Lemma \ref{lem:T_contract}, we deduce that
$$
\| u-v \|_{L^2} \lesssim  \lambda ( \lambda^6 + \lambda ) \| u-v \|_{L^2} .
$$
Thus there exists $0 < \lambda_* \ll 1$ sufficiently small such that $\lambda \in (0, \lambda_*]$ implies $\| u - v \|_{L^2} \leq \frac{1}{2} \|u -v \|_{L^2}$ and hence $u \equiv v$.
\end{proof}

We are now ready to prove Theorem \ref{thm:main2} as follows. Let $w, \tilde{w} \in L_{1/2}(\R)$ be two solutions of \eqref{eq:Liouville} with $K e^w, K e^{\tilde{w}} \in L^1(\R)$. Correspondingly, we define the functions $v = \sqrt{K e^w} \in X$ and $\tilde{v} = \sqrt{K e^{\tilde{w}}} \in X$. Suppose now that $w(0) = \tilde{w}(0)$ holds, which implies $v(0)= \tilde{v}(0)$. By the previous discussion, we have $F(v,\lambda)=F(\tilde{v},\lambda)=0$ with $\lambda = v(0)/\sqrt{K(0)} >0$. By Proposition \ref{prop:local} and thanks to the a-priori bounds in Lemma \ref{lem:apriori}, we can construct two branches 
$$
t \mapsto v_t \quad \mbox{and} \quad t \mapsto \tilde{v}_t \quad \mbox{for all $t \in (0, \lambda]$}
$$ 
such that $F(v_t,t)=F(\tilde{v}_t,t)=0$ for $t \in (0, \lambda]$ and $v_{t=\lambda}=v$ and $\tilde{v}_{t=\lambda}=\tilde{v}$. Suppose now $w \neq \tilde{w}$ and hence $v \neq \tilde{v}$. By the local uniqueness property in Proposition \ref{prop:local}, the branches can never intersect, i.\,e., we have $v_t \neq \tilde{v}_t$ for all $t \in (0, \lambda]$. But this contradicts the uniqueness result in Proposition \ref{prop:small_unique} whenever $0 < t \ll 1$ is sufficiently small. Therefore, we conclude that $v = \tilde{v}$ and hence $w=\log(K^{-1} v^2) = \log(K^{-1} \tilde{v}^2) = \tilde{w}$ holds true. 

The proof of Theorem \ref{thm:main2} is now complete. \hfill $\Box$

\begin{appendix}

\section{Some Technical Facts}

\begin{lem} \label{lem:hilbert}
Let $f \in X$ be symmetric-decreasing, i.\,e., we have $f = f^*$. Then its Hilbert transform satisfies
$$
\Hil(f)(x) \geq 0 \quad \mbox{for}  \quad x \geq 0 \quad \mbox{and} \quad \Hil(f)(x) \leq 0 \quad \mbox{for} \quad x \leq 0.
$$
\end{lem}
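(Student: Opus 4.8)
The plan is to work directly with the singular-integral representation of the Hilbert transform rather than via Fourier multipliers. Since $f \in X \subset H^1(\R) \hookrightarrow C^{0,1/2}(\R)$ with $f(x) \to 0$ as $|x|\to\infty$, one first checks that for every $x \in \R$ the principal value
$$
\Hil(f)(x) = \frac{1}{\pi}\, PV\!\int_{\R} \frac{f(y)}{x-y}\,dy
$$
is well defined and agrees with the multiplier definition of $\Hil$: splitting the integral at $|x-y|=1$, the near-diagonal part converges absolutely thanks to the bound $|f(y)-f(x)|\lesssim|y-x|^{1/2}$ (after subtracting $f(x)\,PV\!\int_{|x-y|<1}(x-y)^{-1}dy = 0$), while the far part converges absolutely by Cauchy--Schwarz since $y\mapsto (x-y)^{-1}\mathbf{1}_{|x-y|>1}\in L^2(\R)$. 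Because $f$ is even, $\Hil(f)$ is odd; hence it suffices to prove $\Hil(f)(x)\geq 0$ for $x>0$, the value at $x=0$ vanishing by oddness and the case $x<0$ following by reflection.

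Fix $x>0$. Splitting the integration into $\{y>0\}$ and $\{y<0\}$ and substituting $y\mapsto -y$ in the latter, using $f(-y)=f(y)$, gives
$$
\pi\,\Hil(f)(x) = PV\!\int_0^{\infty} f(y)\left(\frac{1}{x-y}+\frac{1}{x+y}\right)dy = 2x\; PV\!\int_0^{\infty}\frac{f(y)}{x^2-y^2}\,dy .
$$
Next I would record the elementary identity $PV\!\int_0^{\infty}\frac{dy}{x^2-y^2}=0$ for $x>0$, which follows from $\int_0^R\big(\frac{1}{x-y}+\frac{1}{x+y}\big)dy = \log\frac{x+R}{R-x}\to 0$ as $R\to\infty$. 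Subtracting $f(x)$ inside the integral then yields
$$
\pi\,\Hil(f)(x) = 2x\int_0^{\infty}\frac{f(y)-f(x)}{x^2-y^2}\,dy ,
$$
where the right-hand side is now an ordinary improper integral with no singularity: the Hölder bound controls the would-be pole at $y=x$, and $f(y)-f(x)\to -f(x)$ together with $(x^2-y^2)^{-1}=O(y^{-2})$ handles $y\to\infty$.

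The sign is then immediate from monotonicity: for $0<y<x$ we have $x^2-y^2>0$ and, since $f=f^*$ is decreasing in $|x|$, $f(y)\geq f(x)$, so the integrand is $\geq 0$; for $y>x$ we have $x^2-y^2<0$ and $f(y)\leq f(x)$, so the integrand is again $\geq 0$. Hence $\Hil(f)(x)\geq 0$ for $x>0$, and by oddness $\Hil(f)(x)\leq 0$ for $x<0$. I expect the only genuinely delicate point — the ``main obstacle'', though a mild one — to be the rigorous bookkeeping around the principal value: its pointwise existence and identification with $\Hil f$, and the folding and vanishing-identity steps, all of which rest on the $H^1$-regularity and decay of $f$ and should be written out carefully. (An alternative route avoiding principal values uses the layer-cake decomposition $f=\int_0^{\infty}\mathbf{1}_{(-r(s),\,r(s))}\,ds$ with $r(s)$ nonincreasing, together with $\Hil(\mathbf{1}_{(-r,r)})(x)=\tfrac1\pi\log\big|\tfrac{x+r}{x-r}\big|\geq 0$ for $x>0$; there the subtlety moves to justifying the interchange of $\Hil$ with the $s$-integration, which requires a truncation argument since $f$ need not belong to $L^1(\R)$.)
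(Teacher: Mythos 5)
Your proof is correct and follows essentially the same elementary route as the paper: fold the principal-value integral using symmetry and read off the sign from the monotonicity of $f$. The only (cosmetic) difference is that the paper symmetrizes the kernel around the singularity $y=x$, writing $\Hil(f)(x)=\frac{1}{\pi}\lim_{\eps\to0}\int_\eps^{\infty}\frac{f(x-y)-f(x+y)}{y}\,dy$ and concluding from $f(x-y)\ge f(x+y)$ for $x,y\ge 0$, which spares you the subtraction of $f(x)$ and the vanishing identity $PV\!\int_0^\infty\frac{dy}{x^2-y^2}=0$.
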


\begin{proof}
The proof is elementary. Since $f = f^*$ is an even function, we note that its Hilbert transform
$$
\Hil(f)(x) = \frac{1}{\pi} PV \int_{-\infty}^{+\infty} \frac{f(y)}{x-y} \, dy = \frac{1}{\pi} \lim_{\eps \to 0} \int_\eps^{+\infty} \frac{f(x-y)-f(x+y)}{y} \, dy,
$$
is an odd function in $x \in \R$. Thus it suffices to show that $\Hil(f)(x) \geq 0$ for $x \geq 0$.  Since $|x-y| \leq |x+y| = x+y$ for $x,y \geq 0$ and $f=f^*$ is symmetric-decreasing, we see that $f(x-y) \geq f(x+y)$ for all $x,y \geq 0$. Thus the claim directly follows from the integral expression for $\Hil(f)(x)$.
\end{proof}

%\begin{lem} \label{lem:hilbert_hopf}
%Let $f : \R \to \R$ be an odd function with respect to $\{ x=x_0 \}$ and assume that $f(x) \geq 0$ for $x \geq x_0$. If $f \in L^p(\R)$ for some $1 < p < +\infty$ and $f$ is $C^\alpha$ with some $\alpha > 1/2$ in a  neighborhood of $x_0$, then its Hilbert transform at $x_0$ satisfies
%$$
%\Hil(f)(x_0) \leq 0,
%$$ 
%where $\Hil(f)(x_0)=0$ holds if and only if $f \equiv 0$.
%\end{lem}
%
%\begin{proof}
%Without loss of generality we can assume that $x_0=0$. Using the symmetry of $f$, we note that
%$$
%\Hil(f)(0) = \frac{1}{\pi} PV \int_{-\infty}^{+\infty} \frac{f(y)}{0-y} \, dy = -\frac{2}{\pi} \int_0^\infty \frac{f(y)}{y} \,dy, 
%$$
%where the last integral is absolutely convergent due to our assumptions on $f$. From this we readily conclude the proof. 
%\end{proof}

\begin{prop} \label{prop:compact}
The embedding $X \subset \Le(\R)$ is compact.
\end{prop}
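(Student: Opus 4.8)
The plan is to obtain compactness from two ingredients: local compactness provided by the $H^1$-part of the $X$-norm, and tightness at infinity provided by the logarithmic weight. Note first that $X \hookrightarrow \Le(\R)$ continuously, since $\|u\|_{L^2} \le \|u\|_{H^1} \le \|u\|_X$, so the statement is meaningful.

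First I would take a bounded sequence $(u_k)_{k \ge 1}$ in $X$ with $\|u_k\|_X \le M$. Since $X$ is a Hilbert space, after passing to a subsequence we may assume $u_k \weakto u$ weakly in $X$, and in particular $u_k \weakto u$ weakly in $H^1(\R)$; the limit $u$ is again even, and the shifted sequence $u_k - u$ remains bounded in $X$ and converges weakly to $0$. Hence it suffices to show that a bounded sequence $(u_k)$ in $X$ with $u_k \weakto 0$ in $X$ satisfies $u_k \to 0$ strongly in $\Le(\R)$.

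Next I would split the $L^2$-norm at radius $R$. For $|x| \ge R$ we have $\log(1+|x|) \ge \log(1+R)$, so
$$
\int_{|x| \ge R} |u_k(x)|^2 \, dx \le \frac{1}{\log(1+R)} \int_{|x| \ge R} \log(1+|x|)\, |u_k(x)|^2 \, dx \le \frac{M^2}{\log(1+R)},
$$
a bound uniform in $k$ which tends to $0$ as $R \to \infty$. On the ball $B_R$, the Rellich--Kondrachov theorem gives that $H^1(B_R) \hookrightarrow L^2(B_R)$ is compact, so $u_k \to 0$ strongly in $L^2(B_R)$. Given $\eps > 0$, I would first fix $R$ so that $M^2/\log(1+R) < \eps/2$, then choose $k_0$ so that $\int_{B_R} |u_k|^2 \, dx < \eps/2$ for all $k \ge k_0$; adding the two bounds gives $\|u_k\|_{L^2(\R)}^2 < \eps$ for $k \ge k_0$, which is the claim. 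I do not anticipate a genuine obstacle in this argument; the only mild point of care is that weak limits stay in the even subspace and that $\Le(\R)$ is closed in $L^2(\R)$, both of which are immediate.
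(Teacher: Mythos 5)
Your proof is correct and follows essentially the same route as the paper: local Rellich compactness on balls combined with the uniform logarithmic-weight bound to control the tails. The only cosmetic difference is that you first subtract the weak limit, whereas the paper extracts an $L^2_{\mathrm{loc}}$-convergent subsequence directly; both are standard and equivalent here.
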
 

\begin{proof}
Suppose that $(f_n)$ is a bounded sequence in $X$. In particular, the sequence $(f_n)$ is bounded in $H^1(\R)$. Hence, by local Rellich compactness, we can assume that $(f_n)$ converges in $L^2_{\mathrm{loc}}(\R)$ after passing to a subsequence if necessary. Next, by the uniform bound $\int_{\R} \log(1 +|x|) |f_n(x)|^2 \,dx \leq C$ with some constant $C> 0$ independent of $n$, the property $\log(1 +|x|) \to +\infty$ as $|x| \to +\infty$ readily implies that $(f_n)$ strongly converges in $L^2(\R)$. Since $\Le(\R)$ is a closed subspace of $L^2(\R)$, the proof is complete.
\end{proof}

\begin{lem} \label{lem:F_well}
The map $F : X \times (0, \infty) \to X$  given in \eqref{def:Fmap} is well-defined and of class $C^1$.
\end{lem}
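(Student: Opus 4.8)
The plan is to write $F(u,\lambda)=T_\lambda(u)-u$, where $T_\lambda(u)(x)=\lambda\sqrt{K(x)}\,\psi_u(x)$ with $\psi_u(x):=\exp\!\big(-\tfrac12\int_0^x\Hil(u^2)(y)\,dy\big)$, and to establish well-definedness and $C^1$-regularity in turn. Since $u$ trivially belongs to $X$, well-definedness reduces to showing $T_\lambda(u)\in X$ for all $u\in X$ and $\lambda>0$. The point to watch is that, in contrast to Section~4, $u$ need not be symmetric-decreasing, so Lemma~\ref{lem:hilbert} is unavailable and the bound $0<\psi_u\le1$ is lost; the first task is therefore to show that $\psi_u$ is nonetheless bounded. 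To this end I would note that $u\in X$ gives $u^2\in H^1(\R)\cap L^1\!\big(\R,\log(1+|y|)\,dy\big)\subset C^{0,1/2}(\R)$, so the logarithmic potential $G_u(x):=\tfrac1\pi\int_\R u^2(y)\log|x-y|\,dy$ is finite for every $x$ and satisfies $G_u'=\Hil(u^2)$ (the differentiation under the integral sign being justified by the H\"older regularity of $u^2$, exactly as in the derivation of the integral representation in Lemma~\ref{lem:w_int_log}). Hence $\int_0^x\Hil(u^2)(y)\,dy=G_u(x)-G_u(0)=\tfrac1\pi\int_\R u^2(y)\log\big|\tfrac{x-y}{y}\big|\,dy$. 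Splitting the last integral at $|x-y|=1$, the region $|x-y|>1$ contributes a nonnegative amount, the region $|x-y|\le1$ contributes at least $\tfrac1\pi\|u\|_{L^\infty}^2\int_{-1}^{1}\log|t|\,dt=-\tfrac2\pi\|u\|_{L^\infty}^2$, and $G_u(0)=\tfrac1\pi\int_\R u^2(y)\log|y|\,dy$ is a finite constant bounded in terms of $\|u\|_X$; therefore $\int_0^x\Hil(u^2)\ge -C(\|u\|_X)$ uniformly in $x$, so $\psi_u\in L^\infty(\R)$ with $\|\psi_u\|_{L^\infty}$ controlled by $\|u\|_X$.

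With the boundedness of $\psi_u$ in hand, well-definedness is routine: $T_\lambda(u)=\lambda\sqrt K\,\psi_u$ is even, lies in $L^2(\R)$ since $\sqrt K\in L^2$ (Lemma~\ref{lem:K_prop}); $\pt_xT_\lambda(u)=\lambda\big((\pt_x\sqrt K)\psi_u-\tfrac12\sqrt K\,\psi_u\,\Hil(u^2)\big)\in L^2(\R)$ using $\pt_x\sqrt K\in L^2$, $\sqrt K\in L^\infty$ and $\Hil(u^2)\in H^1(\R)\subset L^\infty(\R)$; and $\int_\R\log(1+|x|)\,|T_\lambda(u)|^2\,dx\lesssim\|\psi_u\|_{L^\infty}^2\int_\R\log(1+|x|)\,K\,dx<\infty$ by Assumption~\textbf{(A)}; hence $F(u,\lambda)\in X$. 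For $C^1$-regularity I would compute the candidate partial Fr\'echet derivatives $\pt_\lambda F(u,\lambda)=\sqrt K\,\psi_u\in X$ and, for $\phi\in X$, $\pt_uF(u,\lambda)[\phi](x)=-\lambda\sqrt{K(x)}\,\psi_u(x)\int_0^x\Hil(u\phi)(y)\,dy-\phi(x)$, which at a solution $F(v,\lambda)=0$ reduces to $\KK-\mathds{1}$ of the preceding subsection. That $\pt_uF(u,\lambda)$ is a bounded operator on $X$ follows exactly as in Lemma~\ref{lem:KK_compact}: H\"older's inequality and the $L^p$-boundedness of $\Hil$ for $p$ close to $1$ give $\big|\int_0^x\Hil(u\phi)\big|\le C|x|^{1/q}\|u\|_X\|\phi\|_{L^2}$ with $1/q$ as small as desired, and since $|x|^{1/q}\sqrt K$ and $\sqrt{\log(1+|\cdot|)}\,|x|^{1/q}\sqrt K$ lie in $L^2(\R)$ by Assumption~\textbf{(A)} while $\psi_u\in L^\infty$, this yields $\|\pt_uF(u,\lambda)[\phi]\|_X\le C(\|u\|_X,\lambda)\|\phi\|_X$. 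I would then verify that the first-order Taylor remainder $F(u+\phi,\lambda)-F(u,\lambda)-\pt_uF(u,\lambda)[\phi]$ is $o(\|\phi\|_X)$, and likewise treat differentiability in $\lambda$, by Taylor-expanding the exponential and running the estimates of Lemma~\ref{lem:T_contract} — now on all of $X$, with $\|\psi_\bullet\|_{L^\infty}$ in place of the bound $1$. Finally, joint continuity of $(u,\lambda)\mapsto(\pt_uF(u,\lambda),\pt_\lambda F(u,\lambda))$ follows from dominated convergence (using $\psi_{u_k}\to\psi_u$ pointwise together with the uniform decay of $\sqrt K$ from Assumption~\textbf{(A)}), whence $F\in C^1$ by the standard criterion that existence and continuity of the partial Fr\'echet derivatives imply continuous differentiability.

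The main obstacle is the single point flagged in the first paragraph: controlling $\psi_u$ for a general $u\in X$ that need not be symmetric-decreasing, so that the sign information of Lemma~\ref{lem:hilbert} is no longer available. The device above — passing to the logarithmic-potential representation of $\int_0^x\Hil(u^2)$, which makes sense precisely because $u^2$ is integrable against $\log(1+|y|)$, and reading off a uniform lower bound from the elementary estimate $\int_{-1}^{1}\log|t|\,dt=-2$ — is exactly what replaces it. Once $\psi_u\in L^\infty(\R)$ is secured, every remaining estimate is a minor variant of ones already proved in Lemmas~\ref{lem:T_contract} and~\ref{lem:KK_compact} and in Section~4, so I anticipate no further difficulty.
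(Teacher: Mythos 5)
Your proposal is correct and follows essentially the same route as the paper: the crux in both is the uniform upper bound on $\psi_u=e^{\frac12 h_u}$ for general (not symmetric-decreasing) $u\in X$, obtained from the logarithmic-potential representation of $\int_0^x\Hil(u^2)$ together with the finiteness of $\int u^2\log(1+|y|)$, which is exactly the paper's estimate \eqref{ineq:h_u} (you bound the near-diagonal part by $\|u\|_{L^\infty}^2\int_{-1}^1|\log t|\,dt$ where the paper uses Cauchy--Schwarz on $B_1(x)$ --- an immaterial difference). The remaining well-definedness and $C^1$ estimates you give are the same as, and in the $C^1$ part slightly more detailed than, what the paper records.
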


\begin{proof}
Let $(u,\lambda) \in X \times (0,\infty)$ be given. First we show that $F(u,\lambda) \in X$ as follows. We set
$$
h_u(x) := -\int_0^x \Hil(u^2)(y) \, dy.
$$
We claim that 
\be \label{ineq:h_u}
h_u(x) \leq A \| u \|_{X}^2 \quad \mbox{for all $x \in \R$},
\ee
with some universal constant $A > 0$. Indeed, using that $\Hil \pt_x = (-\Delta)^{1/2}$, we note that $(-\Delta)^{1/2} h_u = u^2$ in $\R$. By adapting the arguments in the proof of Lemma \ref{lem:w_int_log} and using that $\log (1 + |\cdot|) |u|^2 \in L^1(\R)$, we deduce that
$$
h_u(x) = -\frac{1}{\pi} \int_{\R} \log |x-y| u(y)^2 \, dy + C_0
$$
with some constant $C_0 \in \R$. Since $h_u(0)=0$, we find the upper bound
$$
C_0 = \frac{1}{\pi} \int_{\R} \left ( \log |y| \right ) u(y)^2 \, dy \leq \frac{1}{\pi} \int_\R \log (1+|y|) u(y)^2 \,dy \leq \frac{1}{\pi} \| u \|_{X}^2.
$$
Furthermore, we estimate
\begin{align*}
-\frac{1}{\pi} \int_\R \log |x-y| u(y)^2 \, dy & \leq -\frac{1}{\pi} \int_{B_1(x)} \log |x-y| u(y)^2 \, dy  \\ 
& \leq \frac{1}{\pi} \left \| \log |x-\cdot | \right \|_{L^2(B_1(x))} \| u^2 \|_{L^2} \leq B \| u \|_{X}^2
\end{align*}
with some constant $B>0$. This completes the proof of \eqref{ineq:h_u}. 

Using \eqref{ineq:h_u}, we see that $e^{h_u} \in L^\infty(\R)$ and thus, by our assumptions on $K$, we deduce that $F(u,\lambda) = \lambda \sqrt{K} e^{\frac{1}{2} h_u} - u$ satisfies
$$
\int_{\R} |F(u,\lambda)(x)|^2 + \int_{\R} \log(1+|x|) |F(u,\lambda)(x)|^2 \, dx < +\infty.
$$
Similarly, we show that $\pt_x F(u,\lambda) \in L^2(\R)$. Finally, it is easy to see that $F(u,\lambda)(-x) = F(u,\lambda)(x)$ using that $u(x) = u(-x)$ holds for $u \in X$. This proves that $F(u,\lambda) \in X$.

The fact that the map $F:X \times (0,\infty) \to X$ is continuous follows by using dominated convergence together with previous bounds, standard estimates, and the fact that $u_n \to u$ in $X$ implies that $h_{u_n} \to h_u$ pointwise almost everywhere. Furthermore, it is straightforward to verify that $F$ is of class $C^1$. We omit the details.
\end{proof}

\section{Relation to the Monotonicity Formula} 
\label{sec:mono}

In this section, we explain how the arguments in the proof of the key Lemma \ref{lem:mono} can be seen from the perspective of \textbf{monotonicity formulas} for  the fractional Laplacian $(-\Delta)^s$ in \cite{CaSo-05,CaSi-14, FrLeSi-16}.

To simplify the presentation and to focus on the main ideas, we will work on a purely calculational level and we thus omit any technicalities. Furthermore, we will exclusively consider $n=1$ space dimension and $(-\Delta)^s$ with $s=1/2$ (which is the relevant case in this paper). Suppose that $u : \R \to \R$ solves
\be \label{eq:u_mono}
(-\Delta)^{1/2} u + V u = 0 \quad \mbox{in $\R$},
\ee
where $V: \R \to \R$ is a given potential of class $C^1$, say. Following \cite{FrLeSi-16}  we introduce the function $H$ on $\R$ defined as
\be 
H(x) = \frac{1}{2} \int_0^\infty \left \{ u_x(x,t)^2 - u_t(x,t)^2 \right \} dt - \frac{1}{2} V(x) u(x)^2.
\ee
Here $u(x,t)$ denotes the harmonic extension of $u(x)$ to the upper half-plane $\R_+^2 = \R \times \{ t > 0 \}$. Using the classical fact $-\pt_t u(0,x)  = (-\Delta)^{1/2} u(x)$ and \eqref{eq:u_mono}, a calculation yields that
\begin{equation} \label{eq:mono_H}
H'(x) = -\frac{1}{2} V'(x) u(x)^2 .
\end{equation}
Thus if $V$ is monotone increasing, we see that $H'(x) \leq 0$ for $x \geq 0$, showing that $H$ is a monotone decreasing quantity on the half-line $[0,\infty)$. See \cite{FrLeSi-16} for applications to show uniqueness results.

To make a link with $H(x)$ to the arguments in the proof of Lemma \ref{lem:mono}, let us consider the expression 
\be
\tilde{H}(x) = \int_x^\infty ((-\Delta)^{1/2} u(y)) u_x(y) \, dy  + \int_{x}^\infty V(y) u(y) u_x(y) \, dy  .
\ee
Note that we have $\tilde{H}(x) \equiv 0$, since $(-\Delta)^{1/2} u + V u = 0$ holds. But by separately analyzing the two integrals above, we find the following relation to $H(x)$. First, we claim
\be  \label{eq:Gauss}
\int_x^{\infty} ((-\Delta)^{1/2} u(y)) u_x(y) \, dy = \frac{1}{2} \int_0^\infty  \left \{ u_x(x,t)^2 - u_t(x,t)^2 \right \} dt 
\ee 
for any $x \in \R$. To see this, we introduce the vector field $\mathbf{F} : \R_+^2 \to \R^2$ with
$$
\mathbf{F}(x,t) =  \left ( \begin{array}{c}  {-u_t} u_x \\ \frac{1}{2} ( u_x^2 - u_t^2) \end{array} \right ),
$$
where $u=u(x,t)$ denotes the harmonic extension $u$ to $\R^2_+$.  In view of $u_{xx}+u_{tt}=0$ in $\R^2_+$, we readily check that $\mathbf{F}: \R^2_+ \to \R^2$ is curl-free, i.\,e.,
$$
\mathrm{curl} \, \mathbf{F} = \pt_x F_2 - \pt_t F_1 = 0.
$$
By Stokes' theorem and assuming sufficient decay of $\mathbf{F}$ at infinity, we can deduce 
$$
0 = \int \! \! \int_{D} \mathrm{curl} \, \mathbf{F} \, dx \, dt = \oint_{\pt D} \mathbf{F} \cdot d \mathbf{s} = \int_x^\infty F_1(y,0) \, dy - \int_0^\infty F_2(x,t) \, dt 
$$
with the region $D= [x,\infty) \times [0,\infty) \subset \overline{\R^2_+}$. Because of $F_1 \big |_{t=0} = ((-\Delta)^{1/2}u) u_x$, we see that \eqref{eq:Gauss} holds. On the other hand, by integration by parts and assuming that $V u^2$ vanishes at infinity, we immediately find
$$
\int_x^{\infty} V(y) u(y) u_x(y) \, dy = -\frac{1}{2} V(x) u(x)^2 - \frac{1}{2} \int_x^\infty V'(y) u(y)^2 \, dy
$$
In summary, we have derived that following identity
\be
\tilde{H}(x) = H(x) - \frac{1}{2} \int_x^\infty V'(y) u(y)^2 \, dy 
\ee
relating $\tilde{H}$ and $H$. In view of $\tilde{H}(x) \equiv 0$, we obtain \eqref{eq:mono_H} as a direct consequence. 

\end{appendix}

\end{document}